\theoremstyle{plain}
 \newtheorem{theorem}{Theorem}
 \newtheorem{proposition}[theorem]{Proposition}
 \newtheorem{lemma}[theorem]{Lemma}
 \newtheorem{open problem}[theorem]{Open problem}
 \newtheorem{proposed project}[theorem]{Proposed project}
\theoremstyle{definition}
 \newtheorem{definition}[theorem]{Definition}
\theoremstyle{remark}
 \newtheorem{remark}[theorem]{Remark}
 \numberwithin{equation}{section}
\title{K\"{a}hler cone-manifolds arising from a projective arrangement}
\author{Dali Shen}
\date{\today}
\email{dali\_shen@hotmail.com}
\begin{document}

\setcounter{page}{1}
\pagenumbering{arabic}

\maketitle

\begin{abstract}
Given a hyperplane arrangement of some type in a projective space, the Dunkl system, 
developed by Couwenberg, Heckman and Looijenga, is used to study the geometric structures on its 
complement, and as a consequence it leads to the discovery of new ball quotients when the Schwarz 
conditions are imposed. In this paper, we study how the space, investigated in this system, looks like when there is no Schwarz conditions 
imposed. As a result, we show that the space in question is still of a particular type of structure, namely, the structure 
of a cone-manifold. With this point of view, Thurston's work on moduli of cone metrics on the sphere appears as 
a special case in this set-up.
\end{abstract}

\tableofcontents

\section{Introduction}

The geometric structures on the complement of a given projective arrangement can be 
studied within the framework of the Dunkl system, which was established 
by Couwenberg, Heckman and Looijenga in \cite{Couwenberg-Heckman-Looijenga}. 
A principal consequence of studying the system is the discovery of a few new ball quotients when the Schwarz 
conditions are imposed. That work generalizes the perspective of the work of Barthel-Hirzebruch-H\"{o}fer 
\cite{Barthel-Hirzebruch-Hofer}, which 
investigates the Galois covering of $\mathbb{P}^{2}$ ramified (in a specified manner) over a configuration of lines, 
to a higher dimensional situation so that the Deligne-Mostow theory on Lauricella hypergeometric functions appears as a special case 
in this setting. 

One may wonder, however, what the space, investigated in this system, looks like if the (rather restrictive) Schwarz conditions 
are not imposed. 
This paper is to address this problem and the space in question turns out to be still of some particular type of structures which 
can be understood to some extent. 
Namely, they are actually cone-manifolds. From this point of view, Thurston's work on moduli of cone metrics on 
$\mathbb{P}^{1}$ \cite{Thurston} (another interpretation of the Deligne-Mostow theory) appears as a special case in this setting.

We start out with a quite simple package of data: a finite dimensional complex vector space $V$ with an inner product and a finite 
collection of (linear) hyperplanes in it (called a \emph{hyperplane arrangement}). We write $V^{\circ}$ for the complement in $V$ of the 
union of these hyperplanes. Then we can endow a connection of a specified type, imitating Dunkl's construction for the reflecting 
hyperplane arrangement of a finite Coxeter group, on the tangent bundle of $V^{\circ}$ such that if we regard the connection as a 
meromorphic connection on the tangent bundle of the projective completion of $V$, then it has a logarithmic singularity along each 
member of the arrangement.

The connection is automatically $\mathbb{C}^{\times}$-invariant and torsion free. Moreover if the connection is flat, 
which nevertheless imposes strong conditions on the arrangement, then the connection defines an affine structure on $V^{\circ}$ 
and hence a new projective structure (or affine structure with occasional cases, see Section $\ref{sec:Dunkl-system}$ 
for the precise distinctions) on its projectivization $\mathbb{P}(V^{\circ})$ with an infinitesimally simple degeneration 
along each member of the descended projective arrangement. We call such a system a \emph{Dunkl system}. 
We note that there do exist plenty of such systems, although it is not always the case 
for an arbitrary hyperplane arrangement.

In the meantime, in many cases there also exists a flat admissible Hermitian form on $V^{\circ}$ such that $\mathbb{P}(V^{\circ})$ inherits 
a Hermitian form which is positive definite. When the Hermitian form on $V^{\circ}$ is positive definite, positive semidefinite and 
of hyperbolic signature, it endows $\mathbb{P}(V^{\circ})$ with a K\"{a}hler metric locally isometric to a Fubini-Study metric, 
a flat metric and a hyperbolic metric respectively. We refer to these three situations as \emph{elliptic}, \emph{parabolic} and 
\emph{hyperbolic} case respectively. There may also exist a finite group such that the concerned Dunkl system is 
invariant under the action of this group. We denote this symmetry group by $G$. 
It is proved in \cite{Couwenberg-Heckman-Looijenga} that the space 
$\mathbb{P}(G\backslash V^{\circ})$ can be metrically completed to an orbifold of elliptic, parabolic and hyperbolic type respectively 
if the so-called Schwarz conditions are satisfied.

In this paper we study that in general, without the restriction of the Schwarz conditions, how the space 
$\mathbb{P}(G\backslash V^{\circ})$ looks like. As we already mentioned, 
it turns out that the space in question still acquires the structure of a particular type 
which is, to some extent, understandable. For the precise statements of the main result we refer to Theorems $\ref{thm:elliptic}$, 
$\ref{thm:parabolic}$ and $\ref{thm:hyperbolic}$ for the elliptic, parabolic and hyperbolic case respectively. 
It suffices to say here that the metric completion of $\mathbb{P}(G\backslash V^{\circ})$ acquires the structure of 
a cone-manifold of elliptic, parabolic and hyperbolic type for each case respectively.

The proof relies on a careful analysis on the new projective (or affine) structure (induced by the connection) around 
those (blow-ups of) intersections taken 
by the members of the arrangement, and hence we can see how the metric degenerates from the 
smooth K\"{a}hler metric endowed on the complement of the arrangement. 
It turns out that around those strata in the metric completion of $\mathbb{P}(G\backslash V^{\circ})$, 
obtained by a blow-up followed by a blow-down process, the metric degenerates as of a cone type which can be seen by an inductive manner. 

In addition, we also want to mention that given the modular interpretation for the space associated with some arrangement, 
these cone-manifold structures potentially 
provide a differential-geometric interpretation for the Hassett-Keel-Looijenga program for some moduli spaces.

In fact, on the other hand, in the affine level, if the arrangement complement $V^{\circ}$, whose affine structure is given by 
the flat torsion free connection of the above type, is endowed with a flat K\"{a}hler metric, 
then its metric completion is $V$ and the metric is completed to a polyhedral K\"{a}hler metric on $V$ 
(closely related to the parabolic cone-manifold in our terminology). 
This was done by de Borbon and Panov \cite{DeBorbon-Panov}.
They can make it even more general 
by lifting the self-adjointness condition in the Dunkl system so that angles larger than $2\pi$ (in the parabolic case) 
can be allowed.

The paper is organized as follows. In Section $\ref{sec:cone-manifolds}$ we give a brief introduction to cone-manifolds. 
In Section $\ref{sec:Dunkl-system}$ we set up the Dunkl system. In Section $\ref{sec:cone-manifolds-structres}$ 
we show that the space in question $\mathbb{P}(G\backslash V^{\circ})$ can be metrically completed to a cone-manifold 
for elliptic, parabolic and hyperbolic case respectively, and we also discuss some quantities independent of the type. 
In the last section, i.e., Section $\ref{sec:examples}$, we revisit the Deligne-Mostow theory on Lauricella hypergeometric 
functions (or equivalently, Thurston's work on moduli of cone metrics on $\mathbb{P}^{1}$) with the angle presented in this paper.

\vspace{5mm}
\textbf{Acknowledgements.} I am grateful to Dmitri Panov and Martin de Borbon for the helpful discussions, especially for 
pointing out a gap in the first version of this paper.

\section{Preliminaries on cone-manifolds}\label{sec:cone-manifolds}

In this section we give a quick introduction to cone-manifolds. Here we still follow Thurston's definition on 
$(X,G)-$cone-manifolds \cite{Thurston} since it suffices to describe our cases and what's more, it illustrates 
the uniform local situation (i.e., the pair $(X,G)$) for our spaces. 
A generalized definition on Riemannian cone-manifolds (locally isometric to a $(X,G)-$cone-manifold while the pair $(X,G)$ 
can vary from point to point) can be found in \cite{McMullen}.

Roughly speaking, a cone-manifold is a manifold endowed with a particular kind of singular Riemannian metric. 
In order to see that, let us first recall 
what a spherical complex is, which provides a model for the unit tangent sphere at the singular point on a cone-manifold. 
A \emph{cell complex} is a Hausdorff topological space $M$ together with a partition into disjoint open cells 
$M_{\alpha}$ (of varying dimensions) that satisfies the following two properties:
\begin{enumerate}
	\item For each $n$-dimensional cell, its closure is a homeomorphism onto the $n$-dimensional closed ball in $\mathbb{R}^{n}$, with 
	the cell itself homeomorphic to the interior of the closed ball.
	
	\item Whenever $M_{\alpha}\cap \overline{M_{\beta}}\neq\emptyset$ we have $M_{\alpha}\subset \overline{M_{\beta}}$.
\end{enumerate}

Let $S^{n}$ denote the standard unit sphere in $\mathbb{R}^{n+1}$, a \emph{convex spherical polyhedron} is defined as 
a set $K\subset S^{n}$ obtained by intersecting finitely many hemispheres. 
Then a \emph{spherical complex} is a cell complex glued together by spherical polyhedra in such a way that 
the inclusions $M_{\alpha}\subset\overline{M_{\beta}}$ are isometries. We require that each face of a cell is still a cell. 
We note that every cell in a spherical complex is \emph{totally geodesic}, by which we mean its second fundamental form 
vanishes under every inclusion $M_{\alpha}\subset\overline{M_{\beta}}$.

Given two pointed metric spaces $(X,x_{0})$ and $(Y,y_{0})$, the \emph{pointed Gromov-Hausdorff distance} between these two spaces 
is defined as 
\[
d_{GH}(X,x_{0};Y,y_{0}):=\inf_{(Z,\varphi,\psi)}(d(\varphi(X),\psi(Y))+d(\varphi(x_{0}),\psi(y_{0})))
\]
where $Z$ is a metric space with the metric denoted by $d$, and $\varphi:X\rightarrow Z$, $\psi:Y\rightarrow Z$ are isometric embeddings. 
Let $B_{X}(x,r)$ denote the ball of center $x$ with radius $r$ in the metric space $X$. Then a sequence $\{(X_{n},x_{n})\}$ 
is said to converge to $(X,x)$ in the pointed Gromov-Hausdorff sense if for every $r,\epsilon >0$ there exists an $m\in \mathbb{N}$ 
such that for any $n>m$ the following holds
\[
d_{GH}(B_{X}(x,r);B_{X_{n}}(x_{n},r))<\epsilon.
\]
A \emph{tangent cone} of a metric space $X$ at a point $p\in X$ is then a pointed metric space $(T,o)$ such that 
there exists a sequence of scale factors $\lambda_{n}\rightarrow \infty$ such that 
\[
(\lambda_{n}X,p)\rightarrow (T,o)
\]
in the pointed Gromov-Hausdorff sense as $n\rightarrow \infty$. We note that in general a tangent cone does not always 
exist; and even if it does, it may not be unique. But for our situations, i.e., the underlying space being a manifold 
and endowed with a continuous Riemannian metric, 
the tangent cone does exist (homeomorphic to a vector space) and is unique (e.g., see Proposition 3.15 of \cite{Gromov}).

Now let $X$ be a complete connected $n$-dimensional 
Riemannian manifold and $G$ be a group of isometries of $X$. Then an \emph{$(X,G)-$manifold} is a space $M$ equipped with an atlas of 
charts with homeomorphisms into $X$ such that the transition maps lie in $G$. Let $G_{p}$ denote the stabilizer of $p$ and 
$X_{p}$ the unit tangent sphere of $T_{p}X$. Then $(X_{p},G_{p})$ is a model space of dimension one lower. Following \cite{Thurston}, 
an \emph{$(X,G)-$cone-manifold} can be defined inductively by dimension as follows. 
\begin{definition}
If $\dim X=1$, an $(X,G)-$cone-manifold is nothing but 
an $(X,G)-$manifold. If $\dim X>1$, an $(X,G)-$cone-manifold is a space $M$ such that each point $x\in M$ has a neighborhood 
modelled, using the geometry of $X$, on the cone over a compact and connected $(X_{p},G_{p})-$cone-manifold $S_{x}M$.
We usually call the set of unit tangent vectors at a given point the \emph{unit tangent cone}, 
so that we note that in this sense $S_{x}M$ is just the unit tangent cone to $M$ at $x$.
\end{definition}

According to this, a \emph{spherical cone-manifold} is an $(S^{n},\mathrm{O}(n+1))-$cone-manifold. 
Let $M$ and $N$ be two compact spherical 
cone-manifolds. Assuming that each of them is connected unless it is isomorphic to $S^{0}$. We can define the \emph{join} 
of $M$ and $N$, denoted by $M*N$, as a spherical cone-manifold obtained from the disjoint union of $M$ and $N$ by adding 
a spherical arc $[x,y]$ of length $\pi/2$ between every pair of points $x\in M$ and $y\in N$. For instance, $S^{m}*S^{n}
\cong S^{m+n+1}$. The unit tangent cones of the join are given as follows
\begin{equation*}
S_{z}(M*N)\cong
\left\{
\begin{aligned}
&(S_{z}M)*N  && \text{if $z\in M$,} \\
&M*(S_{z}N)  && \text{if $z\in N$, and} \\
&S^{0}*(S_{x}M)*(S_{y}N)  && \text{if $z$ lies in the open arc $(x,y)$}.
\end{aligned}
\right.
\end{equation*}

We say that $M$ is \emph{prime} if it can not be expressed by a join $M\cong S^{0}\ast N$. Then we have the following 
factorization theorem.
\begin{theorem}[{\cite[Theorem 5.1]{McMullen}}]
A compact spherical cone-manifold $M$ can be canonically expressed as the join $M\cong S^{k}* N$ of a sphere 
and a prime cone-manifold $N$.
\end{theorem}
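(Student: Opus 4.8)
The plan is to prove the factorization by induction on $\dim M$, peeling off one spherical factor at a time and using the join description of unit tangent cones quoted above to keep the induction going. The base cases $\dim M=0$ (where $M$ is a point, $S^{0}$, or a pair of arcs, and the statement is trivial with $k=-1$ or $k=0$) and $\dim M=1$ (a spherical $1$-cone-manifold is a circle or an interval, handled directly) are checked by hand. For the inductive step, I would first ask whether $M$ is prime. If it is, then $M\cong S^{-1}*M$ (the empty join) does the job with $k=-1$, and there is nothing to prove; so assume $M\cong S^{0}*N_{1}$ for some compact spherical cone-manifold $N_{1}$ of dimension $\dim M-1$. By the induction hypothesis $N_{1}\cong S^{k_{1}}*N$ with $N$ prime, and since the join is associative on compact spherical cone-manifolds, $M\cong S^{0}*S^{k_{1}}*N\cong S^{k_{1}+1}*N$. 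This produces the existence of a decomposition $M\cong S^{k}*N$ with $N$ prime; the only real content beyond bookkeeping is (i) associativity of $*$, which follows from the definition by inserting arcs of length $\pi/2$ and checking the metric is well-defined, and (ii) that $S^{0}*S^{k_{1}}\cong S^{k_{1}+1}$, which is the instance $S^{0}*S^{n}\cong S^{n+1}$ noted in the text.

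The substantive part is \emph{canonicity}: I must show the prime factor $N$ and the integer $k$ are determined by $M$, independent of the order in which spheres were split off. For this I would characterize $k$ intrinsically as the largest integer such that $M\cong S^{k}*N'$ for \emph{some} compact spherical cone-manifold $N'$ — equivalently, $k+1$ is the maximal number of times one can iterate splitting off an $S^{0}$. To see this maximum is attained uniquely and that the residual $N$ is independent of choices, the key tool is the unit-tangent-cone formula for joins: at a point $z$ in the interior of one of the added arcs of $S^{k}*N'$, one gets $S_{z}M\cong S^{0}*(S_{x}S^{k})*(S_{y}N')\cong S^{k-1}*(\text{something})*$(cone over $N'$), which forces any splitting to respect the $S^{k}$ factor up to isometry. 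Concretely, I would argue that the set of points of $M$ whose unit tangent cone fails to itself split off an $S^{0}$ — call it the \emph{prime locus} — is a totally geodesic subcomplex isometric to $N$, and that the "spherical directions" normal to it assemble into the canonical $S^{k}$; uniqueness of $N$ then follows because this prime locus is defined purely in terms of the metric on $M$.

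The main obstacle I anticipate is precisely pinning down this uniqueness rigorously: one needs that if $S^{k}*N\cong S^{k}*N'$ with $N,N'$ both prime, then $N\cong N'$ — a cancellation property for spherical joins — and that a prime cone-manifold cannot acquire a spherical factor "by accident" after joining with a sphere. I would handle cancellation by a dimension/volume count together with the tangent-cone analysis: the join $S^{k}*N$ has a distinguished subset (the copy of $S^{k}$, or the copy of $N$) characterized metrically as, respectively, the locus of points $z$ with $S_{z}M\cong S^{k-1}*(\text{cone over }N)$ and its complement's "core"; matching these across the two decompositions gives an isometry $N\cong N'$. A secondary technical point is checking that the inductively produced $N$ really is prime and not merely of one lower dimension — but this is immediate since the induction hypothesis supplies $N$ already prime. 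I would also need the join and the "cone over" constructions to interact correctly with compactness and connectedness hypotheses (the convention that factors are connected unless isomorphic to $S^{0}$), which the text has already set up; with that in place, the argument above assembles into a full proof.
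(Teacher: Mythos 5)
A preliminary remark: the paper does not prove this statement at all --- it is quoted from McMullen \cite{McMullen} as a known black box, so there is no in-paper argument to compare yours against, and I can only judge the proposal on its own terms. The existence half (peel off $S^{0}$ factors by induction on dimension, then use associativity of $*$ and $S^{0}*S^{k_{1}}\cong S^{k_{1}+1}$) is sound. The genuine gap is in the canonicity half, exactly where you anticipate trouble, and the fix you sketch does not work. You propose to characterize the prime factor $N$ as the ``prime locus'': the set of points whose unit tangent cone fails to split off an $S^{0}$. Run this against the join formulas quoted in the paper for $M\cong S^{k}*N$: one gets $S_{x}M\cong S^{k-1}*N$ for $x\in S^{k}$, $S_{y}M\cong S^{k}*S_{y}N$ for $y\in N$, and $S_{z}M\cong S^{0}*(S_{x}S^{k})*(S_{y}N)$ on the open arcs. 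For $k\geq 1$ every one of these splits off an $S^{0}$, so your prime locus is empty; for $k=0$ it equals the two points of $S^{0}$ rather than $N$. A concrete example showing that no pointwise tangent-cone test can isolate $N$: take $M=S^{1}*S^{1}_{\ell}$ with $\ell\neq 2\pi$, i.e. $S^{3}$ with cone angle $\ell$ along a great circle; here the prime factor $N=S^{1}_{\ell}$ consists entirely of \emph{smooth} points of $M$, while the singular set is the sphere factor $S^{1}$. With this characterization gone, neither the well-definedness of $k$ (greedy peeling of $S^{0}$'s could a priori terminate at different depths along different routes) nor the cancellation $S^{k}*N\cong S^{k}*N'\Rightarrow N\cong N'$ is established, and ``a dimension/volume count'' is not a substitute for an argument.

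The natural way to close the gap is to work globally with great subspheres rather than pointwise with tangent cones: call $S\subset M$ a great subsphere if $M\cong S*N'$ for some $N'$, prove that any two great subspheres are contained in a common larger one (this is the real lemma), and conclude that there is a unique maximal great subsphere $S^{k}$; the prime factor is then recovered canonically as the set of points at distance $\pi/2$ from all of $S^{k}$. Maximality forces $N$ to be prime and yields uniqueness of both $k$ and $N$ in one stroke, with your induction supplying only the easy finiteness needed for a maximal great subsphere to exist.
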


Given an $(X,G)-$cone-manifold $M$ of dimension $n$. 
Locally, there is an exponential map from a small ball $B^{n}$ in $T_{p}X$ to the neighborhood $U$ of $x$ in $M$
\[
\exp: B^{n}\rightarrow M.
\]
By pulling back the metric on $M$ the small ball $B^{n}$ is endowed with a metric, denoted by $g$. Then the metric cone 
$(C_{g}(J),0)$ over $J$, where $J\cong S_{x}M$, 
provides an isometric local model for $(U,x)$. Indeed, it can be formed by 
assembling those polyhedral cones over the cells of $S_{x}M$.

Its $k$-dimensional strata are the connected components of the 
locus characterized by 
\[
M[k]=\{x\in M\mid S_{x}M\cong S^{k-1}*N, \text{with $N$ prime}\}.
\]
We usually call the set of unit normal vectors to a stratum at a given point $x$ the \emph{normal cone}, which will be denoted by 
$N_{x}M$ in what follows. In this sense we note that the prime factor $N$ above is just the normal cone to $M[k]$ at $x$, 
and thus we can rewrite the above decomposition as 
\[
S_{x}M\cong S^{k-1}*N_{x}M
\]
for all $x\in M[k]$.

We have the local model $(C_{g}(S_{x}M),0)$ for a neighborhood $U$ of $x\in M[k]$, it is not hard to see that 
\[
M[k]\cap U\cong C_{g}(S^{k-1}).
\]
This also shows that the geodesic ray from $x$ to any nearby point $y\in M[k]$ entirely lies in $M[k]$ so that 
$M[k]$ is totally geodesic in $M$. 
It is clear that the loci $M[k]$ ($0\leq k\leq n$) gives rise to a stratification for $M$ for which the top-dimensional 
stratum $M[n]$ is open and dense in $M$. Note that $M[n-1]$ is empty, and thus $M$ is the metric completion of $M[n]$.

Now let us write the cone metric in terms of coordinates, at least locally. We take the flat metric with a cone singularity along 
a real codimension 2 stratum for example.  
Write $\mathbb{R}^{n}=\mathbb{R}^{2}\times\mathbb{R}^{n-2}$ 
and let $D$ denote the codimension 2 submanifold $\{0\}\times\mathbb{R}^{n-2}$. Take polar coordinates $r,\theta$ on $\mathbb{R}^{2}$ 
and standard coordinates $u_{i}$ on $\mathbb{R}^{n-2}$. Fix $\beta\in (0,1)$ and the following singular metric 
\[
g=dr^{2}+\beta^{2}r^{2}d\theta^{2}+\sum du_{i}^{2}
\]
is called the \emph{standard cone metric} with cone angle $2\pi\beta$ along $D$.

If we take a complex point of view, let $\mathbb{C}_{\beta}$ denote the complex plane $\mathbb{C}$ endowed with the metric 
$\beta^{2}|\zeta|^{2(\beta-1)}|d\zeta|^{2}$ and let $n=2m$, then there is an isometry from the above singular metric 
on $\mathbb{R}^{n}$ to the Riemannian product $\mathbb{C}_{\beta}\times\mathbb{C}^{m-1}$, realized by taking the coordinate 
$\zeta=r^{\frac{1}{\beta}}e^{\sqrt{-1}\theta}$ on $\mathbb{C}_{\beta}$ and taking standard complex coordinates on $\mathbb{C}^{m-1}$. 
This indeed describes the flat K\"{a}hler metric with a cone singularity along a smooth hypersurface. 
It also provides a local model for the tangent cone at a generic point of a divisor along which the concerned K\"{a}hler metric 
is of a conical type. For foundations of K\"{a}hler metrics with cone singularities along a divisor, interested readers can 
consult \cite{Donaldson}.

\section{Set-up of the Dunkl system}\label{sec:Dunkl-system}

In this section we review the Dunkl system developed by Couwenberg-Heckman-Looijenga \cite{Couwenberg-Heckman-Looijenga}, 
which studies the geometric structures on projective arrangement complements. This set-up generalizes the work of 
Deligne-Mostow on Lauricella hypergeometric differential equations, as well as the work of Barthel-Hirzebruch-H\"{o}fer on 
line arrangements in a projective plane, and thus leads to discovering a bunch of complete orbifolds of various geometric types 
(in particular, complex hyperbolic orbifolds).

Let us start with a finite dimensional complex vector space $V$ endowed with a positive definite inner product, a finite collection
$\mathscr{H}$ of linear hyperplanes in $V$. Such $(V,\mathscr{H})$ is called a $\emph{linear hyperplane arrangement}$.
We define the \emph{arrangement complement} as $V^{\circ}:=V-\cup_{H\in\mathscr{H}}H$, i.e., the complement of the
union of the members of $\mathscr{H}$ in $V$. We will always use the superscript $^{\circ}$ to denote the complement of
an arrangement in an analogous situation as long as the arrangement is understood.

We denote by $\mathscr{L}(\mathscr{H})$ the collection of hyperplane
intersections taken from subsets of $\mathscr{H}$; here we understand $V$ is also included in $\mathscr{L}(\mathscr{H})$
as the intersection over the empty subset of $\mathscr{H}$. And for $L\in\mathscr{L}(\mathscr{H})$,
we denote by $\mathscr{H}_{L}$ the collection of $H\in\mathscr{H}$ which contains $L$. On the other hand, each
$H\in\mathscr{H}-\mathscr{H}_{L}$ meets $L$ in a hyperplane of $L$. We denote the collection of these hyperplanes
of $L$ by $\mathscr{H}^{L}$. But we notice that the natural map $\mathscr{H}-\mathscr{H}_{L}\rightarrow\mathscr{H}^{L}$
needs not be injective, so that $\mathscr{H}-\mathscr{H}_{L}$ and $\mathscr{H}^{L}$ cannot be identified in general.
We say that $\mathscr{H}$ is \emph{irreducible} if there does not exist a nontrivial decomposition of $\mathscr{H}$ such that
$\mathscr{H}=\mathscr{H}_{L}\sqcup\mathscr{H}_{L'}$ with $L,L'\in\mathscr{L}(\mathscr{H})$ and $L+ L'=V$. A member
$L\in\mathscr{L}(\mathscr{H})$ is also called irreducible if $\mathscr{H}_{L}$ is. In other words, 
there exists a nontrivial decomposition of $\mathscr{H}_{L}$ if a member $L\in\mathscr{L}(\mathscr{H})$ is not irreducible 
(we call it \emph{reducible}). 
We denote the subset of
irreducible members of $\mathscr{L}(\mathscr{H})$ by $\mathscr{L}_{\mathrm{irr}}(\mathscr{H})$.

Suppose we are given a map $\kappa$ which assigns to every $H\in\mathscr{H}$ a positive real number $\kappa_{H}$, then we can
define a connection $\nabla^{\kappa}$ on the tangent bundle of $V^{\circ}$. For $H\in\mathscr{H}$, let $\pi_{H}\in\mathrm{End}(V)$
denote the orthogonal projection onto $H^{\perp}$, then $\rho_{H}:=\kappa_{H}\pi_{H}$ is self-adjoint with respect to the
inner product. So its kernel is $H$ with trace $\kappa_{H}$. We also let $\omega_{H}$ denote the unique meromorphic
differential on $V$ with divisor $-H$ and residue $1$ along $H$. So $\omega_{H}:=\phi_{H}^{-1} d\phi_{H}$, where
$\phi_{H}$ is a defining linear equation for $H$. Put the $\emph{connection form}$ \index{Connection form}
\[ \Omega^{\kappa}:=\sum_{H\in\mathscr{H}}\omega_{H}\otimes\rho_{H}   \]
and view it as an $\mathrm{End}(V)$-valued holomorphic differential on $V^{\circ}$. Then the desired connection
is defined as follows
\[ \nabla^{\kappa}:=\nabla^{0}-\Omega^{\kappa}  \]
where $\nabla^{0}$ denotes the standard (translation invariant) connection on $V$ restricted on $V^{\circ}$.

Now our $\nabla^{\kappa}$ is easily verified to be torsion free, and there are some criteria for the flatness of
this connection.

\begin{proposition}[Flatness criteria]
The connection $\nabla^{\kappa}$ is $\mathbb{C}^{\times}$-invariant and torsion free. Moreover, the following properties are
	equivalent:
	\begin{enumerate}[label=(\roman*)]
		\item $\nabla^{\kappa}$ is flat,
		
		\item $\Omega^{\kappa}\wedge\Omega^{\kappa}=0$,
		
		\item for every pair $L,L'\in\mathscr{L}(\mathscr{H})$ with $L\subset L'$, the endomorphisms $\sum_{H\in\mathscr{H}_{L}}\rho_{H}$
		and $\sum_{H\in\mathscr{H}_{L'}}\rho_{H}$ commute,
		
		\item \emph{(Codimension 2 conditions)} for every $L\in\mathscr{L}(\mathscr{H})$ of codimension $2$, the sum $\sum_{H\in\mathscr{H}_{L}}\rho_{H}$ commutes with
		each of its terms.
	\end{enumerate}
\end{proposition}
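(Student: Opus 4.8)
The plan is to prove the chain of equivalences (i) $\Leftrightarrow$ (ii) $\Leftrightarrow$ (iii) $\Leftrightarrow$ (iv), after first disposing of the $\mathbb{C}^\times$-invariance and torsion-freeness, which are immediate: $\mathbb{C}^\times$-invariance follows because each $\omega_H$ is homogeneous of degree $0$ and each $\rho_H$ is constant, while torsion-freeness follows because $\Omega^\kappa$ is a symmetric-valued form — more precisely, the torsion of $\nabla^0 - \Omega^\kappa$ is the antisymmetrization of $\Omega^\kappa$ acting as a vector-valued $1$-form, and since each $\rho_H = \kappa_H \pi_H$ is self-adjoint with $\omega_H = \phi_H^{-1}d\phi_H$ exact-up-to-scale, one checks $\Omega^\kappa(\xi)\eta - \Omega^\kappa(\eta)\xi = 0$ directly from $\pi_H(\xi) = \frac{\langle \xi, v_H\rangle}{\langle v_H, v_H\rangle} v_H$ with $v_H$ spanning $H^\perp$. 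Let me now sketch the equivalences.

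First I would compute the curvature. Since $\nabla^0$ is flat, the curvature of $\nabla^\kappa = \nabla^0 - \Omega^\kappa$ is $R^\kappa = d\Omega^\kappa + \Omega^\kappa \wedge \Omega^\kappa$ (signs depending on convention). Each $\omega_H$ is closed ($d\omega_H = d(\phi_H^{-1}d\phi_H) = 0$) and each $\rho_H$ is constant, so $d\Omega^\kappa = 0$; hence $R^\kappa = \Omega^\kappa \wedge \Omega^\kappa$, giving (i) $\Leftrightarrow$ (ii) immediately. For (ii) $\Leftrightarrow$ (iii): expand $\Omega^\kappa \wedge \Omega^\kappa = \sum_{H,H'} (\omega_H \wedge \omega_{H'}) \otimes \rho_H\rho_{H'} = \sum_{H < H'} (\omega_H \wedge \omega_{H'}) \otimes [\rho_H, \rho_{H'}]$. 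The key point is that the $2$-forms $\omega_H \wedge \omega_{H'}$ are not independent: there are linear relations (Arnold/Brieskorn relations) exactly among those pairs $\{H, H'\}$ whose intersection $H \cap H'$ — more generally the full pencil $\mathscr{H}_{H\cap H'}$ — has codimension $2$. So the vanishing of $\Omega^\kappa \wedge \Omega^\kappa$ is equivalent to a collection of identities, one for each codimension-$2$ stratum $L$, asserting that $\sum_{H, H' \in \mathscr{H}_L} [\rho_H, \rho_{H'}]$ weighted appropriately vanishes; combined with the local analysis near a generic point of $L$ this is exactly the statement that $\sum_{H\in\mathscr{H}_L}\rho_H$ commutes with each $\rho_H$, $H \in \mathscr{H}_L$, which is (iv). The implication (iii) $\Rightarrow$ (iv) is trivial (take $L' = L$ of codimension $2$... more precisely apply (iii) with the chain $L \subset H$ for each $H \supset L$), and (iv) $\Rightarrow$ (iii) requires showing that commuting on every codimension-$2$ stratum propagates to all nested pairs $L \subset L'$: one argues that $\sum_{H \in \mathscr{H}_{L'}}\rho_H$ is a sum of the operators attached to the codimension-$2$ strata between $L'$ and $V$, and uses that these all preserve a common eigenspace decomposition.

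The main obstacle I expect is the passage (ii) $\Leftrightarrow$ (iv), specifically pinning down the precise linear-algebra relations among the $\omega_H \wedge \omega_{H'}$ and showing that the "codimension-$2$" identities are both necessary and sufficient for $\Omega^\kappa \wedge \Omega^\kappa = 0$. Concretely: fix a codimension-$2$ stratum $L$ and work in coordinates where $L = \{x_1 = x_2 = 0\}$ with $\mathscr{H}_L$ a pencil of hyperplanes through $L$; the forms $\omega_H$ for $H \in \mathscr{H}_L$ satisfy the single relation $\sum$ (coefficient) $\cdot\, \omega_H = $ exact, and the wedges $\omega_H \wedge \omega_{H'}$ span a space of dimension $|\mathscr{H}_L| - 1$ rather than $\binom{|\mathscr{H}_L|}{2}$; extracting the coefficient of each independent wedge in $\Omega^\kappa \wedge \Omega^\kappa$ and setting it to zero yields, after a short manipulation, that $[\rho_H, \sum_{H' \in \mathscr{H}_L}\rho_{H'}] = 0$ for each $H \in \mathscr{H}_L$. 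Summing contributions over all codimension-$2$ $L$ and checking there is no cross-interference (because wedges attached to different strata are linearly independent once one restricts to a neighborhood of a generic point of each) closes the equivalence. I would then remark that condition (iv) is manifestly the most practical to verify and is the form used throughout the rest of the paper.
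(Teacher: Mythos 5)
The paper itself does not prove this proposition: it is imported verbatim from Couwenberg--Heckman--Looijenga (their Lemmas 2.2--2.3 and Corollary), so there is no in-paper argument to compare against. Your proposal reconstructs the standard proof and is correct in outline: the curvature computation $R^{\kappa}=\Omega^{\kappa}\wedge\Omega^{\kappa}$ (using $d\omega_{H}=0$ and constancy of $\rho_{H}$) gives (i)$\Leftrightarrow$(ii); the expansion $\Omega^{\kappa}\wedge\Omega^{\kappa}=\sum_{\{H,H'\}}\omega_{H}\wedge\omega_{H'}\otimes[\rho_{H},\rho_{H'}]$, the Brieskorn/Orlik--Solomon decomposition of the degree-two part into independent local contributions indexed by codimension-$2$ strata, and the computation of the coefficient of a basis $\omega_{H_{1}}\wedge\omega_{H_{m}}$ within a single pencil do yield exactly $[\sum_{H\in\mathscr{H}_{L}}\rho_{H},\rho_{H_{m}}]=0$, i.e.\ (ii)$\Leftrightarrow$(iv); and your symmetry argument for torsion-freeness via $\phi_{H}(\xi)\pi_{H}(\eta)=\phi_{H}(\eta)\pi_{H}(\xi)$ is the right one.

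The one step that would not survive being written out as you have sketched it is (iv)$\Rightarrow$(iii). The phrase ``$\sum_{H\in\mathscr{H}_{L'}}\rho_{H}$ is a sum of the operators attached to the codimension-$2$ strata between $L'$ and $V$'' is not literally true (such a sum overcounts each $\rho_{H}$), and ``preserve a common eigenspace decomposition'' is not the mechanism. The correct argument is combinatorial: fix $H'\in\mathscr{H}_{L'}\subset\mathscr{H}_{L}$ and partition $\mathscr{H}_{L}\setminus\{H'\}$ into the sets $\mathscr{H}_{M}\setminus\{H'\}$, where $M=H\cap H'$ runs over the codimension-$2$ members of $\mathscr{L}(\mathscr{H})$ with $L\subset M\subset H'$. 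Then
\[
\sum_{H\in\mathscr{H}_{L}}[\rho_{H},\rho_{H'}]=\sum_{M}\Bigl[\sum_{H\in\mathscr{H}_{M}}\rho_{H},\rho_{H'}\Bigr]=0
\]
by (iv), so $\sum_{H\in\mathscr{H}_{L}}\rho_{H}$ commutes with every individual $\rho_{H'}$, $H'\in\mathscr{H}_{L'}$, hence with $\sum_{H'\in\mathscr{H}_{L'}}\rho_{H'}$. With that replacement your proof closes; the remaining gestured-at ingredient (independence of the local wedge contributions attached to distinct codimension-$2$ strata) is standard and your residue/genericity justification is acceptable.
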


If the connection $\nabla^{\kappa}$ is flat, we say that the triple $(V,\mathscr{H},\kappa)$ is a \emph{Dunkl system}.
\index{Dunkl system}
From \cite{Couwenberg-Heckman-Looijenga} we know that besides the Lauricella case (which we will revisit in Section 
$\ref{sec:examples}$), there also exist plenty of other
Dunkl systems of interest, such as the arrangement types associated with complex reflection groups: 
let $G$ be a finite complex reflection group acting irreducibly and unitarily on a complex inner product space $V$, 
and $\mathscr{H}$ the collection of those reflection hyperplanes induced by $G$, assuming that those numbers 
$\kappa_{H}$ associated to every $H\in\mathscr{H}$ are constant on the $G$-orbits, then $(V,\mathscr{H},\kappa)$ is a Dunkl system.

It is well-known that a flat torsion free connection on a tangent bundle defines an affine structure.
We denote the holonomy group of the Dunkl system by $\Gamma$. So both
$V^{\circ}$ and its $\Gamma$-covering
$\widehat{V^{\circ}}$ come with one, denoted by $\mathrm{Aff}_{V^{\circ}}$ resp. $\mathrm{Aff}_{\widehat{V^{\circ}}}$,
that is a subsheaf consisting of affine-linear
functions of the structure sheaf. The sheaf of affine-linear functions
is in fact the sheaf of holomorphic functions
whose differential is flat for the connection, which corresponds to a system of second order differential equations.
Conversely, an affine structure is always given by a flat torsion free connection. Then we see that the space
of affine-linear functions on $\widehat{V^{\circ}}$ is given by $\mathrm{Aff}(\widehat{V^{\circ}}):=H^{0}
(\widehat{V^{\circ}},\mathrm{Aff}_{\widehat{V^{\circ}}})$. We also denote by $A$ the set of
linear forms $\mathrm{Aff}(\widehat{V^{\circ}})\rightarrow\mathbb{C}$ which are the identity on $\mathbb{C}$.
In fact, this is an affine $\Gamma$-invariant hyperplane in $\mathrm{Aff}(\widehat{V^{\circ}})^{*}$. Then the evaluation map
$ev:\widehat{V^{\circ}}\rightarrow A$ which assigns to $\hat{z}$ the linear form $ev_{\hat{z}}:\hat{f}
\in\mathrm{Aff}(\widehat{V^{\circ}})\mapsto \hat{f}(\hat{z})\in\mathbb{C}$ is called the
\emph{developing map} of the affine structure. It is $\Gamma$-equivariant and a local affine isomorphism. Therefore, 
the developing map determines a natural affine atlas on $V^{\circ}$ whose charts take values in $A$ and whose transition maps 
lie in $\Gamma$. We thus call the descended evaluation map $ev:V^{\circ}\rightarrow A$ the \emph{developing map} 
as well, of the affine structure on $V^{\circ}$, which is a multivalued local isomorphism up to $\Gamma$. 

We already know that the connection is invariant under the scalar multiplication by $e^{t}\in \mathbb{C}^{\times}$. 
One can verify that for $t$ close to $0$ enough, those affine-linear functions behave like a homothety action by a 
scalar $e^{(1-\kappa_{0})t}$ when $\kappa_{0}\neq 1$, or like a translation when $\kappa_{0}=1$. 
(See below for the definition of $\kappa_{0}$.) This shows that when 
$\kappa_{0}\neq 1$, the affine structure on $V^{\circ}$ is in fact a linear structure and thus endows on $\mathbb{P}(V^{\circ})$ 
with a corresponding projective structure. This is realized by the projectivization of the developing map 
and we thus call it the \emph{projective developing map} of the projective structure, and so for its projectivized 
$\Gamma$-covering space $\mathbb{P}(\widehat{V^{\circ}})$. 
While when $\kappa_{0}=1$, the projection $V^{\circ}\rightarrow \mathbb{P}(V^{\circ})$ 
is affine linear so that $\mathbb{P}(V^{\circ})$ inherits an affine structure from $V^{\circ}$.

Furthermore, if we want to extend this developing map, we need to understand how the system behaves 
near a given subvariety of its singular locus. For that it is natural to blow up that subvariety so that 
we are in the codimension one case. We thus encounter the simplest degenerating affine structure described as follows. 

\begin{definition}\label{def:degeneration}
Let $E$ be a smooth connected hypersurface in a complex manifold $M$ and an affine structure is endowed on $M-E$. 
We say that the affine structure on $M-E$ has an \emph{infinitesimally simple degeneration along $E$ of 
logarithmic exponent $\lambda\in\mathbb{C}$} if 
\begin{enumerate}[label=(\roman*)]
	
\item $\nabla$ extends to $\Omega_{M}(\log E)$ with a logarithmic pole along $E$,

\item the residue of this extension along $E$ preserves the subsheaf $\Omega_{E}\subset \Omega_{M}(\log E)\otimes\mathcal{O}_{E}$ 
and its eigenvalue on the quotient sheaf $\mathcal{O}_{E}$ is $\lambda$ and 

\item the residue endomorphism restricted to $\Omega_{E}$ is semisimple and all of its eigenvalues are $\lambda$ or $0$.
\end{enumerate}
When in addition
\begin{enumerate}[label=(\roman*),resume]
	\item the connection has semisimple monodromy on the tangent bundle,
\end{enumerate}
then the adjective \emph{infinitesimally} can be dropped and we say that the affine structure on $M-E$ has a 
\emph{simple degeneration along $E$}.
\end{definition}

In our situation, for those intersection members we have some nice hereditary properties of the Dunkl connection. 
For $L\in\mathscr{L}(\mathscr{H})$ irreducible, since $\sum_{H\in\mathscr{H}_{L}}\kappa_{H}\pi_{H}$ commutes with each of its terms, 
we must have that $\sum_{H\in\mathscr{H}_{L}}\kappa_{H}\pi_{H}=\kappa_{L}\pi_{L}$ for some number $\kappa_{L}$,
where $\pi_{L}$ is the orthogonal projection onto $L^{\perp}$. So by a trace computation, we have
\[
\kappa_{L}=\mathrm{codim}(L)^{-1}\sum_{H\in\mathscr{H}_{L}}\kappa_{H}.
\]
From now on we assume that the whole system is irreducible and 
that the common intersection of all members of $\mathscr{H}$ 
is reduced to the origin, hence for the origin, we have
\[
\kappa_{0}=\dim(V)^{-1}\sum_{H\in\mathscr{H}}\kappa_{H}.
\]
In fact, these numbers $\kappa_{L}$ have
an inclusion-reverse property, i.e., $\kappa_{L'}<\kappa_{L}$ if $L\subsetneq L'$. If we denote by $E$ the
exceptional divisor of the blow-up of $L$ in V, then the affine structure on $V^{\circ}$ is of infinitesimally simple type 
along $E^{\circ}$ with logarithmic exponent $\kappa_{L}-1$ (see Lemma 2.21 of \cite{Couwenberg-Heckman-Looijenga}).

Then for every
$L\in\mathscr{L}(\mathscr{H})$, put
\[ \Omega_{L}:=\sum_{H\in\mathscr{H}_{L}}\omega_{H}\otimes\rho_{H}.  \]
This defines a Dunkl connection $\nabla_{L}$ on $(V/L)^{\circ}$.

Notice that for each $I\in\mathscr{H}^{L}$,
there exists a unique irreducible intersection $I(L)\in\mathscr{L}(\mathscr{H})$ such that
$L\cap I(L)=I$. Then we can also define a Dunkl connection $\nabla^{L}$ on $L^{\circ}$ as follows
\[ \Omega^{L}:=\sum_{I\in\mathscr{H}^{L}}\omega_{I}^{L}\otimes \kappa_{I(L)}\pi_{I}^{L}  \]
where $\pi_{I}^{L}$ denotes the restriction of $\pi_{I}$ to $L$.

We thus call the Dunkl connection $\nabla_{L}$ resp. $\nabla^{L}$ defined on $(V/L)^{\circ}$ resp. $L^{\circ}$ the
$L$-$\emph{transversal}$ resp. $L$-$\emph{longitudinal}$ Dunkl connection.

For each given $\kappa$, we can deform the connection and the Hermitian form at the same time so that we can induce
different geometric structures on $\mathbb{P}(V^{\circ})$. First we can deform the standard connection $\nabla^{0}$ in
a real analytic way to a one-parameter family of flat torsion free connections, denoted by $\nabla^{s}$, $s\geq 0$, such that 
it would reach $\nabla^{\kappa}$ for some $s>0$.
On the other hand, the inner product gives rise to a translation invariant metric on $V$. Its restriction $h^{0}$ to
$V^{\circ}$ is flat for $\nabla^{0}$. Then we can also deform $h^{0}$ to a family of Hermitian forms such that each
$h^{s}$ is flat for $\nabla^{s}$. Since the scalar multiplication in $V$ acts locally like homothety, we have that
$\mathbb{P}(V^{\circ})$ inherits a Hermitian metric $g^{s}$ from the one on $V^{\circ}$. We only allow $s$ vary in an interval for which
$g^{s}$ stays positive definite. But meanwhile this still makes it possible for $h^{s}$ to become degenerate or of hyperbolic
type, as long as for every $p\in V^{\circ}$ the restriction of $h^{s}$ to a hyperplane which is supplementary and
perpendicular to $T_{p}(\mathbb{C}p)$ is positive definite. We call the last property the \emph{admissible} 
condition for a Hermitian form. 

We conclude this discussion in the following theorem.

\begin{theorem}[{\cite[Theorem 3.1]{Couwenberg-Heckman-Looijenga}}]\label{thm:hermitian-forms}
Let $\dim V\geq 2$.	Suppose we are given a Dunkl system $(V,\mathscr{H},\kappa)$ such that $\kappa_{0}=1$.
Assume that	there is for every $s\geq 0$ a nonzero Hermitian form $h^{s}$ on $V^{\circ}$ which is flat for $\nabla^{s\kappa}$.
Assume it is real-analytic in $s$ and is equal to the given positive definite form for $s=0$. Then we have:
	\begin{enumerate}[label=(\roman*)]
		\item for $s<1$, $h^{s}>0$,
		
		\item $h^{1}\geq 0$ and its kernel is spanned by the Euler field,
		
		\item there exists a $m_{\mathrm{hyp}}\in(1,\infty]$ such that for $s\in(1,m_{\mathrm{hyp}})$, $h^{s}$ is of hyperbolic type, 
		and $h^{s}$ is admissible as long as $s\kappa_{H}\leq 1$ for all $H\in \mathscr{H}$ (although it is likely that $h^{s}$ is 
		admissible for all $s\in(1,m_{\mathrm{hyp}})$).
	\end{enumerate}
	We call $m_{\mathrm{hyp}}$ the $\emph{hyperbolic exponent}$ of the family.
\end{theorem}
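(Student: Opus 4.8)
The plan is to reduce the whole statement to the behaviour of a single real-analytic function on $V^{\circ}$, namely $e^{s}:=h^{s}(\mathcal{E},\mathcal{E})$, where $\mathcal{E}$ denotes the Euler (position) vector field $z\mapsto z$. First I would record the identity $\nabla^{s\kappa}_{X}\mathcal{E}=(1-s\kappa_{0})X=(1-s)X$ for every vector field $X$: since $\nabla^{s\kappa}=\nabla^{0}-s\Omega^{\kappa}$, a short computation with $\omega_{H}=\phi_{H}^{-1}d\phi_{H}$ and $\rho_{H}=\kappa_{H}\pi_{H}$ gives $\Omega^{\kappa}(X)\mathcal{E}=\big(\sum_{H\in\mathscr{H}}\rho_{H}\big)X$, and $\sum_{H}\rho_{H}=\kappa_{0}\,\mathrm{id}_{V}=\mathrm{id}_{V}$ because the system is irreducible, so this sum commutes with each of its terms (the codimension-$2$ conditions at $L=\{0\}$) and is therefore scalar, of trace $\sum_{H}\kappa_{H}=\dim V$. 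Two consequences will be used throughout: along the flow $z\mapsto e^{t}z$ one gets $e^{s}(e^{t}z)=e^{2(1-s)\mathrm{Re}\,t}\,e^{s}(z)$, so $e^{s}$ is $\mathbb{C}^{\times}$-homogeneous of degree $2(1-s)$ and can be written $e^{s}(z)=\|z\|^{2(1-s)}f^{s}([z])$ with $f^{s}$ real-analytic on $\mathbb{P}(V^{\circ})$; and at $s=1$ the field $\mathcal{E}$ is $\nabla^{1}$-flat, so $\mathbb{C}\mathcal{E}$ is a $\nabla^{1}$-parallel line bundle and $e^{1}$ is a constant.

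Next I would use that, being parallel for the flat connection $\nabla^{s\kappa}$ on the connected manifold $V^{\circ}$, the form $h^{s}$ has signature independent of $z$; and that in the $h^{s}$-orthogonal splitting $T_{z}V=\mathbb{C}z\oplus(\mathbb{C}z)^{\perp_{h^{s}}}$ (valid whenever $e^{s}(z)\neq0$) it becomes $\langle e^{s}(z)\rangle\oplus h^{s}|_{(\mathbb{C}z)^{\perp_{h^{s}}}}$, whose second summand, divided by $|e^{s}(z)|$, is precisely the form inducing the metric $g^{s}$ on $\mathbb{P}(V^{\circ})$, hence positive definite on the range of $s$ under consideration. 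Thus $h^{s}$ is positive definite, of signature $(\dim V-1,1)$, or degenerate with kernel exactly $\mathbb{C}\mathcal{E}$, according as $e^{s}$ is everywhere positive, everywhere negative, or identically zero on $V^{\circ}$ — a trichotomy driven by the sign of the single real-analytic function $s\mapsto e^{s}(z_{0})$. Since $e^{0}=\|z\|^{2}>0$, a change of sign can happen only through a value $s_{0}$ with $e^{s_{0}}\equiv0$; such an $s_{0}$ cannot lie in $[0,1)$, for there $h^{s_{0}}$ is a limit of positive definite forms, hence semidefinite, so $\mathcal{E}$ lies in the parallel radical $\ker h^{s_{0}}$, whence $(1-s_{0})\,\mathrm{id}=\nabla^{s_{0}\kappa}\mathcal{E}$ has image in $\ker h^{s_{0}}$, which (as $h^{s_{0}}\neq0$ and $\dim V\geq2$) is absurd. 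This already gives (i): $h^{s}>0$ for $0\leq s<1$.

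The hard part, and the place where $\kappa_{0}=1$ is indispensable, is to show $e^{1}\equiv0$. For this I would blow up $V$ at the origin: along the exceptional divisor $E_{0}\cong\mathbb{P}(V)$ the affine structure has an infinitesimally simple degeneration of logarithmic exponent $s\kappa_{0}-1=s-1$ (Lemma 2.21 of \cite{Couwenberg-Heckman-Looijenga} with $L=\{0\}$), which equals $0$ at $s=1$; this resonance forces the $\nabla^{1}$-monodromy around $E_{0}$ to be a \emph{nontrivial} unipotent $\mathrm{id}+N$ with $N$ of rank one and image the developed Euler line $\mathbb{C}\mathcal{E}$ (for $s<1$ these monodromies are the semisimple homotheties $e^{2\pi i(1-s)}\mathrm{id}$, degenerating into this Jordan block as $s\uparrow1$). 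Since $h^{1}$ is holonomy-invariant, writing $Nx=\ell(x)\mathcal{E}$ with $\ell(\mathcal{E})=0$ and expanding $h^{1}\big((\mathrm{id}+N)x,(\mathrm{id}+N)y\big)=h^{1}(x,y)$ at $y=\mathcal{E}$ yields $\ell(x)\,h^{1}(\mathcal{E},\mathcal{E})=0$ for all $x$, i.e. $e^{1}\equiv0$. Hence $h^{1}\geq0$ (a limit of positive forms) with kernel exactly $\mathbb{C}\mathcal{E}$, the quotient form being the positive definite flat metric $g^{1}$ — this is (ii). I expect the real obstacles to sit here: one must establish that $N\neq0$ at $s=1$, and, for the non-degeneracy underlying (i) on the whole of $(0,1)$, that the holonomy of $\nabla^{s\kappa}$ acts irreducibly there — being generated, among others, by the complex reflections in the hyperplanes of the irreducible arrangement $\mathscr{H}$, whose rotations are nontrivial in this range — so that $h^{s}$ admits no invariant isotropic subspace; both are part of the structure theory of \cite{Couwenberg-Heckman-Looijenga}.

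Finally, for (iii): because $\ker h^{1}=\mathbb{C}\mathcal{E}$ is one-dimensional, exactly one eigenvalue of $h^{s}$ crosses $0$ at $s=1$, and it crosses transversally — for otherwise $s\mapsto e^{s}(z_{0})$ would have a double zero at $1$ and the structure would remain elliptic just past $1$, whereas the local model at $E_{0}$, now of logarithmic exponent $s-1>0$, inverts the Euler direction; thus $h^{s}$ has signature $(\dim V-1,1)$ for $s$ just above $1$, and this persists until the next value at which $h^{s}$ degenerates. That value, $m_{\mathrm{hyp}}$, is $>1$ since the degeneracy set is discrete by real-analyticity in $s$, and it may be $+\infty$. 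For the admissibility claim I would analyse $h^{s}$ near a generic point of each $H\in\mathscr{H}$: there the structure is the product of the $H$-longitudinal Dunkl connection $\nabla^{H}$ on $H^{\circ}$ with an infinitesimally simple degeneration transverse to $H$ of logarithmic exponent $s\kappa_{H}-1$, along which the transverse direction — which lies in an $h^{0}$-perpendicular supplement of the Euler line — carries a cone metric of cone angle $2\pi s\kappa_{H}$; the bound $s\kappa_{H}\leq1$ keeps every such angle $\leq2\pi$, and the codimension-$2$ conditions together with a descending induction over the strata of $\mathscr{L}(\mathscr{H})$ propagate positivity of $h^{s}$ to the whole supplement, i.e. admissibility — the exact analogue of the Deligne--Mostow bound $\mu_{i}\leq1$.
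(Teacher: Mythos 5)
This statement is quoted verbatim from \cite[Theorem 3.1]{Couwenberg-Heckman-Looijenga}; the paper under review gives no proof of it, so the only comparison available is with the structure of your argument itself. Your preliminary computations are sound: $\nabla^{s\kappa}_{X}\mathcal{E}=(1-s\kappa_{0})X$ does follow from $\sum_{H}\rho_{H}=\kappa_{0}\,\mathrm{id}$, the homogeneity of $e^{s}$ is correct, and the overall strategy of tracking the first degenerate parameter of a real-analytic family of parallel forms is the right one. But the central step of part (ii) is broken. The monodromy of the loop $t\mapsto e^{2\pi it}z$ at $s=1$ is, as an affine transformation of the developing target $A$, a nontrivial translation; its \emph{linear part} --- which is what acts on the translation space $\vec{A}\cong T_{z}V^{\circ}$, the space on which $h^{1}$ actually lives --- is the identity. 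Your operator $\mathrm{id}+N$ with $\mathrm{im}\,N=\mathbb{C}\mathcal{E}$ is the action on $\mathrm{Aff}(\widehat{V^{\circ}})^{*}$ (one dimension too big), not on the space carrying the form, so the invariance identity $h^{1}((\mathrm{id}+N)x,(\mathrm{id}+N)y)=h^{1}(x,y)$ is vacuously satisfied and the conclusion $\ell(x)\,h^{1}(\mathcal{E},\mathcal{E})=0$ does not follow. Since $e^{1}\equiv 0$ is exactly the content of (ii) (and the hinge on which (iii) turns), this is a genuine gap, not a presentational one.

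Two further points. In your exclusion of a degeneration at $s_{0}\in[0,1)$ you assert that $\mathcal{E}$ lies in $\ker h^{s_{0}}$; nothing forces this --- the kernel is some parallel subbundle, and your argument only rules out the case $\mathcal{E}\in\ker h^{s_{0}}$ (where the observation $\nabla_{X}\mathcal{E}=(1-s_{0})X\in\ker h^{s_{0}}$ for all $X$ is indeed a nice contradiction). Ruling out every other proper parallel kernel requires irreducibility of the $\nabla^{s\kappa}$-holonomy, which you defer to the structure theory of \cite{Couwenberg-Heckman-Looijenga} --- i.e.\ to the very source being proved. Similarly, the transversality of the sign change of $e^{s}$ at $s=1$ in (iii) is asserted by an appeal to the local model ``inverting the Euler direction'' rather than proved; note that the family $h^{s}$ may be rescaled by any positive real-analytic function of $s$, so any argument here must be insensitive to such rescaling and should pin down the sign of $e^{s}$ for $s$ slightly above $1$ by an actual computation (e.g.\ via the behaviour of the flat form transverse to a hyperplane, where the cone angle $2\pi(1-s\kappa_{H})$ and the admissibility bound $s\kappa_{H}\leq 1$ genuinely enter, as in the last part of your sketch).
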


It turns out that in many cases of interest (including the examples mentioned above), such Hermitian forms $h$ do exist. 
Then the Hermitian form endows $\mathbb{P}(V^{\circ})$ with a natural \emph{K\"{a}hler} metric which is $\Gamma$-invariant, 
and locally	isometric to 
\begin{itemize}
	\item[\textbf{ell:}] a Fubini-Study metric, if $h$ is positive definite,
	
	\item[\textbf{par:}] a flat metric, if $h$ is semidefinite with kernel $T_{p}(\mathbb{C}p)$,
	
	\item[\textbf{hyp:}] a complex hyperbolic metric, if $h$ is nondegenerate of hyperbolic signature, and negative on  $T_{p}(\mathbb{C}p)$.
\end{itemize}
We refer to these three situations as the \emph{elliptic}, \emph{parabolic} and \emph{hyperbolic} case respectively.
They are all of constant holomorphic sectional curvatures: positive, zero, negative respectively, 
and hence their curvature models in terms of local coordinates (suppose 
$\dim V=n+1$) are given by 
\begin{align*}
\sqrt{-1}\partial\bar{\partial}\log (1+\sum_{i=1}^{n}|z_{i}|^{2}),\quad \sqrt{-1}\partial\bar{\partial} (\sum_{i=1}^{n}|z_{i}|^{2}), \quad
-\sqrt{-1}\partial\bar{\partial}\log (1-\sum_{i=1}^{n}|z_{i}|^{2}).
\end{align*}

For us it is also important to write the local models around the singular locus if along which these metrics have cone singularities. 
It is clear that for each case the tangent cone at a point of the singular locus can be described by a flat model. 
We now take the simple normal crossing locus for example (also discussed in \cite[Section 4]{DeBorbon-Spotti}), suppose it is written as 
$\{z_{1}\cdots z_{k}=0\}$ in local coordinates, the tangent cone must be isomorphic to 
$\mathbb{C}_{\beta_{1}}\times\cdots \times\mathbb{C}_{\beta_{k}}\times \mathbb{C}^{n-k}$ (a direct consequence from the discussion 
on the flat model for the divisor case in the preceding section), then the constant holomorphic sectional curvature models 
are given by 
\begin{align}
&\sqrt{-1}\partial\bar{\partial}\log (1+\sum_{i=1}^{k}|z_{i}|^{2\beta_{i}}+\sum_{i=k+1}^{n}|z_{i}|^{2}), \label{eqn:positive} \\ 
&\sqrt{-1}\partial\bar{\partial} (\sum_{i=1}^{k}|z_{i}|^{2\beta_{i}}+\sum_{i=k+1}^{n}|z_{i}|^{2}), \label{eqn:zero} \\
&-\sqrt{-1}\partial\bar{\partial}\log \big(1-(\sum_{i=1}^{k}|z_{i}|^{2\beta_{i}}+\sum_{i=k+1}^{n}|z_{i}|^{2})\big), \label{eqn:negative}
\end{align}
for the positive, zero and negative case respectively.

Depending on the symmetry invoked by the function $\kappa$ and the configuration of hyperplanes, 
there also exists a group $G$ which is a finite subgroup of the unitary group $U(V)$ 
such that under the action of this group the Dunkl system is invariant. We call such a group the 
\emph{Schwarz symmetry group}.

If the so-called \emph{Schwarz condition} (see Definition 4.2 of \cite{Couwenberg-Heckman-Looijenga} for the full description) 
is imposed on the system (this is the generalization of \emph{$\Sigma$INT condition} 
in the theory of Deligne-Mostow on the Lauricella functions, or of \emph{orbifold condition} 
in Thurston's work on moduli of cone metrics on $\mathbb{P}^{1}$), Couwenberg, Heckman and Looijenga \cite{Couwenberg-Heckman-Looijenga} 
showed that the space in question $\mathbb{P}(G\backslash V^{\circ})$ can be metrically completed to an 
elliptic, parabolic and hyperbolic orbifold respectively.

\section{Cone-manifolds arising from the Dunkl system}\label{sec:cone-manifolds-structres}

In this section we treat the parabolic, elliptic and hyperbolic cases respectively when there is no Schwarz condition imposed. 
It turns out that the space in question $\mathbb{P}(G\backslash V^{\circ})$ can still be metrically completed to a kind of spaces, 
namely, the cone-manifolds, introduced in Section $\ref{sec:cone-manifolds}$.

From now on the complex dimension of $V$ is always set to be $n+1$, hence $\dim_{\mathbb{C}}\mathbb{P}(V)=n$.

\subsection{Parabolic case}

We first treat the parabolic case. This case is relatively easy to deal with while it is also of its own interest, 
since it provides a model for the tangent cone of all three geometric types treated in this paper (including for itself).

The main result for the parabolic case is as follows.

\begin{theorem}\label{thm:parabolic}
	Let be given a Dunkl system with $\kappa_{H}\in (0,1)$ for every $H\in \mathscr{H}$. 
	Suppose that $\kappa_{0}=1$ and that it admits a flat positive semidefinite Hermitian form on the tangent bundle of $V^{\circ}$. 
	Then the projective (``restricted" in this case, actually) developing map realizes the 
	space $\mathbb{P}(G\backslash V^{\circ})$ as a divisor complement of 
	its metric completion, via which $\mathbb{P}(G\backslash V^{\circ})$ acquires the structure of a parabolic cone-manifold 
	(i.e., $(\mathbb{C}^{n},\mathrm{U}(n))-$cone-manifold). In this case, the metric completion of $\mathbb{P}(G\backslash V^{\circ})$ 
	is $\mathbb{P}(G\backslash V)$.
\end{theorem}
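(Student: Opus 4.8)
The plan is to analyze the parabolic Dunkl system stratum by stratum via its natural blow-up/blow-down structure, showing inductively that the metric completion carries a $(\mathbb{C}^n,\mathrm{U}(n))$-cone-manifold structure and that the completion is exactly $\mathbb{P}(G\backslash V)$. First I would set up the base of the induction: away from the arrangement, on $\mathbb{P}(G\backslash V^\circ)$, the flat positive semidefinite Hermitian form $h$ descends (using $\kappa_0=1$ so that the kernel is spanned by the Euler field and the projective structure is really an affine structure, as recalled after Theorem \ref{thm:hermitian-forms}) to a genuine flat K\"ahler metric locally isometric to $\mathbb{C}^n$ with holonomy in $\mathrm{U}(n)$; this is the case \textbf{par} of the trichotomy. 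So the open dense top stratum is an honest $(\mathbb{C}^n,\mathrm{U}(n))$-manifold.

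Next I would carry out the local analysis near a generic point of each irreducible member $L\in\mathscr{L}_{\mathrm{irr}}(\mathscr{H})$. Here the key input is the hereditary structure of the Dunkl connection recalled in Section \ref{sec:Dunkl-system}: blowing up $L$ produces an exceptional divisor $E$ along which the affine structure has an infinitesimally simple degeneration with logarithmic exponent $\kappa_L-1$ (Lemma 2.21 of \cite{Couwenberg-Heckman-Looijenga}), and the system decomposes, transversally to $E^\circ$, into the $L$-longitudinal Dunkl connection $\nabla^L$ on $L^\circ$ and the one-dimensional transversal factor governed by $\kappa_L$. Since every $\kappa_H\in(0,1)$ and $\kappa_L$ is the average of the $\kappa_H$ over $\mathscr{H}_L$, we have $\kappa_L\in(0,1)$ as well, so the transversal factor is precisely a model cone $\mathbb{C}_{\kappa_L}$ in the sense of Section \ref{sec:cone-manifolds} (cone angle $2\pi\kappa_L<2\pi$). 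Thus near a generic point of $E$ the developing map exhibits the metric as the Riemannian product $\mathbb{C}_{\kappa_L}\times(\text{flat metric on }L^\circ)$, i.e.\ a standard K\"ahler cone metric along a smooth hypersurface; blowing $E$ back down to $L$, the completion fills in $L$ itself with a cone-manifold structure whose normal cone along $\mathbb{P}(L)$ is the prime factor $S^0 * (\text{pt})$ corresponding to $\mathbb{C}_{\kappa_L}$. This is where I would invoke the inductive hypothesis on the lower-dimensional system $(\,L,\mathscr{H}^L,\kappa^L\,)$ (with its longitudinal connection $\nabla^L$, which is again a parabolic Dunkl system of the same shape on a space of one dimension less) to know that $L$ itself is completed to a $(\mathbb{C}^{n-\mathrm{codim}L},\mathrm{U})$-cone-manifold, and then the cone-over structure (Definition of $(X,G)$-cone-manifold and the join formulas for unit tangent cones) assembles the local model along $\mathbb{P}(L)$ as a join of the inductively-known link with the spherical suspension coming from the transversal $\mathbb{C}_{\kappa_L}$. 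For non-irreducible $L$ one uses the product decomposition of $\mathscr{H}_L$ to write the local structure as a product of the irreducible pieces, which by the flatness criteria (iii)/(iv) commute, so the tangent cone is the corresponding product $\mathbb{C}_{\beta_1}\times\cdots\times\mathbb{C}_{\beta_k}\times\mathbb{C}^{n-k}$ as in \eqref{eqn:zero}; this handles all strata simultaneously and shows the loci $\mathbb{P}(G\backslash L)$ are precisely the strata $M[k]$.

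Having the local models at every stratum, I would then globalize: the developing map $ev:\widehat{V^\circ}\to A$ is $\Gamma$-equivariant and a local isomorphism, and since $\kappa_0=1$ the projection to $\mathbb{P}$ gives a projective (restricted) developing map; the completed local charts patch via transition maps in $\mathrm{U}(n)$ because both $h$ and the connection are flat and $\Gamma\subset$ the relevant unitary group. The stratification by the $\mathbb{P}(G\backslash L)$ is locally finite, the top stratum is open dense, and there is no codimension-one stratum (the transversal cone angle being the \emph{complex} datum $\mathbb{C}_{\kappa_L}$ makes each exceptional divisor real codimension two), so $\mathbb{P}(G\backslash V^\circ)$ is exactly the top stratum of its completion and the completion is the blow-down recovering $\mathbb{P}(G\backslash V)$, with $\mathbb{P}(G\backslash V^\circ)$ realized as the complement of the divisor $\bigcup_{H}\mathbb{P}(G\backslash H)$. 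I expect the main obstacle to be the inductive step gluing the transversal cone factor to the longitudinal cone-manifold uniformly in a neighborhood of a whole (possibly singular, in the blow-down) stratum $\mathbb{P}(G\backslash L)$ — that is, promoting the generic-point product model $\mathbb{C}_{\kappa_L}\times L^\circ$ to a genuine cone-over-a-compact-$(X_p,G_p)$-cone-manifold statement valid along all of $\mathbb{P}(L)$, including where $L$ meets further members of the arrangement; this requires compatibility of the two blow-up procedures (transversal to $L$ versus within $L$) and is exactly the kind of point where the acknowledged gap in the first version presumably lay, so I would be careful to phrase the induction on the poset $\mathscr{L}_{\mathrm{irr}}(\mathscr{H})$ ordered by reverse inclusion and to use the commuting property from the flatness criteria to keep the transversal and longitudinal data independent.
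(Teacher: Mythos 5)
There are two concrete problems with your proposal, both in the local analysis at the strata. First, the cone angle is wrong: the affine structure degenerates along $E(L)$ with logarithmic exponent $\kappa_{L}-1$, so the developing map behaves like $t^{1-\kappa_{L}}$ transversally and the pulled-back flat metric is $(1-\kappa_{L})^{2}|t|^{-2\kappa_{L}}|dt|^{2}$, i.e.\ the model is $\mathbb{C}_{1-\kappa_{L}}$ with cone angle $2\pi(1-\kappa_{L})$ (divided further by $p_{H}$ when $G$ is nontrivial), not $\mathbb{C}_{\kappa_{L}}$ with angle $2\pi\kappa_{L}$. Since $\kappa_{H}\in(0,1)$ this does not affect the qualitative conclusion, but it is the wrong formula (compare Thurston's angle $2\pi(1-\mu_{i}-\mu_{j})$ in the Lauricella example). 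Second, and more seriously, your induction runs over the wrong subsystem. You treat every irreducible $L$ as contributing a one-dimensional transversal factor $\mathbb{C}_{\beta}$, so that the normal cone to $\mathbb{P}(L^{\circ})$ is a circle and the unit tangent cone is a join of that circle with the link of $\mathbb{P}(L)$ known from the longitudinal system $(L,\mathscr{H}^{L})$. That is only correct when $L$ is a hyperplane. For irreducible $L$ of codimension $\geq 2$ the normal slice is the whole transversal Dunkl system $(V/L,\mathscr{H}_{L})$, and the normal cone at $z\in\mathbb{P}(L^{\circ})$ is $S_{0}(V/L)$, a circle bundle (with fibre length $2\pi(1-\kappa_{L})$) over $\mathbb{P}(V/L)$ carrying its own singular structure; the induction must therefore be on the transversal quotients, assuming $\mathbb{P}(V/L)$ is a cone-manifold of dimension $n-d_{L}$ so as to conclude that $S_{0}(V/L)$ is a prime spherical cone-manifold and that $S_{z}(\mathbb{P}(V))\cong S^{2d_{L}-3}\ast S_{0}(V/L)$. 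Your product model $\mathbb{C}_{\beta_{1}}\times\cdots\times\mathbb{C}_{\beta_{k}}\times\mathbb{C}^{n-k}$ handles only the reducible (normal-crossing) intersections, which is exactly the case where the join of irreducible pieces is a product; it does not produce the link at a deep irreducible stratum, which is the whole point of the induction.

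A lesser issue is that you assert, rather than prove, that the completion is the blow-down $\mathbb{P}(G\backslash V)$ and that the charts patch. The paper establishes this by showing the connected components of the fibers of the extended map $\mathbb{P}(V^{\sharp})\rightarrow A^{-}$ are compact and then applying the Stein factorization, which identifies the intermediate space with $\mathbb{P}(V)$ (contracting the $\mathbb{P}((V/L)^{\sharp})$ directions of each $E(L)$) and makes the extended developing map a local immersion up to $\Gamma$; only then does one get a genuine singular metric on $\mathbb{P}(V)$ whose restriction to $\mathbb{P}(V^{\circ})$ is the given one and whose topology is unchanged, hence the metric completion statement. You would need some substitute for this step before the tangent-cone analysis can even begin.
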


Before we proceed to the proof, we need to understand how the projective developing map extends near 
those members of $\mathscr{L}_{\mathrm{irr}}(\mathscr{H})$. 
For that we introduce a (so-called \emph{big}) resolution of $V$ (hence also of $\mathbb{P}(V)$). 

Our resolved space $V^{\sharp}$ is obtained by blowing up the members of $\mathscr{L}_{\mathrm{irr}}(\mathscr{H})$ 
in their natural partial order: we first blow up the dimension $0$ members, then the strict transform of the dimension $1$ members, 
and so on, until the strict transform of codimension $2$ members. We will identify $V^{\circ}$ with its preimage in $V^{\sharp}$. 
Notice that the action of the group $G$ can be naturally extended to $V^{\sharp}$.

Now every $L\in\mathscr{L}_{\mathrm{irr}}(\mathscr{H})$ determines an exceptional divisor $E(L)$ and all of these 
exceptional divisors form a normal crossing divisor in $V^{\sharp}$. Since $\mathscr{H}^{L}$ (resp. $\mathscr{H}_{L}$) 
defines an arrangement in $L$ (resp. $V/L$, and hence in $\mathbb{P}(V/L)$), 
the divisor $E(L)$ can be identified with $L^{\sharp}\times \mathbb{P}((V/L)^{\sharp})$ where $L^{\sharp}$ denotes 
the strict transform of $L$ until before blowing up itself (so for $\mathbb{P}((V/L)^{\sharp})$).
Moreover, the divisor $E(L)$ contains an open dense stratum $L^{\circ}\times\mathbb{P}(V/L)^{\circ}$, denoted by $E(L)^{\circ}$. 
We already know that the affine structure on $V^{\circ}$ degenerates 
infinitesimally simply along $E(L)^{\circ}$ (in the sense of Definition $\ref{def:degeneration}$) with 
logarithmic exponent $\kappa_{L}-1$, so that the behavior of the developing map near $E(L)^{\circ}$ can be understood 
in an explicit manner: assume $\kappa_{L}-1\notin\mathbb{Z}$ for the moment, 
write a point $z=(z_{0},z_{1})\in E(L)^{\circ}$, let $V_{z}^{\sharp}$ denote the germ of $V^{\sharp}$ at $z$ and so for $L_{z_{0}}$, 
etc., then there exist a submersion 
$F_{0}:V_{z}^{\sharp}\rightarrow L_{z_{0}}$, 
a submersion $F_{1}:V_{z}^{\sharp}\rightarrow T_{1}$ (with $T_{1}$ a vector space) and a defining equation $t$ for $E(L)_{z}^{\circ}$ 
such that $(F_{0},t,F_{1})$ is a chart for $V_{z}^{\sharp}$, and the developing map is affine equivalent to the map 
\[
(F_{0},t^{1-\kappa_{L}},t^{1-\kappa_{L}}F_{1}):V_{z}^{\sharp}\rightarrow L_{z_{0}}\times \mathbb{C}\times T_{1}.
\]

Furthermore, the divisors $E(L)$ also determine a stratification of $V^{\sharp}$. An arbitrary stratum can be described as follows: 
the intersection $\cap_{i}E(L_{i})$ is nonempty if and only if these members make up a flag: 
$L_{\bullet}:=L_{0}\subset L_{1}\subset \cdots \subset L_{k}\subset L_{k+1}=V$. Their intersection $E(L_{\bullet})$ 
can be identified with the product 
\[
E(L_{\bullet})=L_{0}^{\sharp}\times\mathbb{P}((L_{1}/L_{0})^{\sharp})\times\cdots\times\mathbb{P}((V/L_{k})^{\sharp}),
\]
and it contains an open dense stratum $E(L_{\bullet})^{\circ}$ decomposed as follows 
\[
E(L_{\bullet})^{\circ}=L_{0}^{\circ}\times\mathbb{P}((L_{1}/L_{0})^{\circ})\times\cdots\times\mathbb{P}((V/L_{k})^{\circ}).
\]
For a point $z=(z_{0},z_{1},\dots,z_{k+1})\in E(L_{\bullet})^{\circ}$, there exist a submersion 
$F_{0}:V_{z}^{\sharp}\rightarrow (L_{0})_{z_{0}}$, 
a submersion $F_{i}:V_{z}^{\sharp}\rightarrow T_{i}$ (with $T_{i}$ a vector space, identified with $\mathbb{P}(L_{i}/L_{i-1})_{z_{i}}$) 
and a defining equation $t_{i-1}$ of $E(L_{i-1})$ for each $i=1,\dots,k+1$ such that $\big(F_{0},(t_{i-1},F_{i})_{i=1}^{k+1}\big)$ is 
a chart for $V_{z}^{\sharp}$, and the developing map is affine equivalent to the map
\[
\Big(F_{0},\big(t_{0}^{1-\kappa_{0}}\cdots t_{i-1}^{1-\kappa_{i-1}}(1,F_{i})\big)_{i=1}^{k+1}\Big):V_{z}^{\sharp}\rightarrow (L_{0})_{z_{0}}
\times \prod_{i=1}^{k+1}(\mathbb{C}\times T_{i}),
\]
where we write $\kappa_{i}$ for $\kappa_{L_{i}}$.

Notice that the developing map would not extend unless we projectivize (when $1-\kappa_{L}<0$ for some $L$). 
So we will focus on the central exceptional divisor 
$E(\{0\})$, which records all the birational operations in the projective level descended from the birational morphism 
$V^{\sharp}\rightarrow V$. We thus 
denote this space by $\mathbb{P}(V^{\sharp})$. This resolved space $\mathbb{P}(V^{\sharp})$ for $\mathbb{P}(V)$ is a projective manifold. 
Notice that each $E(L)$ for $L\neq \{0\}$ meets $\mathbb{P}(V^{\sharp})$ in a smooth hypersurface of $\mathbb{P}(V^{\sharp})$, 
denoted by $D(L)$, and these hypersurfaces form a normal crossing divisor in 
$\mathbb{P}(V^{\sharp})$. It is clear that $\mathbb{P}(V^{\sharp})$ 
contains $\mathbb{P}(V^{\circ})$ as an open dense stratum. And a stratum of $\mathbb{P}(V^{\sharp})$ is given by a flag 
$L_{\bullet}$ as described above provided that $L_{0}=\{0\}$, (with this point of view, the exceptional 
divisor $D(L)^{\circ}$ is just the stratum $E(L_{\bullet})^{\circ}$ with the flag $L_{\bullet}:L_{0}=\{0\}\subset
L_{1}:=L\subset V$.) then the developing map at a point 
$z=(z_{1},\dots,z_{k+1})\in E(L_{\bullet})^{\circ}$ is linearly equivalent to the map 
\[
\big(t_{0}^{1-\kappa_{0}}\cdots t_{i-1}^{1-\kappa_{i-1}}(1,F_{i})\big)_{i=1}^{k+1}:V_{z}^{\sharp}\rightarrow 
\prod_{i=1}^{k+1}(\mathbb{C}\times T_{i}),
\]
since now $L_{0}$ is just a point.

For our purpose we need to consider the projective developing map restricted on $\mathbb{P}(V^{\sharp})$ (which can also 
be realized by letting $t_{0}=0$), then the preceding map is projectively equivalent to the map 
\begin{align}\label{eqn:Pev-mapping}
\Big[(1,F_{1}),\big(t_{1}^{1-\kappa_{1}}\cdots t_{i-1}^{1-\kappa_{i-1}}(1,F_{i})\big)_{i=2}^{k+1}\Big].
\end{align}

Under the assumption of Theorem $\ref{thm:parabolic}$: there admits a flat positive semidefinite Hermitian form on the tangent bundle 
of $V^{\circ}$, so that the Hermitian form endows $\mathbb{P}(V^{\circ})$ with a flat K\"{a}hler metric. 
Now let us find out what the metric completion of $\mathbb{P}(G\backslash V^{\circ})$, induced from $\mathbb{P}(V^{\circ})$, 
is for this case, which is required to be compatible with its affine structure induced by the Dunkl connection. 

For that we first need to introduce the slightly modified \emph{Stein factorization} for a complex (normal) analytic space, 
which will be later on applied to our $\mathbb{P}(G\backslash V^{\sharp})$ for all the three cases. Let us start from an 
easy version in the topological category.

\begin{definition}[Topological Stein factorization]
Let $f:X\rightarrow Y$ be a continuous map between topological spaces. The \emph{topological Stein factorization} of 
the map $f$ is the factorization through the quotient $X\rightarrow X_{f}$ of $X$ defined by the partition of $X$ 
into connected components of fibers of $f$. The second map is denoted $f_{\mathfrak{St}}: X_{f}\rightarrow Y$ and 
we usually write $X_{f}$ as $X_{\mathfrak{St}}$ if $f$ is understood. 
We call $X_{\mathfrak{St}}$ the \emph{Stein factor} of the factorization.
\end{definition}

So the first map $X\rightarrow X_{f}$ has connected fibers and the second map $f_{\mathfrak{St}}: X_{f}\rightarrow Y$ 
has discrete fibers in case the fibers of $f$ are locally connected. We have a useful criterion as follows for a 
complex-analytic counterpart.

\begin{lemma}[Stein Factorization Theorem, {\cite[Lemma 5.13]{Couwenberg-Heckman-Looijenga}}]\label{lem:Stein-factorization}
Let $f:X\rightarrow Y$ be a morphism of connected normal analytic spaces. Suppose that the connected components of the 
fibers of $f$ are compact. Then in the complex-analytic category the morphism $f$ admits a unique factorization 
\[
f:X\longrightarrow X_{\mathfrak{St}}\stackrel{f_{\mathfrak{St}}}\longrightarrow Y,
\]
through a connected normal analytic space $X_{\mathfrak{St}}$, 
such that $X\rightarrow X_{\mathfrak{St}}$ is a proper morphism with connected 
fibers to $X_{\mathfrak{St}}$ and $f_{\mathfrak{St}}$ is a morphism with discrete fibers. 
\end{lemma}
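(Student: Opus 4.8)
The plan is to deduce the statement from the classical Stein factorization for \emph{proper} holomorphic maps (Grauert--Remmert), the only new feature being that $f$ is merely assumed to have compact connected fiber-components rather than being proper. Accordingly I would first record the underlying set and topology of $X_{\mathfrak{St}}$ as the topological Stein factor introduced above, i.e. the quotient $q\colon X\to X_{\mathfrak{St}}$ by the partition of $X$ into connected components of fibers, with $f=f_{\mathfrak{St}}\circ q$; by construction $q$ has connected fibers and the fibers of $f_{\mathfrak{St}}$ are the sets of such components. The whole problem is then to equip $X_{\mathfrak{St}}$ with a normal complex-analytic structure making $q$ a proper holomorphic map and $f_{\mathfrak{St}}$ holomorphic with discrete fibers, and to do so canonically.

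The heart of the argument is a \emph{localization}: I would show that near each connected component $K$ of a fiber $f^{-1}(y)$ the map $f$ becomes proper after restriction. Since analytic fibers are locally connected, $K$ is open and closed in $f^{-1}(y)$, and being compact (by hypothesis) it can be separated: using local compactness of complex spaces one chooses a relatively compact open $U\supset K$ in $X$ with $\overline{U}\cap f^{-1}(y)=K$, so that $\partial U$ misses $f^{-1}(y)$. Then $f(\partial U)$ is a compact set not containing $y$, and on the open neighborhood $W:=Y\setminus f(\partial U)$ of $y$ the restriction
\[
g:=f|_{U\cap f^{-1}(W)}\colon U\cap f^{-1}(W)\longrightarrow W
\]
is proper, because $\partial U\cap f^{-1}(W)=\emptyset$ forces $g^{-1}(C)=\overline{U}\cap f^{-1}(C)$ to be compact for every compact $C\subset W$.

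To $g$, a proper map out of the normal open set $U\cap f^{-1}(W)\subset X$, I would apply the classical Stein factorization: $g$ factors through $Z:=\operatorname{Specan}(g_{*}\mathcal{O})$, with $U\cap f^{-1}(W)\to Z$ proper with connected fibers and $Z\to W$ finite, and $Z$ normal because $g_{*}\mathcal{O}$ is integrally closed, inherited from the normality of $X$. The connected fibers of the first map are exactly the connected components of the fibers of $g$, hence (near $K$) of $f$, so the underlying topological space of $Z$ matches the topological Stein factor over $W$; this transports the normal analytic structure of $Z$ onto a neighborhood of $q(K)$ in $X_{\mathfrak{St}}$. On overlaps these structures agree by the universal property of proper Stein factorization, so they glue to a global normal complex space $X_{\mathfrak{St}}$. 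Then $q$ is holomorphic and proper, since properness is local on the target and has just been verified in each local model, while its fibers are connected by construction. Likewise $f_{\mathfrak{St}}$ is holomorphic with discrete fibers, being locally the finite map $Z\to W$, and $f_{\mathfrak{St}}^{-1}(y)$ is the (necessarily discrete) set of compact components of $f^{-1}(y)$. Finally $X_{\mathfrak{St}}$ is connected as the continuous image of the connected space $X$.

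Uniqueness follows from the same universal property: any factorization $X\to X'\to Y$ with connected fibers for the first map and discrete fibers for the second must coincide set-theoretically with the topological Stein factor, and the requirement that $X\to X'$ be proper with connected fibers pins down $\mathcal{O}_{X'}=q_{*}\mathcal{O}_{X}$, hence the analytic structure, up to unique isomorphism. The one genuine obstacle is precisely the failure of global properness of $f$; everything hinges on the separation step that manufactures the relatively compact neighborhood $U$ and the proper restriction $g$, after which the classical theorem and the locality of properness on the base do the rest. Verifying the compatibility of the local models on overlaps, so that the glued structure sheaf is well defined and normal, is the only remaining point requiring care.
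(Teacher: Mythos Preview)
The paper does not give its own proof of this lemma: it is quoted verbatim from \cite[Lemma 5.13]{Couwenberg-Heckman-Looijenga} and used as a black box. So there is no in-paper argument to compare against; I can only assess your proposal on its own terms.

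Your approach is the standard one and is essentially correct. The key step---separating a compact connected fiber-component $K$ from the rest of $f^{-1}(y)$ by a relatively compact open $U$, and then observing that $\partial U\cap f^{-1}(W)=\varnothing$ forces $U\cap f^{-1}(W)=\overline{U}\cap f^{-1}(W)$, so that the restriction $g$ is proper---is exactly right, and it is worth making explicit (as you implicitly use) that this same equality shows $g^{-1}(y')$ is open \emph{and closed} in $f^{-1}(y')$ for every $y'\in W$, hence is a union of connected components of $f^{-1}(y')$. That is what guarantees the local analytic Stein factor $Z$ really agrees, as a set, with the topological Stein factor over $W$; without this the gluing would not make sense. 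You might also note that $g^{-1}(y)=\overline{U}\cap f^{-1}(y)=K$ is connected, so the fiber of $Z\to W$ over $y$ is a single point, which is what makes $q(K)$ isolated in $f_{\mathfrak{St}}^{-1}(y)$ and the fibers of $f_{\mathfrak{St}}$ genuinely discrete.

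The only place where a reader might want more detail is the gluing: two local models $Z$ and $Z'$ built from different choices $(U,W)$ and $(U',W')$ must be shown to agree on the overlap. Invoking ``the universal property of proper Stein factorization'' is the right idea, but you should say that the overlap $U\cap U'\cap f^{-1}(W\cap W')$ is open in both domains and that the classical Stein factor of a proper map is functorial for open restrictions on the base, so the two structure sheaves $q_{*}\mathcal{O}$ agree there. Once that is spelled out, the argument is complete.
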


The following lemma also applies to all the three cases (for the situation without boundary components).

\begin{lemma}\label{lem:compact-fiber}
The connected components of the fibers of the projective developing map from $\mathbb{P}(V^{\sharp})$ to the corresponding target space 
are compact (and hence so for the induced map from $\mathbb{P}(G\backslash V^{\sharp})$ to the target space).
\end{lemma}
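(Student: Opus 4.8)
The strategy is to work locally on the resolved space $\mathbb{P}(V^{\sharp})$, stratum by stratum, and argue by descending induction on the dimension of the stratum (equivalently, by the length of the flag $L_{\bullet}$), using the explicit form \eqref{eqn:Pev-mapping} of the (restricted) projective developing map near each stratum $E(L_{\bullet})^{\circ}$. Since $\mathbb{P}(V^{\sharp})$ is compact, it suffices to show that every point $z$ of $\mathbb{P}(V^{\sharp})$ has an open neighborhood $U_z$ on which the connected components of the fibers of the developing map are relatively compact in $\mathbb{P}(V^{\sharp})$; a standard compactness/partition-of-unity argument then globalizes this, and the statement for $\mathbb{P}(G\backslash V^{\sharp})$ follows because $G$ is finite and the map descends.

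First I would dispose of the generic stratum: on $\mathbb{P}(V^{\circ})$ the developing map is a local isomorphism (it is the projectivization of the evaluation map $ev$, a local affine isomorphism), so its fibers are discrete and the claim is trivial. Next, near a point $z = (z_1,\dots,z_{k+1}) \in E(L_{\bullet})^{\circ}$ with $L_0 = \{0\}$, I would read off from \eqref{eqn:Pev-mapping} that the developing map, in the chart $\big(F_0,(t_{i-1},F_i)_{i=1}^{k+1}\big)$, is projectively equivalent to $\big[(1,F_1),\,(t_1^{1-\kappa_1}\cdots t_{i-1}^{1-\kappa_{i-1}}(1,F_i))_{i=2}^{k+1}\big]$. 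The key observation is that the coordinates $t_i$ (defining equations of the exceptional divisors $E(L_i)$) do \emph{not} appear freely in the image — only the products $t_1^{1-\kappa_1}\cdots t_{i-1}^{1-\kappa_{i-1}}$ do — whereas the $F_i$, being submersions onto the factors $T_i \cong \mathbb{P}(L_i/L_{i-1})_{z_i}$, together with the $t_i$ form a full coordinate system. Thus a fiber of the developing map, intersected with a small polydisc neighborhood $U_z$, is cut out by fixing the values of $F_1$ and of the monomials $t_1^{1-\kappa_1}\cdots t_{i-1}^{1-\kappa_{i-1}}(1,F_i)$; for fixed values of these, the locus of admissible $(t_1,\dots,t_{k+1})$ is (a branch of) an analytic subvariety of the closed polydisc defined by monomial equations, which is compact, and the remaining $F_i$ are then determined. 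So each connected component of $(\text{fiber}) \cap U_z$ is a closed analytic subset of the \emph{closed} polydisc, hence compact. Choosing $U_z$ to be the interior of a slightly smaller closed polydisc whose closure still lies in the chart, these components are relatively compact in $\mathbb{P}(V^{\sharp})$.

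The main obstacle is the bookkeeping when several of the logarithmic exponents $\kappa_{L_i} - 1$ are integers (the case excluded "for the moment" in the derivation of \eqref{eqn:Pev-mapping}): then the developing map acquires logarithmic terms $\log t_i$ in place of, or alongside, the pure powers $t_i^{1-\kappa_i}$, and one must check that the fibers of such a modified monomial-times-log map are still compact in a polydisc. This is true — the level sets of $t^a$ and of $t^a \log t$ are equally well-behaved near $t = 0$ after passing to a component, since $\log t \to \infty$ forces $t \to 0$ and the closure of a component is still analytic/proper over the base — but it requires a careful local normal form; I would handle it by the same argument applied to the appropriate multivalued chart on the $\Gamma$-cover $\widehat{V^{\circ}}$ and then descending. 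A secondary point to check is that "connected component of a fiber" behaves well under the restriction to $U_z$: one wants that shrinking the neighborhood does not disconnect a component in a way that spoils relative compactness, which is why it is cleanest to first establish compactness of components in a fixed closed polydisc and only then pass to its interior. With these local statements in hand, compactness of $\mathbb{P}(V^{\sharp})$ gives a finite cover, and the global conclusion — together with the $G$-equivariant version — follows immediately, which is exactly what is needed to invoke Lemma~\ref{lem:Stein-factorization} in the proof of Theorem~\ref{thm:parabolic}.
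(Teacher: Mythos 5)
Your local analysis near a stratum point is in roughly the right spirit, but the globalization step is a genuine gap. Knowing that every point has a neighborhood $U_z$ in which the connected components of $(\text{fiber})\cap U_z$ are compact does \emph{not} imply that the connected components of the global fibers are compact: a connected component of a global fiber of a multivalued map is assembled by analytically continuing local plaques across overlapping charts, and such a union of compact plaques can be non-compact and non-closed even in a compact ambient space --- the leaves of an irrational linear foliation of a torus are the standard counterexample (each plaque is a compact segment in its chart, yet every leaf is dense and non-compact). Compactness of $\mathbb{P}(V^{\sharp})$ plus a ``partition-of-unity argument'' does not repair this; the multivaluedness of the developing map is precisely what makes the global behaviour of its fiber components non-obvious. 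A secondary inaccuracy: at a point of $E(L_{\bullet})^{\circ}$ the monomials $t_{1}^{1-\kappa_{1}}\cdots t_{i-1}^{1-\kappa_{i-1}}$ vanish, so the values of $F_{i}$ for $i\geq 2$ are \emph{not} determined on the fiber --- the local fibers are positive-dimensional there, which is the whole reason a Stein factorization is needed afterwards.

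The paper closes exactly this gap by a global containment argument rather than a local-to-global one: it observes that the connected component of a fiber through $z\in E(L_{\bullet})^{\circ}$ lies entirely inside a single fiber $\{z_{r}\}\times\mathbb{P}((V/L_{r})^{\sharp})$ of the globally defined product projection $D(L_{r})=\mathbb{P}(L_{r}^{\sharp})\times\mathbb{P}((V/L_{r})^{\sharp})\rightarrow\mathbb{P}(L_{r}^{\sharp})$ when $\kappa_{r}<1$ (resp.\ of the projection onto the other factor when $\kappa_{r}>1$), because the ``essential'' coordinate of the developing map near that stratum is exactly this single-valued projection. That projection fiber is a compact projective manifold, and the connected component of the developing-map fiber is an irreducible closed analytic subset of it (smoothness of the fiber along the stratum gives irreducibility), so compactness follows at once. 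To salvage your approach you would need to produce such a globally defined, single-valued holomorphic map with compact fibers that is constant on the fiber components of the developing map; the local normal form alone does not suffice. (Your concern about integer exponents and logarithmic terms is tangential here: the lemma is only invoked in the situation without boundary components, and in the parabolic case all exponents $\kappa_{L}-1$ lie in $(-1,0)$, while in the other cases the flat Hermitian form rules out the resonant values.)
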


\begin{proof}
Over $\mathbb{P}(V^{\circ})$ the projective developing map is locally finite and hence the claim on these points is clear. 
So let us examine the situation over points on some other stratum $E(L_{\bullet})^{\circ}$. Since the stratum is not open, 
we have $k\geq 1$. We observe that for any $r$ such that $1\leq r \leq k$ the connected component of a fiber 
through $z$ lies in the fiber over $z_{r}\in\mathbb{P}(L_{r}^{\circ})$ of the projection 
\[
D(L_{r})=\mathbb{P}(L_{r}^{\sharp})\times\mathbb{P}((V/L_{r})^{\sharp})\rightarrow\mathbb{P}(L_{r}^{\sharp}) 
\quad \text{if} \ \kappa_{r}<1,
\]
or lies over $z_{r+1}\in\mathbb{P}((V/L_{r})^{\circ})$ of the projection 
\[
D(L_{r})=\mathbb{P}(L_{r}^{\sharp})\times\mathbb{P}((V/L_{r})^{\sharp})\rightarrow\mathbb{P}((V/L_{r})^{\sharp}) 
\quad \text{if} \ \kappa_{r}>1.
\]
(The latter one will be more clear when we deal with the elliptic case.) 
We notice that the fiber is smooth at any $z$ over that stratum $E(L_{\bullet})^{\circ}$, hence a connected component of 
that fiber is also irreducible. 
Then it is easy to see that every irreducible component in $D(L_{r})$ 
over $\{z_{r}\}\times\mathbb{P}((V/L_{r})^{\sharp})$ (when $\kappa_{r}<1$) 
or over $\mathbb{P}(L_{r}^{\sharp})\times\{z_{r+1}\}$ (when $\kappa_{r}>1$) is compact.
\end{proof}

We now establish the metric completion of $\mathbb{P}(G\backslash V^{\circ})$.

\begin{proposition}
In this case, the metric completion of $\mathbb{P}(G\backslash V^{\circ})$ is $\mathbb{P}(G\backslash V)$. 
Moreover, the stratum $\mathbb{P}(G\backslash L^{\circ})$ is totally geodesic in $\mathbb{P}(G\backslash V)$.
\end{proposition}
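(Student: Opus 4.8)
The plan is to combine the explicit local normal form of the (restricted) projective developing map near each stratum $E(L_\bullet)^\circ$ with the Stein factorization machinery of Lemma~\ref{lem:Stein-factorization}, using Lemma~\ref{lem:compact-fiber} to check its hypothesis. First I would observe that since $\kappa_0=1$, the projective developing map is genuinely ``restricted'' in the sense described after \eqref{eqn:Pev-mapping}: on $\mathbb{P}(V^\sharp)$ it is given by \eqref{eqn:Pev-mapping}, and one checks directly from that formula that, because every $\kappa_H\in(0,1)$ forces $\kappa_L<1$ for all irreducible $L\neq\{0\}$ (the inclusion-reverse property with $\kappa_0=1$ pins all the intermediate exponents strictly below $1$), each factor $t_j^{1-\kappa_j}\to 0$ as $t_j\to 0$. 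Hence the map \eqref{eqn:Pev-mapping} extends continuously across the full exceptional divisor $\cup_L D(L)$, and on the open stratum $D(L_\bullet)^\circ$ the extended map collapses the $\mathbb{P}((V/L_k)^\circ)$ direction and more generally all the ``higher'' projective factors, so that its image is the developing image of the longitudinal system on $L_0=\{0\}$… — i.e. up to the $G$-action it realizes $\mathbb{P}(V^\circ)\sqcup(\text{strata})$ as mapping onto $\mathbb{P}(V)$.

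Next I would run the Stein factorization. Apply Lemma~\ref{lem:Stein-factorization} to the extended projective developing map $\mathbb{P}(G\backslash V^\sharp)\to A_{\mathbb{P}}$ (the projective target), whose fibers have compact connected components by Lemma~\ref{lem:compact-fiber}; this yields a factorization through a normal analytic space $(\mathbb{P}(G\backslash V^\sharp))_{\mathfrak{St}}$ with the first map proper with connected fibers and the second with discrete fibers, hence a local isomorphism onto its image where the target is smooth. The key point is then to identify the Stein factor. On each stratum $D(L_\bullet)^\circ$ with flag $\{0\}\subset L_1\subset\cdots\subset L_k\subset V$, the local form of the map shows that the connected component of the fiber through $z$ is exactly the fiber $\{z_1\}\times\mathbb{P}((V/L_1)^\sharp)$ of the projection $D(L_1)=\mathbb{P}(L_1^\sharp)\times\mathbb{P}((V/L_1)^\sharp)\to\mathbb{P}(L_1^\sharp)$ (this is where $\kappa_1<1$ is used, as in Lemma~\ref{lem:compact-fiber}); contracting these fibers over all strata is precisely the composite birational morphism $\mathbb{P}(V^\sharp)\to\mathbb{P}(V)$, and it descends to $\mathbb{P}(G\backslash V^\sharp)\to\mathbb{P}(G\backslash V)$ since $G$ permutes the $E(L)$ compatibly with inclusions. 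By uniqueness of the Stein factor, $(\mathbb{P}(G\backslash V^\sharp))_{\mathfrak{St}}\cong\mathbb{P}(G\backslash V)$, and the induced map $\mathbb{P}(G\backslash V)\to A_{\mathbb{P}}$ is a local isomorphism onto its image away from the images of positive-codimension strata. Pulling back the flat Kähler (Fubini–Study-type, here flat) metric through this local isomorphism gives a length metric on $\mathbb{P}(G\backslash V)$ agreeing with $g$ on $\mathbb{P}(G\backslash V^\circ)$; since $\mathbb{P}(G\backslash V)$ is compact and contains $\mathbb{P}(G\backslash V^\circ)$ as a dense open subset, it is a (the) metric completion of the latter.

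For the ``totally geodesic'' assertion I would argue stratum by stratum, working in the local model $(\ref{eqn:Pev-mapping})$ restricted to a chart around a point $z\in\mathbb{P}(G\backslash L^\circ)$ where $L=L_1$ is irreducible and $k=1$. There the developing map is affine equivalent to $\big[(1,F_1),\,t_1^{1-\kappa_1}(1,F_2)\big]$, and setting $t_1=0$ one sees that $\mathbb{P}(G\backslash L^\circ)$ maps into a linear (projective-linear) subspace of the target $A_{\mathbb{P}}$ cut out by the vanishing of the last block of coordinates; linear subspaces are totally geodesic for the flat/Fubini–Study model metrics $\eqref{eqn:zero}$ (equivalently $\eqref{eqn:positive}$), and the property is local and preserved under the local isometry. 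More conceptually: the $L$-longitudinal Dunkl connection $\nabla^L$ is the restriction of $\nabla^\kappa$ in the sense made precise in Section~\ref{sec:Dunkl-system}, so the affine structure on $L^\circ$ is a sub-affine-structure and the metric completion statement applies inductively to it; the inclusion $\mathbb{P}(G\backslash L)\hookrightarrow\mathbb{P}(G\backslash V)$ is then the inclusion of the completion of a totally geodesic submanifold, which is totally geodesic. I expect the main obstacle to be the bookkeeping in the last paragraph — verifying that the local normal form genuinely identifies $\mathbb{P}(G\backslash L^\circ)$ with a \emph{totally geodesic} (not merely smooth) subset, which requires checking that the second fundamental form vanishes, i.e. that geodesics of the ambient metric starting tangent to the stratum stay in it; this follows from the linear-subalgebra description above but needs the local isometry with the curvature model $\eqref{eqn:zero}$ to be invoked carefully near the singular divisor.
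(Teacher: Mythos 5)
Your proposal is correct and follows essentially the same route as the paper: use the local normal form of the restricted developing map (with $\kappa_L<1$ for all $L\neq\{0\}$ forced by $\kappa_0=1$) to extend it across the exceptional divisors, apply Lemma~\ref{lem:compact-fiber} and the Stein factorization of Lemma~\ref{lem:Stein-factorization} to identify the Stein factor with $\mathbb{P}(G\backslash V)$ as the inverse of the blow-ups, pull back the flat metric, and verify total geodesy by setting $t=0$ in the curvature model. The only cosmetic differences are your phrasing of the collapsed image in terms of the longitudinal system on $L_0$ (it should be $\mathbb{P}(L_1^\circ)$) and your handling of the $G$-action via equivariance of the contraction rather than the paper's remark that the extended map is constant on $G$-orbits; neither affects the argument.
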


\begin{proof}
We first assume that $G$ is trivial. Since $\kappa_{0}=1$, the holonomy of the connection around the origin 
is a translation so that the developing map factors through a (multivalued) local isomorphism from $\mathbb{P}(V^{\circ})$ to 
an affine hyperplane, denoted by $A^{-}$, in $A=\mathbb{C}^{n+1}$. 
The translation generates a faithful action $\mathbb{C}^{+}$ on $A$ 
so that the space $A^{-}$ can be identified with $\mathbb{C}^{+}\backslash A$. 
By Theorem $\ref{thm:hermitian-forms}$ we know that the kernel of the Hermitian form 
is generated by the Euler field, so that $\mathbb{P}(V^{\circ})$ inherits a positive definite Hermitian form by restriction.

Also since $\kappa_{0}=1$, by the monotonicity of the exponents we have that $\kappa_{L}<1$ for all $L\in \mathscr{L}_{\mathrm{irr}}
(\mathscr{H})-\{0\}$. So the ``projective" developing map near a point of $D(L)^{\circ}$ is equivalent to the map with coordinates 
$[F_{0},t^{1-\kappa_{L}},t^{1-\kappa_{L}}F_{1}]=(\check{F_{0}},t^{1-\kappa_{L}},t^{1-\kappa_{L}}F_{1})$, 
which is essentially given by $\check{F_{0}}$. 
We note that here the projectivization is simply a restriction on an affine hyperplane in $A$, 
since the induced projective structure is just an affine structure taken from the affine hyperplane. 
So in this sense the map $\check{F_{0}}$ simply means the map $F_{0}$ with the component representing the 
direction of the Euler field being removed. For the same reason, therefore, the ``projective" developing map ($\ref{eqn:Pev-mapping}$) 
near a point of $E(L_{\bullet})^{\circ}$ is equivalent to the map with coordinates 
\[
\Big[(1,F_{1}),\big(t_{1}^{1-\kappa_{1}}\cdots t_{i-1}^{1-\kappa_{i-1}}(1,F_{i})\big)_{i=2}^{k+1}\Big]
=\Big(F_{1},\big(t_{1}^{1-\kappa_{1}}\cdots t_{i-1}^{1-\kappa_{i-1}}(1,F_{i})\big)_{i=2}^{k+1}\Big)
\]
which is essentially given by $F_{1}$.

This tells us that the extension across $E(L_{\bullet})^{\circ}$ by the developing map establishes a multivalued map 
from $\mathbb{P}(V^{\sharp})$ to the metrically complete space $A^{-}$. 
Observe that the space $A^{-}$ has a $\Gamma$-invariant positive definite Hermitian form: if the Hermitian form on $A$ is 
semipositive definite then it is clear; if the Hermitian form on $A$ is positive definite, then we can identify $A^{-}$ 
with the orthogonal complement of the translation of $A$. 
This extended map is certainly not injective near a point $z$ on $D(L)^{\circ}=\mathbb{P}(L^{\circ})\times\mathbb{P}((V/L)^{\circ})$ 
even modulo the action of the local monodromy group $\Gamma_{z}$, 
but by Lemma $\ref{lem:compact-fiber}$ we notice that 
the fiber over a point of $\mathbb{P}(L^{\circ})$ is compact. So that we can invoke the Stein factorization 
(Lemma $\ref{lem:Stein-factorization}$), for $\mathbb{P}(V^{\sharp})\rightarrow A^{-}$, to get an 
intermediate space, the Stein factor, denoted by $\mathbb{P}(V^{\natural})$, 
such that the differential of the extended map from $\mathbb{P}(V^{\natural})$ to $A^{-}$ at $z$ is injective with the 
transition maps (between charts around $z$) lying in $\Gamma_{z}$. 
This shows that the extended map is a local immersion up to $\Gamma$. 
Moreover, this factorization is realized by a 
projection $D(L)^{\circ}\rightarrow \mathbb{P}(L^{\circ})$, which means the transversal direction of $\mathbb{P}(L^{\sharp})$ for 
each $D(L)$ gets contracted. Since this is just the inverse operation for the blowup of each $L$, 
then we get back to the central exceptional divisor $\mathbb{P}(V)$, which gives the $\mathbb{P}(V^{\natural})$ for this case. 
It is clear that the stratum 
$E(L_{\bullet})^{\circ}$ determined by a flag $L_{\bullet}$ gets contracted to $\mathbb{P}(L_{1}^{\circ})$. 

Then $\mathbb{P}(V)$ is endowed with a (singular) flat K\"{a}hler metric pulled back from $A^{-}$ 
since the extended developing map is locally immersive up to $\Gamma$. 
On $\mathbb{P}(V^{\circ})$, it is clear that the inherited flat K\"{a}hler metric from $V^{\circ}$ agrees with the 
pulled back metric from $A^{-}$ since the developing map is a local isomorphism. 
Furthermore, we notice that this induced metric would not alter the original topology on $\mathbb{P}(V)$ 
(this is because the metric still varies over $\mathbb{P}(V)$ in a continuous manner) 
so that $\mathbb{P}(V)$ is compact and $\mathbb{P}(V^{\circ})$ is dense in $\mathbb{P}(V)$. 
This implies that $\mathbb{P}(V)$ is the metric completion of $\mathbb{P}(V^{\circ})$. 

We write the dimension of $L$ as $d_{L}$. 
Near a point of $\mathbb{P}(V^{\circ})$, its local curvature model is given by 
$\sqrt{-1}\partial\bar{\partial}(\sum_{i=1}^{n}|z_{i}|^{2})$ 
if we let $(z_{0},z_{1},\dots,z_{n})$ stand for an affine chart near a point of $V^{\circ}$ and 
let $z_{0}$ represent the direction of the Euler field. 
Now given a stratum $\mathbb{P}(L^{\circ})$, near a point of it, 
since the Hermitian form is flat for the Dunkl connection and $\Gamma$-invariant, 
we can substitute those coordinates in the curvature model by the new affine structure 
$(\check{F_{0}},t^{1-\kappa_{L}},t^{1-\kappa_{L}}F_{1})$ where $\check{F_{0}}$ can be understood as $(z_{1},\dots,z_{d_{L}-1})$, 
then we get a curvature model near the given point 
on $\mathbb{P}(L^{\circ})$ as $\sqrt{-1}\partial\bar{\partial}(\sum_{i=1}^{d_{L}-1}|z_{i}|^{2})$ by letting $t=0$. 
This shows that the stratum $\mathbb{P}(L^{\circ})$ is totally geodesic in $\mathbb{P}(V)$.

Now suppose the Schwarz symmetry group $G$ is not trivial, then it is actually generated by a so-called 
\emph{Schwarz rotation group} $G_{L}$ for each $L\in\mathscr{L}_{\mathrm{irr}}(\mathscr{H})$. 
The group $G_{L}$ is a finite subgroup of $U(V)$ fixing $L$ and acting on $L^{\perp}$ as scalar multiplication by 
$p_{L}$th roots of unity (if we write $p_{L}:=|G_{L}|$) such that the Dunkl system is invariant under the action of $G_{L}$. 
It implies that the extended developing map is a constant on the $G$-orbits. Therefore, the action of $G$ would not affect the 
metric completeness of $\mathbb{P}(V)$ or that each stratum being totally geodesic 
in the metric completion.

The proof of the proposition is now complete.
\end{proof}

\begin{proof}[Proof of Theorem $\ref{thm:parabolic}$]
It remains to show that the metric is of a conical type. We still first deal with the situation when $G$ is trivial. 
Note that for a flat metric the tangent cone at a given point just gives 
a local metric model near that point. This is because the exponential map is simply an identity map in this case. 
So for this case it is enough 
to describe the conical structure on the tangent cone in order to show that this singular metric is a cone metric. 

For that we now give an inductive description. For $L\in \mathscr{L}_{\mathrm{irr}}(\mathscr{H})$ of 
complex codimension $1$, namely, $L$ is just an $H$ in $\mathscr{H}$, near a point $z$ of $\mathbb{P}(H^{\circ})$, according to the 
constant zero curvature model ($\ref{eqn:zero}$), we have a curvature model 
$\sqrt{-1}\partial\bar{\partial}(\sum_{i=1}^{n-1}|z_{i}|^{2}+|z_{n}|^{2(1-\kappa_{H})})$ near that point, which shows that the tangent cone 
at that point is isomorphic to $\mathbb{C}^{n-1}\times \mathbb{C}_{1-\kappa_{H}}$. This is clearly a flat cone metric 
with cone angle $2\pi(1-\kappa_{H})$ along $\mathbb{P}(H^{\circ})$. 
In other words, it also shows that the tangent cone is just a flat metric cone over 
the unit tangent cone $S_{z}(\mathbb{P}(V))\cong S^{2n-3}*N_{z}(\mathbb{P}(V))$ such that the cone over $S^{2n-3}$ is totally geodesic 
and the normal cone $N_{z}(\mathbb{P}(V))$ is a spherical cone-manifold. 
For $L\in \mathscr{L}_{\mathrm{irr}}(\mathscr{H})$ of complex codimension $n+1-d_{L}$, 
the $\mathscr{H}_{L}$ defines a projective arrangement on the central exceptional divisor of $V/L$, i.e., $\mathbb{P}(V/L)$, 
we assume that $\mathbb{P}(V/L)$ is a complex $(n-d_{L})$-dimensional cone-manifold. For a point $z$ on $\mathbb{P}(L)^{\circ}$, its 
tangent cone is the direct product of a flat $\mathbb{C}^{d_{L}-1}$ and a singular $\mathbb{C}^{n+1-d_{L}}_{sing}$, 
where the latter stands for the singular structure on $V/L$ induced by $\mathscr{H}_{L}$. 
We know that the affine structure at $(V/L)_{0}$ is given by $(t^{1-\kappa_{L}},t^{1-\kappa_{L}}F_{1})=t^{1-\kappa_{L}}(1,F_{1})$. 
So the normal cone $N_{z}(\mathbb{P}(V))$ at $z$ is just the unit tangent cone of $V/L$ at $0$, i.e., $S_{0}(V/L)$, which is a 
principal circle bundle over $\mathbb{P}(V/L)$. This implies that for a point $x$ on $S_{0}(V/L)$, its unit tangent cone $S_{x}(S_{0}(V/L))$ 
is isomorphic to the join $S^{0}*S_{y}(\mathbb{P}(V/L))$ where $y$ is obtained by projecting $x$ to $\mathbb{P}(V/L)$. 
Hence it shows that the normal cone $N_{z}(\mathbb{P}(V))=S_{0}(V/L)$ is a prime spherical cone-manifold 
since the unit tangent cone at each point $x$ of $N_{z}(\mathbb{P}(V))$ is a spherical cone-manifold. 
So the tangent cone $T_{0}(V/L)$ of $V/L$ at $0$ is just a flat metric cone 
over $S_{0}(V/L)$ such that the length of each circle over a generic point of $\mathbb{P}(V/L)$ becomes $2\pi(1-\kappa_{L})$. 
Therefore, the tangent cone at $z$ of 
$\mathbb{P}(V)$ is a flat metric cone over the join $S^{2d_{L}-3}*N_{z}(\mathbb{P}(V))$ such that the cones over the strata 
in singular loci are totally geodesic. This shows that the metric near $z$ is locally 
isometric to a flat cone metric.

For a member $L\in \mathscr{L}(\mathscr{H})$ reducible, since a normal 
fiber of $\mathbb{P}(L^{\circ})$ in $\mathbb{P}(V)$ can be decomposed as a product of several $V/L'$'s for $L'\supset L$ irreducible, 
its tangent cone is just a direct product of a flat $\mathbb{C}^{d_{L}-1}$ and tangent cones at $0$ of $V/L'$ for each $L'$. 
Or equivalently its tangent cone is just a flat metric cone over the join of their unit tangent cones. This is certainly 
locally isometric to a flat cone metric since we already show that each unit tangent cone is a spherical cone-manifold 
for $L'$ irreducible. 
Therefore, we finish the inductive description which implies that $\mathbb{P}(V)$ is a parabolic (i.e., 
$(\mathbb{C}^{n},\mathrm{U}(n))-$) cone-manifold.

Now suppose the Schwarz symmetry group $G$ is not trivial, then we know that each Schwarz rotation group $G_{L}$ 
fixes $L$ and acts on $L^{\perp}$ as scalar multiplication by $p_{L}$th roots of unity such that the Dunkl system 
is invariant under the action of $G_{L}$. 
Hence the developing map near a point $z$ of $E(L)^{\circ}$ factors through $G_{L}\backslash V^{\sharp}_{z}$. 
Since $G_{L}$ acts on $V/L$ as (complex) scalar multiplication, it induces a free action on the unit tangent cone $S_{0}(V/L)$ 
of $V/L$ at $0$. So the unit tangent cone $S_{0}(G_{L}\backslash (V/L))$ is still a prime spherical cone-manifold. 
Then the tangent cone at $z$ of 
$\mathbb{P}(G\backslash V)$ is still a flat metric cone over the join $S^{2d_{L}-3}*N_{z}(\mathbb{P}(G\backslash V))$.  
Under this situation we only need to notice that for $L$ of complex codimension $1$, i.e., 
$L$ is a hyperplane $H\in\mathscr{H}$, the cone angle $2\pi(1-\kappa_{H})$ needs to be replaced by $2\pi(1-\kappa_{H})/p_{H}$ 
due to the action of $G_{H}$, since in this level $G_{H}$ acts as like the Euler field for $\nabla_{H}$ on $V/H$.  
The situation for $L$ reducible is easy to follow by applying the same argument as for $G$ trivial.

In addition, there might also exist some stratum in $\mathbb{P}(V)$ on which the action of $G$ is not trivial while 
$G$ acts on its preimage in $V$ trivially. This is due to the projectivization. 
We consider one of those strata in $\mathbb{P}(V)$, say $K$ for instance, so $G$ induces a subgroup which fixes $K$ and 
acts on the transversal direction of $K$ as a rotation. 
Then it still acts freely on the normal cone at a point of the stratum, 
so we can apply the same argument as above to $K$ to get the conical structure along $K$. 
Therefore, we can also conclude that the singular metric is still of a conical type near a point of the given stratum. 

This completes the proof.
\end{proof}

\begin{remark}
In the sense of Subsection $\ref{subsec:link}$, for $L\in \mathscr{L}_{\mathrm{irr}}(\mathscr{H})$ in this case, 
the space $G\backslash\mathbb{P}(V/L)$ is just the complex link of the stratum $\mathbb{P}(L^{\circ})$, and the space of  
real rays of $G\backslash (V/L)$ is just the real link of the stratum which is a Seifert fiber space over the 
complex link with generic fiber a circle having length $2\pi(1-\kappa_{L})$. This length is the 
scalar cone angle at $\mathbb{P}(L^{\circ})$. The number $1-\kappa_{L}$ is also the dilatation factor of the Euler field 
on the Dunkl system $V/L$ with the arrangement $\mathscr{H}_{L}$. These notions also apply to the following elliptic and hyperbolic cases.
\end{remark}

We now relate our $(\mathbb{C}^{n},\mathrm{U}(n))-$cone-manifold to the \emph{polyhedral K\"{a}hler manifold} introduced 
by Panov in \cite{Panov}. Recall from \cite[Section 1]{Panov} that a piecewise linear connected manifold $M$ with a fixed 
simplicial decomposition is called a \emph{polyhedral manifold} if we can choose a flat metric on every simplex of highest 
dimension in such a way that every two simplices having a common face are glued by an isometry. Note that the existence of such a 
polyhedral metric is independent of the choice of the simplicial decomposition. 
Then a polyhedral manifold $M$ of real dimension $2n$ is called a \emph{polyhedral K\"{a}hler manifold} if the holonomy of its 
metric belongs to a subgroup of $\mathrm{SO}(2n)$ conjugate to $\mathrm{U}(n)$ and every real codimension $2$ face with 
cone angle $2m\pi$ ($m\geq 2,m\in\mathbb{N}$) has a holomorphic direction.

\begin{theorem}
A complete $(\mathbb{C}^{n},\mathrm{U}(n))-$cone-manifold $M$, assuming the cone angles along real codimension 2 strata are 
$\leq 2\pi$, is a polyhedral K\"{a}hler manifold.
\end{theorem}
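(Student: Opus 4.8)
The plan is to show that a complete $(\mathbb{C}^{n},\mathrm{U}(n))$-cone-manifold $M$ with the cone-angle restriction along codimension 2 strata is genuinely polyhedral, i.e. admits a locally finite decomposition into flat Euclidean simplices glued by isometries, and then to verify the two defining properties of a polyhedral K\"ahler manifold (holonomy in $\mathrm{U}(n)$ and the holomorphicity condition on large-angle codimension 2 faces). First I would recall the stratification $M=\bigsqcup_{k}M[k]$ from Section \ref{sec:cone-manifolds}, and argue by reverse induction on the codimension $n-k$ of the strata that around each point $x\in M[k]$ the local model is not merely a flat cone but a flat \emph{polyhedral} cone: since $M$ is an $(\mathbb{C}^{n},\mathrm{U}(n))$-cone-manifold, the local model at $x$ is the Euclidean metric cone $C_g(S_xM)$ over a compact spherical cone-manifold $S_xM$, and by the factorization theorem (\cite[Theorem 5.1]{McMullen}) we have $S_xM\cong S^{k-1}*N_xM$ with $N_xM$ prime. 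The normal cone $N_xM$ is a spherical cone-manifold of dimension $2(n-k)-1$, so by the inductive hypothesis applied to the (lower-dimensional) metric cone $C_g(N_xM)$ it is a flat polyhedral cone; joining with $S^{k-1}$ and taking the Euclidean cone preserves the polyhedral structure (a cone over a join of polyhedra is an iterated polyhedral cone). The base case $k=n$ is trivial since $M[n]$ is a flat Riemannian manifold, and $k=n-1$ is empty. This gives a local polyhedral structure near every point; local finiteness together with completeness (so that closed balls are compact) lets one patch these into a global simplicial decomposition with flat simplices glued by isometries — exactly the definition of a polyhedral manifold. The underlying PL structure is the canonical smooth structure on the manifold $M$, compatible across charts because the transition maps lie in $\mathrm{U}(n)\subset\mathrm{SO}(2n)$, which are affine isometries.

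Next I would check the holonomy condition. The metric is flat on the open dense stratum $M[n]$, and the transition functions of the $(\mathbb{C}^{n},\mathrm{U}(n))$-structure lie in $\mathrm{U}(n)$ acting on $\mathbb{C}^{n}$; hence parallel transport along any loop in $M[n]$ (equivalently, in the complement of the codimension $\geq 2$ singular locus, which carries the fundamental group since removing codimension $\geq 2$ does not change $\pi_1$) is realized by an element of $\mathrm{U}(n)$. Thus the holonomy group of the polyhedral metric is contained in the copy of $\mathrm{U}(n)$ inside $\mathrm{SO}(2n)$ determined by the complex structure. That is precisely Panov's requirement that the holonomy lie in a subgroup of $\mathrm{SO}(2n)$ conjugate to $\mathrm{U}(n)$.

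Then I would address the holomorphicity condition on real codimension 2 faces with cone angle $2m\pi$, $m\geq 2$. Here the hypothesis ``cone angles along real codimension 2 strata are $\leq 2\pi$'' enters decisively: a codimension 2 \emph{stratum} $M[n-1]$ does not exist, but codimension 2 \emph{faces} of the polyhedral decomposition can occur in the interior of $M[n]$ or along strata $M[k]$ with $k\leq n-2$. Along the smooth part $M[n]$ the metric is flat and smooth, so any codimension 2 face there lies in the smooth locus and has cone angle exactly $2\pi$ — the condition $m\geq 2$ never triggers there. A codimension 2 face contained in a deeper stratum $M[k]$, $k\leq n-2$, is a face of the polyhedral cone $C_g(S^{k-1}*N_xM)$; by the inductive product description obtained above, its transverse geometry is a flat 2-dimensional cone $\mathbb{C}_\beta$ appearing as one Euclidean factor of the tangent cone $\mathbb{C}_{\beta_1}\times\cdots\times\mathbb{C}^{n-\ell}$, and the cone-angle hypothesis forces the relevant cone angle to be $\leq 2\pi$, so again $m\geq 2$ does not occur. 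Hence the holomorphicity condition is satisfied \emph{vacuously}: there are no real codimension 2 faces with cone angle $\geq 4\pi$. (If one wished to state the result without the angle restriction, one would instead have to argue that the $\mathbb{C}_\beta$-factor with $\beta\geq 2$ is a holomorphic direction, which is automatic because the tangent cone decomposition is through \emph{complex} linear factors $\mathbb{C}_{\beta_i}$; so in fact the condition holds in general, but the angle restriction makes the verification immediate.)

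The main obstacle is the global patching step: ensuring that the locally defined polyhedral (simplicial) structures, produced cone-point by cone-point via the inductive factorization, can be assembled into one \emph{coherent} simplicial decomposition of all of $M$ with flat simplices meeting along isometric faces. Locally the structure is canonical (each tangent cone has a preferred decomposition into iterated polyhedral subcones indexed by the faces of the spherical complex $S_xM$), and completeness plus the manifold hypothesis guarantees the tangent cone exists, is unique, and is homeomorphic to $\mathbb{R}^{2n}$ (as recalled after the definition of tangent cone, citing \cite{Gromov}); the transition maps between overlapping local models lie in $\mathrm{U}(n)$ and hence send flat simplices to flat simplices. So the compatibility is forced, but making the patching rigorous — in particular showing the decomposition is locally finite and that one can subdivide to a genuine simplicial (not just polyhedral cell) complex without destroying flatness — is the technical heart of the argument; everything else (holonomy, holomorphicity) then follows formally from the $(\mathbb{C}^{n},\mathrm{U}(n))$-structure and the cone-angle hypothesis.
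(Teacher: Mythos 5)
Your proof is correct and takes essentially the same route as the paper's: verify that the $(\mathbb{C}^{n},\mathrm{U}(n))$-cone-manifold charts yield a polyhedral (flat simplicial) structure, note that the holonomy lies in $\mathrm{U}(n)$ because the transition maps do, and observe that the cone-angle hypothesis makes the codimension-2 holomorphicity condition automatic. The paper's own proof is far terser---it simply asserts the existence of the flat simplicial decomposition---so your inductive construction via the stratification and the join factorization is supplying detail the paper leaves implicit rather than following a different approach.
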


\begin{proof}
Let us verify the conditions for $M$ to be a polyhedral K\"{a}hler manifold. The cone-manifold $M$ is modeled on $X=\mathbb{C}^{n}$, 
then it is clear that for a fixed simplicial decomposition of $M$ there exists a flat metric on every simplex of dimension $2n$ 
such that every two simplices with a common face are glued by an isometry. So $M$ is a polyhedral manifold. Since we restrict the 
isometry group $G$ to be the unitary group $\mathrm{U}(n)$ for our cone-manifold $M$, 
the holonomy of its metric clearly belongs to $\mathrm{U}(n)$. 
Given the cone angles along real codimension $2$ strata are assumed to be at most $2\pi$ (so that the complex structure in 
real codimension $2$ strata is automatic), hence, we have the assertion.
\end{proof}

\begin{remark}
Therefore, our parabolic case gives an affirmative answer to a conjecture raised by Panov on Page 2207 of \cite{Panov}: 
the metrics in the curvature zero case of the Dunkl system are polyhedral K\"{a}hler. 
That is to say, our parabolic $\mathbb{P}(G\backslash V)$ provides examples of the so-called polyhedral K\"{a}hler manifolds.
\end{remark}

\subsection{Elliptic case}

Next we treat the elliptic case. The main result for the elliptic case is as follows. 

\begin{theorem}\label{thm:elliptic}
	Let be given a Dunkl system. Suppose that $\kappa_{H}\in (0,+\infty)$ for every $H\in \mathscr{H}$, 
	that there is a flat positive definite Hermitian form on the tangent bundle of $V^{\circ}$. Assume that 
	there is no $L\in\mathscr{L}_{\mathrm{irr}}(\mathscr{H})$ with $\kappa_{L}=1$ and that the intersection of any two members 
	of $\mathscr{L}_{\mathrm{irr}}(\mathscr{H})$ on which $\kappa>1$ is still irreducible. 
	Then the projective developing map realizes the space $\mathbb{P}(G\backslash V^{\circ})$ as a divisor complement of 
	its metric completion, via which $\mathbb{P}(G\backslash V^{\circ})$ acquires the structure of an elliptic cone-manifold 
	(i.e., $(\mathbb{CP}^{n},\mathrm{PU}(n+1))-$cone-manifold). 
	In this case, the metric completion is $\mathbb{P}(G\backslash V^{\natural})$ (which we will describe below).
\end{theorem}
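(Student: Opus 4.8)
\textbf{Proof proposal for Theorem \ref{thm:elliptic}.}

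The plan is to run the same three-step strategy as in the parabolic case, but with two essential modifications forced by the elliptic hypotheses. First, since $\kappa_0 \neq 1$ here (indeed the Hermitian form is positive definite, so we are in the range $s<1$ of Theorem \ref{thm:hermitian-forms} and the affine structure is genuinely linear), the developing map $ev: \widehat{V^{\circ}} \to A$ projectivizes to a genuine projective developing map $\mathbb{P}(\widehat{V^{\circ}}) \to \mathbb{P}(A) \cong \mathbb{CP}^n$, and the target already carries a complete Fubini--Study metric. Second, and this is the key structural difference, the exponents $\kappa_L$ need not all be $<1$: we may have $\kappa_L > 1$ for some irreducible $L$. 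Near $D(L)^{\circ}$ the local form of the projective developing map is $[F_0, t^{1-\kappa_L}, t^{1-\kappa_L}F_1]$, which for $\kappa_L < 1$ is essentially $\check{F_0}$ (contract the $\mathbb{P}(L^{\sharp})$-transversal direction, exactly as before), but for $\kappa_L > 1$ the dominant coordinate is $t^{1-\kappa_L}$ and after rescaling the map is essentially $(1,F_1)$ — so now it is the $\mathbb{P}((V/L)^{\sharp})$-direction that survives and the $\mathbb{P}(L^{\sharp})$-factor that gets contracted to a point. The first step is therefore to run the Stein factorization (Lemma \ref{lem:Stein-factorization}), using Lemma \ref{lem:compact-fiber} to guarantee the fibers have compact connected components, and to identify the resulting Stein factor: this is the space $\mathbb{P}(V^{\natural})$, obtained from $\mathbb{P}(V^{\sharp})$ by blowing down, for each irreducible $L$, the $\mathbb{P}(L^{\sharp})$-direction of $D(L)$ when $\kappa_L < 1$ and the $\mathbb{P}((V/L)^{\sharp})$-direction when $\kappa_L > 1$. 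The hypotheses that no $\kappa_L = 1$ and that the intersection of any two $\kappa>1$ members stays irreducible are exactly what is needed for these contractions to be compatible along the normal-crossing strata (so the blow-downs can be performed consistently) and for $\mathbb{P}(V^{\natural})$ to be a well-defined normal analytic space. I would then check, as in the parabolic proof, that the metric pulled back from $\mathbb{CP}^n$ via the (locally immersive up to $\Gamma$) extended developing map does not change the topology, so $\mathbb{P}(V^{\natural})$ is compact and is the metric completion of $\mathbb{P}(V^{\circ})$, and that each stratum (image of some $E(L_{\bullet})^{\circ}$) is totally geodesic, using the curvature model \eqref{eqn:positive} with the new affine coordinates substituted and $t=0$ set.

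The second step is the inductive verification that the completed metric is of conical type near each stratum, i.e.\ that $\mathbb{P}(V^{\natural})$ is a genuine $(\mathbb{CP}^n, \mathrm{PU}(n+1))$-cone-manifold. Unlike the parabolic case, the exponential map is not the identity, so here one genuinely must identify the tangent cone at a point $z$ of a stratum and show it is a metric cone over a compact spherical cone-manifold $S_z M$ which decomposes as $S^{2d-3} * N_z M$ with $N_z M$ prime. The induction is on codimension, with base case $L$ of complex codimension $1$: by \eqref{eqn:positive} with $k=1$, near a point of $\mathbb{P}(H^{\circ})$ the curvature model is $\sqrt{-1}\partial\bar\partial \log(1 + \sum_{i=1}^{n-1}|z_i|^2 + |z_n|^{2(1-\kappa_H)})$, whose tangent cone is the flat $\mathbb{C}^{n-1}\times \mathbb{C}_{1-\kappa_H}$ — note $1-\kappa_H$ may now be negative, but the tangent cone is still a flat cone, now possibly with cone angle exceeding $2\pi$. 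For higher codimension $L$, I would pass to the transversal Dunkl system on $V/L$ with arrangement $\mathscr{H}_L$, invoke the inductive hypothesis that $\mathbb{P}(V/L)$ (or rather its $\natural$-completion) is already a cone-manifold, and observe that the normal cone $N_z(\mathbb{P}(V^{\natural}))$ is the unit tangent cone $S_0(V/L)$ — a circle bundle over $\mathbb{P}(V/L)^{\natural}$ with generic fiber length $2\pi|1-\kappa_L|$ when $\kappa_L<1$; when $\kappa_L>1$ the roles of $L$ and $V/L$ are swapped in the contraction, so one instead reads off the transversal data from the $\mathbb{P}(L^{\sharp})$-side. Then $S_z(\mathbb{P}(V^{\natural})) \cong S^{2d_L-3} * N_z(\mathbb{P}(V^{\natural}))$ and the tangent cone is the flat metric cone over it; reducible $L$ are handled exactly as in the parabolic proof by writing the normal fiber as a product of the $V/L'$ over irreducible $L' \supset L$ and taking joins of unit tangent cones.

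The third step is to incorporate the Schwarz symmetry group $G$: for each irreducible $L$, the Schwarz rotation group $G_L$ fixes $L$ and acts on $L^{\perp}$ by $p_L$-th roots of unity, acts freely on the unit tangent cone $S_0(V/L)$, so $S_0(G_L\backslash(V/L))$ is still a prime spherical cone-manifold and the generic circle length at $\mathbb{P}(G\backslash L^{\circ})$ becomes $2\pi|1-\kappa_L|/p_L$; one also treats strata where $G$ acts nontrivially only after projectivizing, exactly as in the parabolic case. Since the developing map is constant on $G$-orbits, passing to the quotient affects neither metric completeness nor the totally-geodesic property nor the conical structure. I expect the main obstacle to be Step 1 — specifically, proving that the two competing contractions (the $\kappa<1$ one contracting the base of $D(L)$, the $\kappa>1$ one contracting the fiber) are globally consistent along a flag $L_\bullet$ that mixes both types, so that the Stein factor $\mathbb{P}(V^{\natural})$ is actually a manifold (or at worst a normal analytic space) rather than something pathological; this is precisely where the hypothesis ``the intersection of any two members of $\mathscr{L}_{\mathrm{irr}}(\mathscr{H})$ on which $\kappa>1$ is still irreducible'' must be used, to rule out the bad configurations, and I would need to verify carefully that along every stratum the directions being blown down on the various $D(L_i)$ are mutually transverse. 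The rest — the curvature-model computations, the join decompositions, the induction — is parallel to the parabolic argument already given.
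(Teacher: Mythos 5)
Your overall strategy---Stein factorization onto $\mathbb{P}(A)$ to produce $\mathbb{P}(V^{\natural})$ with the two opposite contractions of $D(L)$ according to the sign of $\kappa_{L}-1$, then an inductive identification of tangent cones as metric cones over joins of spherical cone-manifolds, then the passage to the $G$-quotient---is exactly the one the paper uses, and your reading of where the hypotheses (no $\kappa_{L}=1$, irreducibility of intersections of $\kappa>1$ members) enter is correct. One small internal inconsistency first: after correctly stating that for $\kappa_{L}<1$ the direction transversal to $\mathbb{P}(L^{\sharp})$ is contracted and for $\kappa_{L}>1$ the $\mathbb{P}(L^{\sharp})$-factor is contracted, your summary sentence reverses the two cases; the first formulation is the right one and matches the paper.

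The genuine gap is the base case of your cone-structure induction. You set up a single induction ``on codimension, with base case $L$ of complex codimension $1$'' and claim that near a point of $\mathbb{P}(H^{\circ})$ the tangent cone is $\mathbb{C}^{n-1}\times\mathbb{C}_{1-\kappa_{H}}$, ``possibly with cone angle exceeding $2\pi$'' when $1-\kappa_{H}<0$. This is not what happens: a cone angle larger than $2\pi$ would require $1-\kappa_{H}>1$, not $1-\kappa_{H}<0$, and when $\kappa_{H}>1$ the stratum $\mathbb{P}(H^{\circ})$ does not survive into the metric completion as a codimension-one stratum at all. Since $V/H$ is one-dimensional, $\mathbb{P}((V/H)^{\sharp})$ is a point, so the whole divisor $D(H)\cong\mathbb{P}(H^{\sharp})$ is collapsed to a single point by the contraction, and the transversal data to analyze is $t^{\kappa_{H}-1}F_{0}$ on the $L$-side. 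This is why the paper runs the $\kappa_{L}>1$ branch as a separate induction on $\dim_{\mathbb{C}}L$ starting from $\dim_{\mathbb{C}}L=1$, where the new stratum $\mathbb{P}((V/L)^{\circ})$ has codimension one and carries the model $\sqrt{-1}\partial\bar{\partial}\log\big(1+|z_{1}|^{2(\kappa_{L}-1)}+\sum_{i=2}^{n}|z_{i}|^{2}\big)$, i.e.\ a positive cone angle $2\pi(\kappa_{L}-1)$. Your later remark that ``the roles of $L$ and $V/L$ are swapped'' and that one reads the transversal data off the $\mathbb{P}(L^{\sharp})$-side is the correct idea, but it contradicts the base case you wrote down, and as stated the induction does not get the $\kappa>1$ branch started. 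The remainder---the compact-fiber lemma, the totally geodesic curvature models, the join decomposition for reducible strata, and the $G$-quotient with angles divided by $p_{L}$---agrees with the paper's proof.
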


Now besides $\kappa_{L}<1$ it is also possible that $\kappa_{L}> 1$ for some $L$. 
So let us have a look how the projective developing map behaves around a stratum $\mathbb{P}(L^{\circ})$ for which $\kappa_{L}>1$. 
For that we still need the big resolved space $\mathbb{P}(V^{\sharp})$ 
which is obtained in the same way as described before. 
It is already known that the affine structure on $V^{\circ}$ degenerates infinitesimally simply along $E(L)^{\circ}$ with 
logarithmic exponent $\kappa_{L}-1$. If it is $>0$, the associated affine foliation of $E(L)^{\circ}$ is given by its projection 
onto $\mathbb{P}((V/L)^{\circ})$. Since we have a flat positive definite Hermitian form in this case, the restrictive `infinitesimally' 
can be dropped, i.e., $\kappa_{L}-1\notin\mathbb{Z}_{>0}$.

Then near a point $z\in D(L)^{\circ}$, when $\kappa_{L}>1$, the 
projective developing map is equivalent to the map 
\[
[F_{0},t^{1-\kappa_{L}},t^{1-\kappa_{L}}F_{1}]=[t^{\kappa_{L}-1}F_{0},1,F_{1}]
\]
which is essentially given by $F_{1}$. That means the extension of the map across $D(L)^{\circ}$ is realized by the projection 
$D(L)^{\circ}\rightarrow \mathbb{P}((V/L)^{\circ})$. In other words, the own direction of $\mathbb{P}(L)$ would be contracted 
so as to get a space compatible with the projective structure.

We still first establish the metric completion of $\mathbb{P}(G\backslash V^{\circ})$ for this case.

\begin{proposition}
In this case, the metric completion of $\mathbb{P}(G\backslash V^{\circ})$ is $\mathbb{P}(G\backslash V^{\natural})$, 
the Stein factor of the (multivalued) map $\mathbb{P}(G\backslash V^{\sharp})\rightarrow\mathbb{P}(A)$. 
Moreover, the stratum obtained after the birational operations induced by the projective structure 
is totally geodesic in $\mathbb{P}(G\backslash V^{\natural})$.
\end{proposition}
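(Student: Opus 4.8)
The plan is to run the argument of the parabolic case, but now tracking the two possibilities $\kappa_{L}<1$ and $\kappa_{L}>1$ that both occur here. First assume $G$ is trivial. On $\mathbb{P}(V^{\circ})$ the projective developing map is a local isomorphism onto an open subset of $\mathbb{P}(A)\cong\mathbb{CP}^{n}$, and by Theorem \ref{thm:hermitian-forms}(i) the flat Hermitian form stays positive definite, so $\mathbb{P}(A)$ carries its Fubini--Study metric and $\mathbb{P}(V^{\circ})$ inherits it; the task is to extend this picture across the exceptional strata of the big resolution $\mathbb{P}(V^{\sharp})$. Near a point $z\in E(L_{\bullet})^{\circ}$ attached to a flag $L_{\bullet}:\{0\}=L_{0}\subset L_{1}\subset\cdots\subset L_{k}\subset V$ the developing map is given by (\ref{eqn:Pev-mapping}); reading off which homogeneous coordinates stay bounded and which blow up as each defining equation $t_{r}\to 0$, exactly as in the parabolic computation but now permitting $\kappa_{r}>1$, one finds that the extended map factors through the projection that, for each $r$ with $\kappa_{r}<1$, contracts the $\mathbb{P}((V/L_{r})^{\sharp})$-direction of $D(L_{r})$ onto $\mathbb{P}(L_{r}^{\sharp})$, and for each $r$ with $\kappa_{r}>1$ contracts the $\mathbb{P}(L_{r}^{\sharp})$-direction onto $\mathbb{P}((V/L_{r})^{\sharp})$.

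The hypotheses that no $\kappa_{L}=1$ and that the intersection of any two irreducible members carrying $\kappa>1$ is again irreducible guarantee that these contractions are mutually compatible along overlapping strata, hence can be carried out simultaneously. By Lemma \ref{lem:compact-fiber} the connected components of the fibres of the multivalued map $\mathbb{P}(V^{\sharp})\to\mathbb{P}(A)$ through the exceptional strata are compact, so Lemma \ref{lem:Stein-factorization} applies and the resulting intermediate space is precisely the Stein factor $\mathbb{P}(V^{\natural})$; on it the extended developing map has injective differential at every point, with transition maps between local charts around $z$ lying in the local monodromy group $\Gamma_{z}$, so it is a local immersion up to $\Gamma$. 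Pulling back the Fubini--Study metric along this locally immersive map endows $\mathbb{P}(V^{\natural})$ with a (singular) K\"ahler metric which restricts to the given metric on $\mathbb{P}(V^{\circ})$ and varies continuously over $\mathbb{P}(V^{\natural})$; therefore it does not alter the underlying topology. Consequently $\mathbb{P}(V^{\natural})$ is compact, $\mathbb{P}(V^{\circ})$ is dense in it, and $\mathbb{P}(V^{\natural})$ is the metric completion of $\mathbb{P}(V^{\circ})$.

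For the totally geodesic claim, fix a stratum $S$ of $\mathbb{P}(V^{\natural})$, i.e.\ the image of some $E(L_{\bullet})^{\circ}$ after the contractions. Near a point of $S$, since the Hermitian form is flat for the Dunkl connection and $\Gamma$-invariant, we may rewrite the constant positive holomorphic sectional curvature model $\sqrt{-1}\partial\bar{\partial}\log(1+\sum_{i=1}^{n}|z_{i}|^{2})$ of (\ref{eqn:positive}) in the new affine coordinates supplied by the developing-map normal form attached to the flag, with the coordinates representing the contracted directions removed; setting the remaining defining equations $t_{i}=0$ produces the Fubini--Study curvature model of the appropriate lower dimension for the restricted metric. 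Since that restricted metric is again Fubini--Study, $S$ is totally geodesic in $\mathbb{P}(V^{\natural})$.

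Finally, when $G$ is nontrivial it is generated by the Schwarz rotation groups $G_{L}$, each fixing $L$ and acting on $L^{\perp}$ by roots of unity; the developing map is constant on $G$-orbits, so it descends to $\mathbb{P}(G\backslash V^{\sharp})$ and the entire discussion carries over verbatim with $\mathbb{P}(G\backslash V^{\sharp})$ and $\mathbb{P}(G\backslash V^{\natural})$ in place of $\mathbb{P}(V^{\sharp})$ and $\mathbb{P}(V^{\natural})$, the action of $G$ affecting neither metric completeness nor the totally geodesic property; as in the parabolic case one only notes that there may be strata on which $G$ acts nontrivially while acting trivially on the preimage in $V$, and there the same argument applies after passing to the induced quotient rotation group. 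I expect the main obstacle to be the mixed blow-up/blow-down bookkeeping of the second paragraph: unlike the parabolic case, where every transversal direction $\mathbb{P}(V/L)$ was contracted and one simply recovered $\mathbb{P}(V)$, here the contracted direction depends on the sign of $\kappa_{L}-1$, and one must verify that the Stein factorization genuinely yields a single normal analytic space on which the developing map is a local immersion — which is exactly what the two standing hypotheses of Theorem \ref{thm:elliptic} are there to secure.
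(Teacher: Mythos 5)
Your proposal is correct and follows essentially the same route as the paper: the same reading of the normal form (\ref{eqn:Pev-mapping}) to identify which direction of each $D(L)$ is contracted according to the sign of $\kappa_{L}-1$, the same appeal to Lemma \ref{lem:compact-fiber} and the Stein factorization to produce $\mathbb{P}(V^{\natural})$ as a space on which the extended map is a local immersion up to $\Gamma$, the same substitution of the degenerate projective structure into the constant-curvature model for the totally geodesic claim, and the same treatment of the Schwarz rotation groups. The only cosmetic slip is the citation of Theorem \ref{thm:hermitian-forms}(i) for positive definiteness, which in the elliptic case is simply a standing hypothesis rather than a consequence of that theorem.
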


\begin{proof}
As before, we first assume that $G$ is trivial. To see the metric completion we need to analyze the behavior of 
the projective developing map near any stratum of $\mathbb{P}(V^{\sharp})$. We know that a stratum of $\mathbb{P}(V^{\sharp})$ is 
determined by a flag $L_{\bullet}:L_{0}=\{0\}\subset L_{1}
\subset\dots \subset L_{k}\subset V=L_{k+1}$. By the monotonicity of the exponents, since no $L\in\mathscr{L}_{\mathrm{irr}}(\mathscr{H})$ 
has the property that $\kappa_{L}=1$, we may assume that $1-\kappa_{1}<\cdots <1-\kappa_{r-1}<0<1-\kappa_{r}<\cdots <1-\kappa_{k}$, 
then near a point in $E(L_{\bullet})^{\circ}$, the projective developing map ($\ref{eqn:Pev-mapping}$) is equivalent to  
\begin{multline*}
	\Big[\big(t_{1}^{1-\kappa_{1}}\cdots t_{i-1}^{1-\kappa_{i-1}}(1,F_{i})\big)_{i=1}^{k+1}\Big]=\\
	=\Big[\big(t_{i}^{\kappa_{i}-1}\cdots t_{r-1}^{\kappa_{r-1}-1}(1,F_{i})\big)_{i=1}^{r-1},(1,F_{r}),\big(t_{r}^{1-\kappa_{r}}
	\cdots t_{i-1}^{1-\kappa_{i-1}}(1,F_{i})\big)_{i=r+1}^{k+1}\Big]
\end{multline*}
which is essentially given by $F_{r}$. That means the extension of the map across $E(L_{\bullet})^{\circ}$ is realized by the projection 
$E(L_{\bullet})^{\circ}\rightarrow \mathbb{P}((L_{r}/L_{r-1})^{\circ})$.

This tells us that the extension across $E(L_{\bullet})^{\circ}$ by the projective developing map establishes a multivalued map 
from $\mathbb{P}(V^{\sharp})$ to $\mathbb{P}(A)$. 
Observe that the space $A$ has a $\Gamma$-invariant positive definite Hermitian form so that $\mathbb{P}(A)$ inherits a 
Fubini-Study metric from the complete space $A$. 
This extended map is certainly not injective near a point $z$ on $E(L_{\bullet})^{\circ}$ even up to the local monodromy group 
$\Gamma_{z}$, but by Lemma $\ref{lem:compact-fiber}$ we notice that 
the fiber over a point of $E(L_{\bullet})^{\circ}$ is compact. So that we can invoke the Stein factorization 
(Lemma $\ref{lem:Stein-factorization}$) again, for $\mathbb{P}(V^{\sharp})\rightarrow \mathbb{P}(A)$, to get an 
intermediate space, i.e., the Stein factor, denoted by $\mathbb{P}(V^{\natural})$, 
such that the differential of the extended map from the Stein factor to $\mathbb{P}(A)$ at $z$ is injective with transition maps 
(between charts around $z$) lying in $\Gamma_{z}$, and hence the extended map is a local immersion up to $\Gamma$. 
This factorization is realized by a projection $E(L_{\bullet})^{\circ}\rightarrow \mathbb{P}((L_{r}/L_{r-1})^{\circ})$. 
This can be obtained by contracting the transversal direction of $\mathbb{P}(L^{\sharp})$ for each 
$D(L)=\mathbb{P}(L^{\sharp})\times \mathbb{P}((V/L)^{\sharp})$ if $\kappa_{L}<1$, and contracting 
its own direction of $\mathbb{P}(L^{\sharp})$ for each $D(L)=\mathbb{P}(L^{\sharp})\times \mathbb{P}((V/L)^{\sharp})$ 
if $\kappa_{L}>1$. 

Then $\mathbb{P}(V^{\natural})$ is endowed with a (singular) Fubini-Study metric pulled back from $\mathbb{P}(A)$ 
since the resulting extended projective developing map is locally immersive up to $\Gamma$. 
It is clear that on $\mathbb{P}(V^{\circ})$ it is endowed with a smooth Fubini-Study metric by the given positive definite 
Hermitian form on $V^{\circ}$, which agrees with the metric pulled back from $\mathbb{P}(A)$ by the projective developing map. 
We further notice that this induced metric would not alter the original topology on $\mathbb{P}(V^{\natural})$ 
(since the metric varies over $\mathbb{P}(V^{\natural})$ continuously) 
so that $\mathbb{P}(V^{\natural})$ is compact  
and $\mathbb{P}(V^{\circ})$ is clearly dense in $\mathbb{P}(V^{\natural})$. 
This implies that $\mathbb{P}(V^{\natural})$ is the metric completion of $\mathbb{P}(V^{\circ})$. 

The given positive definite Hermitian form endows $\mathbb{P}(V^{\circ})$ with a K\"{a}hler metric of constant positive holomorphic 
sectional curvature. Since the Hermitian form is flat with respect to the Dunkl connection and $\Gamma$-invariant, 
for $D(L)^{\circ}$ we still substitute the degenerate projective structure into the 
local smooth model $\sqrt{-1}\partial\bar{\partial}\log (1+\sum_{i=1}^{n}|z_{i}|^{2})$. 
Take the situation near a point of 
$\mathbb{P}((V/L)^{\circ})$ for which $\kappa_{L}>1$ for instance, the projective developing map can be written as 
$[t^{\kappa_{L}-1}F_{0},1,F_{1}]$, and hence the curvature model near the point on $\mathbb{P}((V/L)^{\circ})$ is given by 
$
\sqrt{-1}\partial\bar{\partial}\log \big(1+\sum_{i=d_{L}+1}^{n}|z_{i}|^{2}\big)
$
by letting $t=0$. Likewise, the curvature models around other strata can be obtained in the same way as long as 
the projective structure is of the same shape.
Hence the curvature models for other strata are given as 
$
\sqrt{-1}\partial\bar{\partial}\log \big(1+\sum_{i=1}^{d_{L}-1}|z_{i}|^{2}\big)
$
on $\mathbb{P}(L^{\circ})$ for which $\kappa_{L}<1$; and 
$
\sqrt{-1}\partial\bar{\partial}\log \big(1+\sum_{i=d_{L'}+1}^{d_{L}-1}|z_{i}|^{2}\big)
$
on $\mathbb{P}((L/L')^{\circ})$ for which $L'\subset L$ with $\kappa_{L'}>1>\kappa_{L}$. (But note that 
$\mathbb{P}((L/L')^{\circ})$ is not irreducible in the sense that its transversal direction in $\mathbb{P}(V^{\natural})$ can be 
decomposed as $\mathbb{P}(L')\times\mathbb{P}(V/L)$.) 
This shows that all the above strata are totally geodesic in $\mathbb{P}(V^{\natural})$.

Now let us assume that $G$ is not trivial, as in the parabolic case, $G$ is a finite group generated by the Schwarz rotation 
group $G_{L}$ for each $L\in\mathscr{L}_{\mathrm{irr}}(\mathscr{H})$. 
The group $G_{L}$ is a finite subgroup of the projective unitary group of $V^{\natural}$ 
fixing $L$ (resp. $V/L$) and acting on $L^{\perp}$ (resp. $(V/L)^{\perp}$) as scalar multiplication by $p_{L}$th ($p_{L}:=|G_{L}|$) 
roots of unity when $\kappa_{L}<1$ (resp. $\kappa_{L}>1$) 
such that the Dunkl system is invariant under the action of $G_{L}$. 
It implies that the extended developing map is a constant on the $G$-orbits. 
Therefore, the action of $G$ would not affect the metric completion of $\mathbb{P}(V^{\natural})$ or that 
the newly obtained stratum being totally geodesic in $\mathbb{P}(V^{\natural})$.

The proof of the proposition is now complete.
\end{proof}

\begin{proof}[Proof of Theorem $\ref{thm:elliptic}$]
For this case it remains to show that the metric on $\mathbb{P}(G\backslash V^{\natural})$ is of a conical type. 
We first deal with the situation when $G$ is trivial. We still do it in an inductive way. 
For $L\in \mathscr{L}_{\mathrm{irr}}(\mathscr{H})$ 
whose $\kappa_{L}<1$, the unit tangent cone at a point of $\mathbb{P}(L^{\circ})$ can be obtained in the same way as in the 
parabolic case. So we deal with the strata for which $\kappa_{L}>1$. For $L$ of dimension $1$, according to the local model 
($\ref{eqn:positive}$), the curvature model near a point $z$ of $\mathbb{P}((V/L)^{\circ})$ on $\mathbb{P}(V^{\natural})$ is given as 
$
\sqrt{-1}\partial\bar{\partial}\log \big(1+|z_{1}|^{2(\kappa_{L}-1)}+\sum_{i=2}^{n}|z_{i}|^{2}\big)
$, 
so it is clearly a Fubini-Study metric with cone angle $2\pi(\kappa_{L}-1)$ along $\mathbb{P}((V/L)^{\circ})$. 
Its unit tangent cone is the join $S^{2n-3}*N_{z}(\mathbb{P}(V^{\natural}))$ which is a spherical cone-manifold. 
For $L$ of higher dimension, the $\mathscr{H}^{L}$ defines an arrangement on $L$, the degenerate affine structure near $(L)_{0}$ 
($L$ regarded as the fiber to $\mathbb{P}((V/L)^{\circ})$) 
can be written as $t^{\kappa_{L}-1}F_{0}=(t^{\kappa_{L}-1},t^{\kappa_{L}-1}f_{0})$ where $f_{0}$ is an affine chart 
for $\mathbb{P}(L)$, 
assuming the cone-manifold structure on $\mathbb{P}(L)$, by the same argument for $L$ whose $\kappa_{L}<1$ as in the parabolic case, 
the normal cone $N_{z}(\mathbb{P}(V^{\natural}))$ at a point $z$ of $\mathbb{P}((V/L)^{\circ})$ is a spherical cone-manifold 
so that the unit tangent cone $S_{z}(\mathbb{P}(V^{\natural}))$ at $z$ is isomorphic to a join 
$S^{2n-2d_{L}-1}*N_{z}(\mathbb{P}(V^{\natural}))$ whose strata in its singular loci are always of real odd dimension. 
For a stratum in $\mathbb{P}(V^{\natural})$ not irreducible, its normal fiber can be decomposed as a product of several normal fibers of 
irreducible strata in  $\mathbb{P}(V^{\natural})$, so its normal cone is just a join of normal cones of those irreducible strata. 
Therefore, the local metric model near a point $z$ of $\mathbb{P}(V^{\natural})$ can be simply given by a Fubini-Study metric 
cone $C_{g}(S_{z}(\mathbb{P}(V^{\natural})))$ over $S_{z}(\mathbb{P}(V^{\natural}))$, 
where $g$ is the endowed Fubini-Study metric, such that the cones over those strata in 
singular loci in $S_{z}(\mathbb{P}(V^{\natural}))$ are totally geodesic in $C_{g}(S_{z}(\mathbb{P}(V^{\natural})))$. This shows that 
$\mathbb{P}(V^{\natural})$ is an elliptic (i.e., $(\mathbb{CP}^{n},\mathrm{PU}(n+1))-$) cone-manifold.

Now we assume $G$ is not trivial, 
as in the parabolic case, we note that the developing map near a point $z$ of $E(L)^{\circ}$ 
factors through $G_{L}\backslash V^{\sharp}_{z}$. 
Then we can also apply the inductive argument to 
$G\backslash L$ if $\kappa_{L}>1$ (in this case, the induction starts from dimension $1$, note the cone angle replaced by 
$2\pi(\kappa_{L}-1)/p_{L}$ when $\dim_{\mathbb{C}}L=1$), 
or $G\backslash (V/L)$ if $\kappa_{L}<1$, 
so as to get the conical structure along the stratum $\mathbb{P}(G\backslash(V/L)^{\circ})$ or $\mathbb{P}(G\backslash L^{\circ})$. 

In addition, we also notice that there might exist some stratum, $K$ say, in $\mathbb{P}(V^{\natural})$ fixed by some subgroup 
of $G$ while which acts trivially on its preimage in $V^{\natural}$. 
Since it acts as a rotation on the normal fiber of $K$ which means the action on the normal cone is free, 
it would certainly maintain the conical structure along that stratum. 
Therefore, the conclusion follows as well by applying the same inductive argument to the normal fiber of $G\backslash K$.

This completes the proof.
\end{proof}

\begin{remark}
If the Dunkl system admits a flat positive definite Hermitian form, it is likely that the exponent $\kappa_{L}$ of an 
irreducible member $L\in\mathscr{L}_{\mathrm{irr}}(\mathscr{H})$ would never take the value $1$.
\end{remark}

For many situations in the elliptic case we have that $\kappa_{0}<1$, so we have a direct corollary from the above theorem.

\begin{theorem}
Let be given a Dunkl system. Suppose that $\kappa_{H}\in (0,+\infty)$ for every $H\in \mathscr{H}$, 
that there is a flat positive definite Hermitian form on the tangent bundle of $V^{\circ}$ and that $\kappa_{0}<1$. 
Then the projective developing map realizes the space $\mathbb{P}(G\backslash V^{\circ})$ as a divisor complement of 
its metric completion, via which $\mathbb{P}(G\backslash V^{\circ})$ acquires the structure of an elliptic cone-manifold. 
In this case, the metric completion is $\mathbb{P}(G\backslash V)$.
\end{theorem}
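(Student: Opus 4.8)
The plan is to obtain this statement as a corollary of Theorem~\ref{thm:elliptic}; the only work is to see that the single hypothesis $\kappa_{0}<1$ both makes the two auxiliary assumptions of Theorem~\ref{thm:elliptic} automatic and forces the Stein factor to collapse back to $\mathbb{P}(V)$. First I would invoke the inclusion-reverse monotonicity of the numbers $\kappa_{L}$: since the system is irreducible with $\cap_{H\in\mathscr{H}}H=\{0\}$, the origin belongs to $\mathscr{L}_{\mathrm{irr}}(\mathscr{H})$ and $\{0\}\subseteq L$ for every $L\in\mathscr{L}_{\mathrm{irr}}(\mathscr{H})$, so $\kappa_{L}\leq\kappa_{0}<1$ for all such $L$ (with strict inequality once $L\neq\{0\}$). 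In particular no irreducible member has $\kappa_{L}=1$, so the first auxiliary hypothesis of Theorem~\ref{thm:elliptic} holds; and there is no $L\in\mathscr{L}_{\mathrm{irr}}(\mathscr{H})$ with $\kappa_{L}>1$ at all, so the condition on intersections of members on which $\kappa>1$ is vacuously satisfied. Together with the standing data ($\kappa_{H}\in(0,+\infty)$ for all $H$, and a flat positive definite Hermitian form on the tangent bundle of $V^{\circ}$), all hypotheses of Theorem~\ref{thm:elliptic} are met, whence the projective developing map realizes $\mathbb{P}(G\backslash V^{\circ})$ as a divisor complement in its metric completion $\mathbb{P}(G\backslash V^{\natural})$, endowed there with the asserted elliptic (i.e.\ $(\mathbb{CP}^{n},\mathrm{PU}(n+1))$-)cone-manifold structure.

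It then remains to identify $\mathbb{P}(V^{\natural})$ with $\mathbb{P}(V)$. Here I would simply rerun the bookkeeping from the proof of Theorem~\ref{thm:elliptic}: the Stein factorization of the extended map $\mathbb{P}(V^{\sharp})\rightarrow\mathbb{P}(A)$ contracts, along each exceptional divisor $D(L)=\mathbb{P}(L^{\sharp})\times\mathbb{P}((V/L)^{\sharp})$, the transversal factor $\mathbb{P}(L^{\sharp})$ when $\kappa_{L}<1$ and the factor $\mathbb{P}((V/L)^{\sharp})$ when $\kappa_{L}>1$. Since every $\kappa_{L}<1$ in our situation, only the first kind of contraction occurs, and this is exactly the inverse of the sequence of blow-ups used to build $\mathbb{P}(V^{\sharp})$ out of $\mathbb{P}(V)$; hence $\mathbb{P}(V^{\natural})=\mathbb{P}(V)$, exactly as in the parabolic case. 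Passing to the $G$-quotient as before (the extended developing map being constant on $G$-orbits by the Schwarz rotation description), the metric completion is $\mathbb{P}(G\backslash V)$, and the strata $\mathbb{P}(G\backslash L^{\circ})$ remain totally geodesic.

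I do not expect a genuine obstacle here, the statement being explicitly flagged as a direct corollary. The one point requiring a word of care is the verification that the Stein factor really collapses back to $\mathbb{P}(V)$ with no further identification — i.e.\ that the compact-fiber input of Lemma~\ref{lem:compact-fiber}, fed into Lemma~\ref{lem:Stein-factorization}, produces precisely the contractions of transversal $\mathbb{P}(L^{\sharp})$-directions and nothing more. But with the uniform sign $1-\kappa_{L}>0$ for all $L$, this is word-for-word the argument already carried out in the parabolic proposition, so nothing new is needed.
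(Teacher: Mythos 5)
Your proposal is correct and follows essentially the same route as the paper: the paper's own proof likewise invokes the inclusion-reverse monotonicity of the exponents to get $\kappa_{L}<1$ for all $L\in\mathscr{L}_{\mathrm{irr}}(\mathscr{H})$, deduces that the statement is a subcase of Theorem~\ref{thm:elliptic}, and identifies $\mathbb{P}(G\backslash V^{\natural})$ with $\mathbb{P}(G\backslash V)$ because only the transversal contractions (inverse to the blow-ups) occur, exactly as in the parabolic case. Your write-up simply makes explicit the bookkeeping that the paper leaves implicit.
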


\begin{proof}
Since $\kappa_{0}<1$, we have $\kappa_{L}<1$ for any $L\in\mathscr{L}_{\mathrm{irr}}(\mathscr{H})$ by the monotonicity of the 
exponents. Then this is a subcase of the above theorem. In this case, the metric completion $\mathbb{P}(G\backslash V^{\natural})$ 
is just $\mathbb{P}(G\backslash V)$.
\end{proof}

\subsection{Hyperbolic case}

In this subsection we treat the hyperbolic case which needs somewhat more effort (dealing with the situation when $\kappa_{L}=1$). 

Now the affine space $A$ in which the developing map takes values is in fact a vector space (it has an origin) 
equipped with a nondegenerate Hermitian form $h$ of hyperbolic type. 
We denote by $A_{\mathbb{B}}$ the set of vectors in $A$ on which the Hermitian form takes negative self-product and 
by $\mathbb{B}$ its projectivization in $\mathbb{P}(A)$. Notice that $\mathbb{B}$ can be identified with the 
complex hyperbolic space $\mathbb{CH}^{n}$ and $A_{\mathbb{B}}$ can be thought of as a $\mathbb{C}^{\times}$-bundle over $\mathbb{B}$. 
The admissibility condition simply means that the developing map takes its values in $A_{\mathbb{B}}$ and hence 
the projective developing map takes its values in $\mathbb{B}$.

The main result for the hyperbolic case is as follows.

\begin{theorem}\label{thm:hyperbolic}
Let be given a Dunkl system. Suppose that $\kappa_{H}\in (0,+\infty)$ for every $H\in \mathscr{H}$, 
that it admits a flat admissible Hermitian form of hyperbolic type. 
Then the projective developing map realizes the space $\mathbb{P}(G\backslash V^{\circ})$ as a divisor complement of 
its metric completion, via which $\mathbb{P}(G\backslash V^{\circ})$ acquires the structure of a hyperbolic cone-manifold 
(i.e., $(\mathbb{CH}^{n},\mathrm{PU}(n,1))-$cone-manifold) of finite volume. In this case, the metric completion is 
$\mathbb{P}(G\backslash V^{\natural})$ (which we will describe below).
\end{theorem}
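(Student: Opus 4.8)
The plan is to mirror the treatment of the elliptic case, splitting the argument into a proposition that identifies the metric completion of $\mathbb{P}(G\backslash V^{\circ})$ with $\mathbb{P}(G\backslash V^{\natural})$ (and records total geodesy of the newly created strata), followed by the verification that the pulled-back complex-hyperbolic (Bergman) metric degenerates conically and has finite total volume. Here the admissibility hypothesis is used precisely to guarantee that the projective developing map takes values in $\mathbb{B}\cong\mathbb{CH}^{n}$, so that $\mathbb{P}(A)$ is replaced throughout by $\mathbb{B}$. As before we examine the projective developing map $(\ref{eqn:Pev-mapping})$ near a stratum $E(L_{\bullet})^{\circ}$ attached to a flag $L_{\bullet}\colon\{0\}=L_{0}\subset L_{1}\subset\cdots\subset L_{k}\subset L_{k+1}=V$. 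By the monotonicity of the exponents the numbers $1-\kappa_{L_{i}}$ are strictly increasing along the flag, so at most one proper member $L_{j}$ can satisfy $\kappa_{L_{j}}=1$; writing $1-\kappa_{1}<\cdots<1-\kappa_{r-1}<0\le 1-\kappa_{r}<\cdots<1-\kappa_{k}$, besides the elliptic dichotomy between $\kappa_{L}<1$ and $\kappa_{L}>1$ there is at most one vanishing (\emph{resonant}) exponent to handle.

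When no member of $L_{\bullet}$ is resonant, the local analysis is verbatim that of the elliptic case: with $r$ the first index for which $\kappa_{L_{r}}<1$, the map $(\ref{eqn:Pev-mapping})$ is, up to $\Gamma$, essentially the submersion $F_{r}$, so the extension across $E(L_{\bullet})^{\circ}$ is the projection onto $\mathbb{P}((L_{r}/L_{r-1})^{\circ})$, realized by contracting the transversal direction of $\mathbb{P}(L^{\sharp})$ in $D(L)=\mathbb{P}(L^{\sharp})\times\mathbb{P}((V/L)^{\sharp})$ when $\kappa_{L}<1$ and its own direction when $\kappa_{L}>1$; reducible strata are absorbed by joins as in the elliptic case, the Stein factor being in any event canonical. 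When some $L_{j}$ has $\kappa_{L_{j}}=1$, the logarithmic exponent $\kappa_{L_{j}}-1$ along $E(L_{j})^{\circ}$ vanishes; the residue of the Dunkl connection is then resonant on the conormal line of $E(L_{j})^{\circ}$, so the flat transversal coordinate is logarithmic and the developing map acquires a $\log t_{j}$-factor. Consequently a deleted neighbourhood of $E(L_{\bullet})^{\circ}$ is sent onto a horoball attached to a (well-defined) point of the boundary sphere $\partial\mathbb{B}$: the resonant strata are pushed off to infinity and are \emph{not} adjoined to the metric completion — they account precisely for the cusp ends of the cone-manifold, and along such a stratum the transverse structure is a hyperbolic cusp (cone angle $0$) rather than a cone. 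Away from the resonant divisors the connected components of the fibres of the extended map are compact (Lemma $\ref{lem:compact-fiber}$ applies unchanged, the excision of the resonant divisors only removing part of $\mathbb{P}(V^{\sharp})$), so Lemma $\ref{lem:Stein-factorization}$ yields the Stein factor $\mathbb{P}(V^{\natural})$ — the closure of $\mathbb{P}(V^{\circ})$ in the Bergman-pulled-back metric — on which the extended map is a local immersion up to $\Gamma$. The pulled-back metric is continuous, agrees with the given one on $\mathbb{P}(V^{\circ})$, leaves the topology unchanged, has compact non-cuspidal part and complete cusp ends, hence is complete; therefore $\mathbb{P}(V^{\natural})$ is the metric completion. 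Total geodesy of each non-resonant stratum follows, as in the elliptic case, by inserting the degenerate projective structure into the model $-\sqrt{-1}\,\partial\bar{\partial}\log(1-\sum_{i=1}^{n}|z_{i}|^{2})$ and setting the defining equations of the stratum equal to $0$.

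The conical structure is then obtained by the same induction on dimension as in the parabolic and elliptic cases, now using the local curvature model $(\ref{eqn:negative})$: for $L\in\mathscr{L}_{\mathrm{irr}}(\mathscr{H})$ with $\kappa_{L}<1$ the normal cone at a point of $\mathbb{P}(L^{\circ})$ is the unit tangent cone $S_{0}(V/L)$ of the $L$-transversal Dunkl system, a prime spherical cone-manifold whose generic circle fibre has length $2\pi(1-\kappa_{L})$; for $\kappa_{L}>1$ we first contract the own direction of $\mathbb{P}(L)$ and run the induction on the $L$-longitudinal system $\mathbb{P}(L)$, the normal cone of $\mathbb{P}((V/L)^{\circ})$ being a join $S^{2n-2d_{L}-1}\ast N$ with $N$ a spherical cone-manifold; for a reducible stratum the normal cone is the join of the normal cones of the irreducible members lying over it. This realizes $\mathbb{P}(V^{\natural})$ as a $(\mathbb{CH}^{n},\mathrm{PU}(n,1))$-cone-manifold. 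When $G$ is nontrivial, each Schwarz rotation group $G_{L}$ fixes $L$ (resp.\ $V/L$) and acts on the normal slice by $p_{L}$-th roots of unity, hence freely on the normal cone, so the argument persists with the codimension-one cone angle $2\pi(1-\kappa_{H})$ replaced by $2\pi(1-\kappa_{H})/p_{H}$; strata acquiring a stabilizer only after projectivization are handled exactly as in the elliptic case.

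Finally, finite volume is checked by covering $\mathbb{P}(V^{\natural})$ by a relatively compact piece — a neighbourhood of the union of the cone-type strata, contained in the compact space obtained by additionally contracting the resonant strata, hence of finite volume — together with one cusp neighbourhood per resonant stratum; in the $\log t$-model the Bergman volume form near such a stratum is, fibrewise over the stratum, comparable to $\frac{dt\wedge d\bar{t}}{|t|^{2}(\log|t|)^{2}}$, whose integral over a punctured disc converges, and iterated cusps (resonant strata whose underlying stratum has itself resonant substrata) are absorbed by the same induction, the cross-sectional volumes being finite by the inductive hypothesis. The main obstacle is precisely the resonant locus $\{\kappa_{L}=1\}$: one must establish the logarithmic normal form of the developing map there, verify that the Stein factorization and Lemma $\ref{lem:compact-fiber}$ survive the deletion of these divisors, confirm that the resulting ends are genuine finite-volume complex-hyperbolic cusps rather than incomplete ends, and track them consistently through the inductive join description — everything else being a routine transcription of the elliptic argument.
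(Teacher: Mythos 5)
Your proposal is correct in substance and follows the paper's overall architecture (a proposition identifying the metric completion via Stein factorization, then the inductive conical structure, then finite volume), but it handles the crux of the hyperbolic case --- the resonant locus $\kappa_{L}=1$ --- by a genuinely different device. The paper blows up each $L$ with $\kappa_{L}=1$ in a \emph{real-oriented} manner, so that $V^{\sharp}$ becomes a manifold with boundary and $\log t$ becomes an honest chart coordinate with values in $[-\infty,+\infty)+\sqrt{-1}\mathbb{R}$, and it extends the projective developing map continuously to $\mathbb{P}(V^{\sharp})\rightarrow\mathbb{B}^{\sharp}$, where $\mathbb{B}^{\sharp}$ is the Borel--Serre extension; compactness of fiber components is then verified on the bordified picture, boundary strata included, and the complex-analytic Stein factorization over $\mathbb{B}$ is read off from the topological one. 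You instead delete the resonant divisors and argue directly with horoballs at parabolic points of $\partial\mathbb{B}$. Both routes work, but the paper's bordification buys a cleaner justification of the one step you gloss over: your parenthetical that Lemma~\ref{lem:compact-fiber} ``applies unchanged, the excision of the resonant divisors only removing part of $\mathbb{P}(V^{\sharp})$'' is not by itself a reason --- excising a closed set can destroy compactness of fiber components. What actually saves the argument is that the fibers over points of $\mathbb{B}$ never meet the resonant divisors, because those divisors are sent entirely into the boundary at infinity; you have this fact (``pushed off to infinity'') but do not invoke it where it is needed, whereas the paper records it explicitly as ``the preimage of the Borel--Serre boundary is in the boundary of the domain.'' On the other side of the ledger, your finite-volume estimate via the fibrewise density comparable to $\frac{dt\wedge d\bar{t}}{|t|^{2}(\log|t|)^{2}}$ is more explicit than the paper's appeal to exponential convergence of real geodesics in the foliating hyperbolic planes, and your observation that strict monotonicity of the exponents permits at most one resonant member per flag is a useful point the paper leaves implicit.
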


Our resolved space $V^{\sharp}$ for this case is obtained by blowing up the members of $\mathscr{L}_{\mathrm{irr}}(\mathscr{H})$ 
in their natural partial order (so starting from the smallest dimension), 
among which we blow up each $L$ with $\kappa_{L}=1$ in a \emph{real-oriented manner}. 
Notice that the action of the group $G$ can be extended to $V^{\sharp}$ naturally as well. 
It is then clear that $V^{\sharp}$ is a manifold with boundary $\partial V^{\sharp}$. Its interior $V^{\sharp}-\partial V^{\sharp}$ 
is a quasiprojective variety and contains $V^{\circ}$ as an open-dense subset. 
Now the complement of $V^{\circ}$ in $V^{\sharp}-\partial V^{\sharp}$ is a normal crossing divisor whose 
closure intersects the boundary wall transversally.

When $\kappa_{L}>1$, as in the elliptic case, the associated affine foliation of $E(L)^{\circ}$ is given by its projection 
onto $\mathbb{P}((V/L)^{\circ})$, then by Lemma 6.7(i) of \cite{Couwenberg-Heckman-Looijenga} the restriction of the given 
Hermitian form to the fibers of the retraction $V_{L^{\circ}}\rightarrow (V/L)^{\circ}$ (i.e., the fibers of the retraction 
$E(L)^{\circ}\rightarrow \mathbb{P}(V/L)^{\circ}$) is positive, so that the restrictive `infinitesimally' 
can also be dropped, i.e., $\kappa_{L}-1\notin\mathbb{Z}_{>0}$.

Then let us look at the situation when $\kappa_{L}=1$. For that we need to introduce the \emph{Borel-Serre extension} 
associated to $\Gamma$, which could be viewed as a partial compactification of $\mathbb{B}$. 
Let us briefly recall the construction of the Borel-Serre extension over here, 
readers can refer to Section 6.4 of \cite{Couwenberg-Heckman-Looijenga} or Section 4.2 of \cite{Looijenga} 
for a more detailed description. 
Let $I\subset A$ be an isotropic line. Then the unitary transformations which respect the flag 
$\{0\}\subset I\subset I^{\perp}\subset A$ and act trivially on the successive quotients form a so-called 
\emph{Heisenberg group} $N_{I}$. Notice that the one-dimensional complex vector space $I\otimes\bar{I}$ has a natural 
real structure whose orientation is given by the positive ray of the elements $e\otimes e$, where $e$ runs over the 
generators of $I$. The center $Z(N_{I})$ of the Heisenberg group can be parametrized by the real line 
$\sqrt{-1}I\otimes\bar{I}(\mathbb{R})$ and the quotient $N_{I}/Z(N_{I})$ hence can be identified with the 
vector group $I^{\perp}/I\otimes\bar{I}$.

Note that the orbits of the positive ray of elements in $I\otimes\bar{I}$ are (oriented) geodesic rays in $\mathbb{B}$ 
which point to $[I]\in\partial\mathbb{B}$. If we fix a generator $e$ of $I$, then $\mathbb{B}$ can be realized as a 
subset of a hyperplane in $A$ defined by $h(z,e)=1$, which is known as the realization of $\mathbb{B}$ as a Siegel 
domain of the second kind. Under this realization the geodesic ray action becomes a group of translations over 
negative multiples of $e$. We write the space of these rays as $\mathbb{B}(I)$ so that we have a projection 
$\pi_{I}:\mathbb{B}\rightarrow\mathbb{B}(I)$ with fibers being these rays. We consider the disjoint union 
$\mathbb{B}\sqcup\mathbb{B}(I)$ endowed with the topology generated by (i) the open subsets of $\mathbb{B}$ and 
(ii) the subsets of the form $U\sqcup\pi_{I}(U)$ where $U$ runs over the open subsets of $\mathbb{B}$ invariant 
under $N_{I}$ and the positive ray in $I\otimes\bar{I}$. We call this topology the \emph{Borel-Serre topology}.

Now for each isotropic line $I\subset A$ such that $\Gamma\cap N_{I}$ is cocompact we add the partial boundary 
$\mathbb{B}(I)$ to $\mathbb{B}$ endowed with the above topology so that we get a manifold with boundary, 
denoted by $\mathbb{B}^{\sharp}$. 
We call this $\mathbb{B}^{\sharp}$ the Borel-Serre extension associated to $\Gamma$. There are infinitely many 
connected components in its boundary when $\Gamma$ does not act on $\mathbb{B}$ cocompactly. But it is worthwhile to 
note that the action of $\Gamma$ on the boundary is cocompact.

Each $L\in\mathscr{L}_{\mathrm{irr}}(\mathscr{H})$ with $\kappa_{L}\neq 1$ defines a divisor $E(L)$ in $V^{\sharp}$ as before. 
While any other $L\in\mathscr{L}_{\mathrm{irr}}(\mathscr{H})$ with $\kappa_{L}=1$ defines a boundary component $\partial_{L}V^{\sharp}$ 
and it contains a unique open dense stratum which can be identified with the product $L^{\circ}\times\mathbb{S}((V/L)^{\circ})$, 
where $\mathbb{S}$ assigns to a real vector space the sphere of its real half lines. Therefore, 
the developing map near $(\partial_{L}V^{\sharp})^{\circ}$ is equivalent to 
\[
(F_{0},\log t,F_{1})
\]
which shows that the map escapes to infinity when it approaches to $L^{\circ}$ along a curve in $V/L$. 

These divisors and boundary components intersect normally in an evident manner so that we have a natural stratification of $V^{\sharp}$. 
Then an arbitrary stratum is defined by a collection of divisors and boundary components determined by a subset 
of $\mathscr{L}_{\mathrm{irr}}(\mathscr{H})$: it has a nonempty intersection if and only if that subset makes up a 
flag $L_{\bullet}:L_{0}\subset\cdots\subset L_{r}\subset L_{r+1}\subset\cdots\subset L_{k}\subset L_{k+1}=V$. 
In that case their intersection contains an open dense stratum $S(L_{\bullet})$ which decomposes as 
\[
S(L_{\bullet})=L_{0}^{\circ}\times\prod_{i=1}^{r}\mathbb{P}((L_{i}/L_{i-1})^{\circ})\times\mathbb{P}((L_{r+1}/L_{r})^{\circ}) 
\times\prod_{i=r+2}^{k+1}\mathbb{P}((L_{i}/L_{i-1})^{\circ})
\]
if there is no $L_{i}$ whose exponent is one; but if the exponent of some $L_{i}$, say $L_{r}$, happens to be 
one, then the factor $\mathbb{P}((L_{r+1}/L_{r})^{\circ})$ should be replaced by $\mathbb{S}((L_{r+1}/L_{r})^{\circ})$.

The preimage $\mathbb{P}(V^{\sharp})$ of the origin of $V$ in $V^{\sharp}$ is a compact manifold with boundary 
$\mathbb{P}(\partial V^{\sharp})$. So the manifold interior $\mathbb{P}(V^{\sharp}-\partial V^{\sharp})$ is a 
quasiprojective manifold which contains $\mathbb{P}(V^{\circ})$ as the complement of a normal crossing divisor. 
The strata in $\mathbb{P}(V^{\sharp})$ are determined by the flag $L_{\bullet}$ as described above provided that $L_{0}=\{0\}$.

We first collect some properties of the extended projective developing map.

\begin{proposition}\label{prop:hyp-compact-fiber}
The projective developing map extends to $\mathbb{P}(V^{\sharp})\rightarrow \mathbb{B}^{\sharp}$ with range in the 
Borel-Serre extension as a (multivalued) continuous map 
which is a constant on the $G$-orbits. Then we have: 
\begin{enumerate}[label=(\roman*)]
\item Up to $\Gamma$, it sends every boundary component of $\mathbb{P}(V^{\sharp})$ to a Borel-Serre boundary component 
of $\mathbb{B}^{\sharp}$ 
and the restriction to the interior $\mathbb{P}(V^{\sharp}-\partial V^{\sharp})\rightarrow\mathbb{B}$ is a holomorphic map.

\item Every connected component of a fiber of the map $\mathbb{P}(V^{\sharp})\rightarrow \mathbb{B}^{\sharp}$ is compact.
\end{enumerate}
\end{proposition}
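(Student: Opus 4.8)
The plan is to build the extension one stratum at a time from the explicit local normal forms of the developing map recorded above, and then to transcribe the compactness argument of Lemma~\ref{lem:compact-fiber}, adding the bookkeeping forced by the $\kappa_L=1$ boundary components.

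For part (i), I would start from the fact that on $\mathbb{P}(V^\circ)$ the projective developing map is a multivalued holomorphic local isomorphism into $\mathbb{B}$, equivariant for $\Gamma$; since the connection and the Hermitian form are $G$-invariant and $G$ is generated by the Schwarz rotations $G_L$, the map is automatically constant on $G$-orbits. Then I would extend across each stratum $S(L_\bullet)$ of $\mathbb{P}(V^\sharp)$. For a flag with no exponent-one member, the local forms $[F_0,t^{1-\kappa_L},t^{1-\kappa_L}F_1]$ when $\kappa_L<1$ and $[t^{\kappa_L-1}F_0,1,F_1]$ when $\kappa_L>1$ --- more precisely the iterated form \eqref{eqn:Pev-mapping} --- show that suitable homogeneous coordinates vanish in the limit, so the map extends continuously; that the limit stays in $\mathbb{B}$ I would deduce from admissibility, the positivity of the Hermitian form on the fibres of the retraction when $\kappa_L>1$ (Lemma~6.7(i) of \cite{Couwenberg-Heckman-Looijenga}), and the monotonicity of the exponents. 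For a flag containing an $L$ with $\kappa_L=1$, the local form $(F_0,\log t,F_1)$ shows that the image escapes to infinity along a geodesic ray pointing at the isotropic line $I_L$ cut out by the Euler field of the $L$-longitudinal system; I would compare the real-oriented blow-up coordinate $t$ with the Siegel-domain realization of $\mathbb{B}$ to see that this ray action is exactly translation by negative multiples of a generator of $I_L$, so that in the Borel--Serre topology the map extends continuously to the boundary piece $\mathbb{B}(I_L)$ (with $\Gamma\cap N_{I_L}$ cocompact, as guaranteed by the construction in Section~6.4 of \cite{Couwenberg-Heckman-Looijenga}, so that $\mathbb{B}(I_L)$ is genuinely a Borel--Serre boundary component). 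Continuity, $\Gamma$-equivariance and $G$-invariance pass to the extension by density, the image of each boundary component of $\mathbb{P}(V^\sharp)$ is up to $\Gamma$ a single Borel--Serre boundary component, and on the interior the map takes values in the bounded domain $\mathbb{B}$ and is holomorphic off $\cup D(L)$, hence holomorphic everywhere by Riemann removable singularities across the $D(L)^\circ$ followed by Hartogs across the deeper strata.

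For part (ii), over $\mathbb{P}(V^\circ)$ the map is a local isomorphism up to $\Gamma$, so fibres are discrete and their components are points. Over a point $z$ of a stratum $S(L_\bullet)$ with $L_\bullet:\{0\}=L_0\subset L_1\subset\cdots\subset L_k\subset V$ (so $k\geq 1$), I would argue exactly as in Lemma~\ref{lem:compact-fiber}: for each index $r$ the connected component of the fibre through $z$ is forced into a fibre of one of the projections
\[
D(L_r)=\mathbb{P}(L_r^\sharp)\times\mathbb{P}((V/L_r)^\sharp)\longrightarrow\mathbb{P}(L_r^\sharp)\ \ (\kappa_{L_r}<1),\quad
D(L_r)\longrightarrow\mathbb{P}((V/L_r)^\sharp)\ \ (\kappa_{L_r}>1),
\]
and, when $\kappa_{L_r}=1$, into a fibre of the analogous projection of $\partial_{L_r}V^\sharp$ compatible with $\pi_{I_{L_r}}\colon\mathbb{B}\to\mathbb{B}(I_{L_r})$. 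Since the fibre is smooth at $z$ each component is irreducible, and each is a product of projective (respectively compact real-oriented blow-up) fibres, hence compact.

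The hard part will be the $\kappa_L=1$ case throughout: matching the real-oriented blow-up of $L$ with the Borel--Serre topology on $\mathbb{B}\sqcup\mathbb{B}(I_L)$ --- i.e.\ verifying that the coordinate $\log t$ really implements the geodesic-ray/Heisenberg action defining that topology --- pinning down the isotropic line $I_L$ and the cocompactness of $\Gamma\cap N_{I_L}$, and then reading off which factor of the boundary stratum gets contracted so as to conclude compactness of the fibre components. The $\kappa_L\neq 1$ parts are essentially the hyperbolic transcription of the elliptic and parabolic arguments already carried out.
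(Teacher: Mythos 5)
Your proposal is correct and follows essentially the same route as the paper: extend via the explicit local normal forms stratum by stratum, identify the $\log t$ coordinate for an exponent-one $L$ with the Siegel-domain chart $h(-,e)=1$ so that the real-oriented blow-up matches the geodesic-ray translation defining the Borel--Serre topology, and prove compactness of fiber components by transcribing Lemma \ref{lem:compact-fiber} with smoothness giving irreducibility and the appropriate factor of the boundary stratum being contracted. The only cosmetic deviation is your Riemann-removable-singularity/Hartogs argument for holomorphy on the interior, where the paper simply reads holomorphy off the normal form since the component morphisms $F_i$ are holomorphic.
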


\begin{proof}
The proof requires us to analyze the behavior of the projective 
developing map near any stratum of $\mathbb{P}(V^{\sharp})$. Since a stratum of $\mathbb{P}(V^{\sharp})$ is determined by a flag 
$L_{\bullet}$, we divide its formation into 2 cases: 1) there is no $L$ such that $\kappa_{L}=1$ in the flag $L_{\bullet}$ and, 2) 
there exists an $L$ such that $\kappa_{L}=1$.

The first case, i.e., the case without boundary, can be dealt with in the same way as in the elliptic case. 
So let us consider the second case: there exists an $L_{i}\in L_{\bullet}$, say $L_{r}$, such that its exponent $\kappa_{r}=1$, 
i.e., the case with boundary components. Then the stratum $S(L_{\bullet})$ determined by the flag 
$L_{\bullet}:\{0\}=L_{0}\subset\cdots\subset L_{r}\subset L_{r+1}\subset\cdots\subset L_{k}\subset L_{k+1}=V$
decomposes as 
\[
S(L_{\bullet})=\prod_{i=1}^{r}\mathbb{P}((L_{i}/L_{i-1})^{\circ})\times\mathbb{S}((L_{r+1}/L_{r})^{\circ}) 
\times\prod_{i=r+2}^{k+1}\mathbb{P}((L_{i}/L_{i-1})^{\circ}),
\]
and the developing map over a point on that stratum is affine-linearly equivalent to a multivalued map taking values 
in $\mathbb{C}\times T_{1}\times\cdots\times\mathbb{C}\times T_{r}\times\mathbb{C}\times T_{r+1}\times\cdots\times\mathbb{C}
\times T_{k+1}$ as follows: 
\begin{multline*}
\Big(\big(t_{0}^{1-\kappa_{0}}\cdots t_{i-1}^{1-\kappa_{i-1}}(1,F_{i})\big)_{i=1}^{r-1},
t_{0}^{1-\kappa_{0}}\cdots t_{r-1}^{1-\kappa_{r-1}}(1,F_{r},\log t_{r},F_{r+1}),\\
\big(t_{0}^{1-\kappa_{0}}\cdots t_{i-1}^{1-\kappa_{i-1}}(1,F_{i})\big)_{i=r+2}^{k+1}\Big)
\end{multline*}
assuming that $1-\kappa_{0}<1-\kappa_{1}<\cdots <1-\kappa_{r-1}<0=1-\kappa_{r}<1-\kappa_{r+1}<\cdots <1-\kappa_{k}$.

Here we note that for the real-oriented blowup, $\log t_{r}$ could be viewed as a coordinate: its imaginary part 
$\arg t_{r}$ is used to parametrize the ray space $\mathbb{S}((L_{r+1}/L_{r})^{\circ})$ and its real part $\log |t_{r}|$ 
should be allowed to take the boundary value $-\infty$. We thus rewrite this coordinate as $\tau_{r}$. Then on a connected 
component of $S(L_{\bullet})$ in $\mathbb{P}(V^{\sharp})$, we have that $(F_{1},t_{1},\dots,\tau_{r},\dots,t_{k},F_{k+1})$ 
constitutes a chart for $\mathbb{P}(V^{\sharp})_{z}$ and the projective developing map becomes 
\[
\Big[\big(t_{i}^{1-\kappa_{i}}\cdots t_{r-1}^{1-\kappa_{r-1}}(1,F_{i})\big)_{i=1}^{r-1},
(1,F_{r},\tau_{r},F_{r+1}),
\big(t_{r+1}^{1-\kappa_{r+1}}\cdots t_{i-1}^{1-\kappa_{i-1}}(1,F_{i})\big)_{i=r+2}^{k+1}\Big]
\]
which is essentially given by $(F_{r},\tau_{r},F_{r+1})$.

For (i), we use the above explicit descriptions to see the projective developing map extends to 
$\mathbb{P}(V^{\sharp})\rightarrow \mathbb{B}^{\sharp}$. The constant component $1$ ahead of $F_{r}$ shows that 
the map sends the stratum to an affine chart of a projective space. While from before we already know that there is 
a realization of $\mathbb{B}^{\sharp}$ such that a chart of $\mathbb{B}^{\sharp}$ is given by the affine plane in $A$ 
defined by $h(-,e)=1$, where $e$ is minus the unit vector corresponding to the slot occupied by $\tau_{r}$. 
Under this realization the geodesic ray action is then given by the translations over negative multiples of $e$. 
So we have in fact a chart for the Borel-Serre compactification, as long as we allow $\tau_{r}$ to take its values 
on $[-\infty,+\infty)+\sqrt{-1}\mathbb{R}$. It follows that the projective developing map extends to 
$\mathbb{P}(V^{\sharp})\rightarrow \mathbb{B}^{\sharp}$. In particular, it sends the boundary stratum $S(L_{\bullet})$ 
to the Borel-Serre boundary (for $\mathrm{Re}(\tau_{r})$ taking value $-\infty$). It is also clear, by the case without 
boundary, that the restriction on the interior $\mathbb{P}(V^{\sharp}-\partial V^{\sharp})\rightarrow\mathbb{B}$ 
is holomorphic, since those component morphisms $F_{i}$ are all holomorphic. 
But these are all of only local nature because it is up to the action of $\Gamma$ on the range $\mathbb{B}$.

For (ii), by Lemma $\ref{lem:compact-fiber}$ the statement holds for the case without boundary. So we only need to deal with 
the case with boundary. For a point $z$ on any other stratum $S(L_{\bullet})$, we have $k\geq 1$ since the stratum is not 
open anymore. Without loss of generality we assume that $1-\kappa_{k-1}<1-\kappa_{k}=0$ so that we need only to consider 
contracting the longitudinal direction, the fiber passing through $z$ is locally given by $(F_{k},\tau_{k},F_{k+1})$ where 
$\mathrm{Re}(\tau_{k})$ takes value $-\infty$. We see that the fiber is smooth at any $z$ over that stratum $S(L_{\bullet})$, 
hence a connected component of that fiber is also irreducible. So an irreducible component of a fiber 
through $z$ lies in the fiber over $(z_{k},z_{k+1})\in\mathbb{P}((L_{k}/L_{k-1})^{\circ})\times\mathbb{S}((V/L_{k})^{\circ})$ 
of the projection 
\begin{multline*}
\partial \mathbb{P}(V^{\sharp})\cap D(L_{k-1})=\mathbb{P}(L_{k-1}^{\sharp})\times\mathbb{P}((L_{k}/L_{k-1})^{\sharp})
\times\mathbb{S}((V/L_{k})^{\sharp})\\
\rightarrow \mathbb{P}((L_{k}/L_{k-1})^{\sharp})\times\mathbb{S}((V/L_{k})^{\sharp}),
\end{multline*}
where $z_{k+1}$ should be understood as a real coordinate. Then it is clear to see that every irreducible component in 
$\partial \mathbb{P}(V^{\sharp})\cap D(L_{k-1})$ over $(z_{k},z_{k+1})$ is compact.

The proof of the proposition is complete.
\end{proof}

Now we are ready to establish the metric completion of $\mathbb{P}(G\backslash V^{\circ})$.

\begin{proposition}
In this case, the metric completion of $\mathbb{P}(G\backslash V^{\circ})$ is $\mathbb{P}(G\backslash V^{\natural})$, 
the Stein factor of the (multivalued) map from the space $\mathbb{P}(G\backslash(V^{\sharp}-\partial V^{\sharp}))$ 
to the complex ball $\mathbb{B}$. 
Moreover, the stratum after the birational operations induced by the projective structure 
is totally geodesic in $\mathbb{P}(G\backslash V^{\natural})$.
\end{proposition}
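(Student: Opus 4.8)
The strategy mirrors that of the parabolic and elliptic propositions, the genuinely new inputs being Proposition~\ref{prop:hyp-compact-fiber} and the non-compactness of the target. I would first treat the case where $G$ is trivial. By Proposition~\ref{prop:hyp-compact-fiber}(i) the projective developing map restricts to a (multivalued) holomorphic map $\mathbb{P}(V^{\sharp}-\partial V^{\sharp})\to\mathbb{B}$, and by Proposition~\ref{prop:hyp-compact-fiber}(ii) the connected components of its fibers are compact. Since $\mathbb{P}(V^{\sharp}-\partial V^{\sharp})$ is a connected normal (quasiprojective) manifold and $\mathbb{B}$ is a connected complex manifold, I would invoke the Stein Factorization Theorem (Lemma~\ref{lem:Stein-factorization}), $\Gamma$-equivariantly and exactly as in the elliptic case, to factor this map as $\mathbb{P}(V^{\sharp}-\partial V^{\sharp})\to\mathbb{P}(V^{\natural})\to\mathbb{B}$ through a connected normal analytic space $\mathbb{P}(V^{\natural})$, with the first map proper with connected fibers and the second having discrete fibers. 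As in the elliptic argument, reading off the explicit shape of the developing map near a stratum $S(L_{\bullet})$ — which over the interior involves no $L_{i}$ with $\kappa_{L_{i}}=1$ — shows that this contraction collapses the transversal direction of $\mathbb{P}(L^{\sharp})$ in each $D(L)$ with $\kappa_{L}<1$ and its own direction in each $D(L)$ with $\kappa_{L}>1$; consequently $\mathbb{P}(V^{\natural})\to\mathbb{B}$ becomes injective on differentials with transition maps between nearby charts lying in $\Gamma_{z}$, that is, it is a local immersion up to $\Gamma$.

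Next I would pull the $\Gamma$-invariant complex hyperbolic metric of $\mathbb{B}$ back along this local immersion, endowing $\mathbb{P}(V^{\natural})$ with a (singular) complex hyperbolic K\"{a}hler metric which, restricted to the open dense stratum $\mathbb{P}(V^{\circ})$, agrees with the metric of the given admissible Hermitian form, since there the developing map is a local isomorphism. This metric varies continuously over $\mathbb{P}(V^{\natural})$, so it does not alter the topology, and $\mathbb{P}(V^{\circ})$ is dense. To prove that $\mathbb{P}(V^{\natural})$ is complete — the point where the argument departs from the compact parabolic and elliptic cases — I would use the Borel--Serre bordification: running the same Stein factorization, now in the topological category, on the map $\mathbb{P}(V^{\sharp})\to\mathbb{B}^{\sharp}$ of Proposition~\ref{prop:hyp-compact-fiber}, whose fibers still have compact connected components, yields a compact space containing $\mathbb{P}(V^{\natural})$ as a dense open subset and whose complement is sent onto the Borel--Serre boundary of $\mathbb{B}^{\sharp}$ (the local model $(F_{0},\log t,F_{1})$ showing that one escapes to that boundary as $\log|t|\to-\infty$). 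Since the Borel--Serre boundary lies at infinite distance from every interior point of $\mathbb{B}$, a Cauchy sequence in $\mathbb{P}(V^{\circ})$, being bounded, cannot approach it; by compactness it subconverges to an interior point of $\mathbb{P}(V^{\natural})$, hence converges there. Therefore $\mathbb{P}(V^{\natural})$, being a dense subset carrying the extending metric, is the metric completion of $\mathbb{P}(V^{\circ})$.

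For the total geodesy of the strata produced by the birational operations I would argue exactly as in the elliptic case: since the admissible Hermitian form is flat for the Dunkl connection and $\Gamma$-invariant, I substitute the degenerate projective structure — $(\check{F_{0}},t^{1-\kappa_{L}},t^{1-\kappa_{L}}F_{1})$ when $\kappa_{L}<1$, respectively $[t^{\kappa_{L}-1}F_{0},1,F_{1}]$ when $\kappa_{L}>1$ — into the constant negative holomorphic sectional curvature model $-\sqrt{-1}\partial\bar{\partial}\log(1-\sum_{i=1}^{n}|z_{i}|^{2})$ of $(\ref{eqn:negative})$ and set $t=0$. This produces the curvature model of the stratum itself, namely $-\sqrt{-1}\partial\bar{\partial}\log(1-\sum_{i=1}^{d_{L}-1}|z_{i}|^{2})$ on $\mathbb{P}(L^{\circ})$ with $\kappa_{L}<1$ and $-\sqrt{-1}\partial\bar{\partial}\log(1-\sum_{i=d_{L}+1}^{n}|z_{i}|^{2})$ on $\mathbb{P}((V/L)^{\circ})$ with $\kappa_{L}>1$, together with the evident intermediate models on the reducible strata, whose normal fibers factor as products of irreducible ones. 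This exhibits each such stratum as totally geodesic in $\mathbb{P}(V^{\natural})$.

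Finally, for nontrivial $G$: as before $G$ is generated by the Schwarz rotation groups $G_{L}$, $L\in\mathscr{L}_{\mathrm{irr}}(\mathscr{H})$, each a finite subgroup of the relevant projective unitary group which fixes $L$ (resp.\ $V/L$) and acts on the normal direction by $p_{L}$-th roots of unity; the extended developing map is constant on $G$-orbits, so Stein-factorizing the induced map $\mathbb{P}(G\backslash(V^{\sharp}-\partial V^{\sharp}))\to\mathbb{B}$ yields $\mathbb{P}(G\backslash V^{\natural})$, and passing to the quotient affects neither the identification of the metric completion nor the total geodesy of the strata. The step I expect to be the main obstacle is precisely this completeness assertion: unlike in the parabolic and elliptic cases the space $\mathbb{P}(V^{\natural})$ is non-compact, so one genuinely needs the Borel--Serre bordification, together with the fact that its boundary is metrically at infinity, to rule out a Cauchy sequence escaping toward a cusp.
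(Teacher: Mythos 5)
Your proposal is correct and follows essentially the same route as the paper: Stein factorization of the extended developing map (via Proposition \ref{prop:hyp-compact-fiber} and Lemma \ref{lem:Stein-factorization}), pullback of the complex hyperbolic metric along the resulting local immersion up to $\Gamma$, substitution of the degenerate projective structure into the curvature model for total geodesy, and the Schwarz rotation groups for the $G$-quotient. The only cosmetic difference is at the cusps: the paper deduces completeness from the local model $[F_{0},\log t,F_{1}]$ (boundary components foliated by complex geodesics along which real geodesics converge exponentially), whereas you phrase the same underlying fact as the Borel--Serre boundary lying at infinite distance, so that a Cauchy sequence cannot escape toward it and must subconverge in the interior.
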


\begin{proof}
We still first look at the situation when $G$ is trivial. According to Proposition $\ref{prop:hyp-compact-fiber}$, 
the (multivalued) mapping $\mathbb{P}(V^{\sharp})\rightarrow \mathbb{B}^{\sharp}$ enjoys the property that 
the connected components of its fibers are compact and that the preimage of the Borel-Serre boundary is in the 
boundary of the domain. So we find that for the topological Stein factorization of 
$\mathbb{P}(V^{\sharp})\rightarrow \mathbb{B}^{\sharp}$,
\[
\mathbb{P}(V^{\sharp})\longrightarrow\mathbb{P}(V^{\sharp})_{\mathfrak{St}}\longrightarrow \mathbb{B}^{\sharp},
\]
we have a complex-analytic Stein factorization over $\mathbb{B}$ 
\[
\mathbb{P}(V^{\sharp}-\partial V^{\sharp})\longrightarrow\mathbb{P}(V^{\sharp}-\partial V^{\sharp})_{\mathfrak{St}}
\longrightarrow \mathbb{B}.
\]
We denote the Stein factor $\mathbb{P}(V^{\sharp}-\partial V^{\sharp})_{\mathfrak{St}}$ by $\mathbb{P}(V^{\natural})$.

This tells us that the extension across $E(L_{\bullet})^{\circ}$ (the stratum without boundary) 
by the projective developing map establishes a multivalued map 
from $\mathbb{P}(V^{\sharp}-\partial V^{\sharp})$ to $\mathbb{B}$. 
Observe that the $\Gamma$-invariant Hermitian form of hyperbolic type takes negative values 
on $A_{\mathbb{B}}$ so that $\mathbb{B}$ inherits a complex hyperbolic metric from $A_{\mathbb{B}}$. 
This extended map is certainly not injective near a point $z$ on $E(L_{\bullet})^{\circ}$ even up to the local monodromy 
group $\Gamma_{z}$, but we have the above complex-analytic Stein factorization 
for $\mathbb{P}(V^{\sharp}-\partial V^{\sharp})\rightarrow \mathbb{B}$ such that the differential of the derived map from 
the Stein factor $\mathbb{P}(V^{\natural})$ to $\mathbb{B}$ at $z$ is injective with the transition maps (between charts 
around $z$) lying in $\Gamma_{z}$, hence the resulting extended map is a local immersion up to $\Gamma$. 
This factorization is realized by a projection $E(L_{\bullet})^{\circ}\rightarrow \mathbb{P}((L_{r}/L_{r-1})^{\circ})$. 
This can be obtained by removing the boundary components $\partial_{L}\mathbb{P}(V^{\sharp})$ from $\mathbb{P}(V^{\sharp})$ 
if $\kappa_{L}=1$, 
contracting the transversal direction of $\mathbb{P}(L^{\sharp})$ for each 
$D(L)=\mathbb{P}(L^{\sharp})\times \mathbb{P}((V/L)^{\sharp})$ if $\kappa_{L}<1$, and contracting 
its own direction of $\mathbb{P}(L^{\sharp})$ for each $D(L)=\mathbb{P}(L^{\sharp})\times \mathbb{P}((V/L)^{\sharp})$ 
if $\kappa_{L}>1$. 

Then $\mathbb{P}(V^{\natural})$ is endowed with a (singular) complex hyperbolic metric pulled back from $\mathbb{B}$ 
since the resulting extended developing map is a local immersion up to $\Gamma$. 
It is clear that on $\mathbb{P}(V^{\circ})$ it is endowed with a smooth complex hyperbolic metric which coincides with 
the metric inherited from the given admissible Hermitian form of hyperbolic type on $V^{\circ}$. 
We further notice that this induced metric would not alter the original topology on $\mathbb{P}(V^{\natural})$, 
since the metric still varies over $\mathbb{P}(V^{\natural})$ continuously. 
We consider the following two cases. 1) If there is no $L$ such that $\kappa_{L}=1$, then $\mathbb{P}(V^{\natural})$ is compact. 
Since $\mathbb{P}(V^{\circ})$ is dense in $\mathbb{P}(V^{\natural})$, 
it implies that $\mathbb{P}(V^{\natural})$ is the metric completion of $\mathbb{P}(V^{\circ})$. 
2) If there happens to exist an $L$ whose $\kappa_{L}$ is $1$, 
then around the removed boundary component, it is easy to see from the projective structure $[F_{0},\log t, F_{1}]$ that 
each such boundary component is foliated by complex geodesics, locally isometric to a real hyperbolic plane, 
for which those real geodesics converge in an exponential manner, which shows that the metric is complete.  
This also implies that $\mathbb{P}(V^{\natural})$ is the metric completion of $\mathbb{P}(V^{\circ})$. 

The given admissible Hermitian form of hyperbolic type endows $\mathbb{P}(V^{\circ})$ with a K\"{a}hler metric of constant negative 
holomorphic sectional curvature, i.e., the complex hyperbolic metric. 
Since the Hermitian form is flat for the Dunkl connection and $\Gamma$-invariant, 
we still substitute the degenerate projective structure into the 
local smooth model $-\sqrt{-1}\partial\bar{\partial}\log (1-\sum_{i=1}^{n}|z_{i}|^{2})$, as in the elliptic case, 
we get the curvature models on strata of $\mathbb{P}(V^{\natural})$ as follows: 
$
-\sqrt{-1}\partial\bar{\partial}\log \big(1-(\sum_{i=1}^{d_{L}-1}|z_{i}|^{2})\big)
$
on $\mathbb{P}(L^{\circ})$ for which $\kappa_{L}<1$;
$
-\sqrt{-1}\partial\bar{\partial}\log \big(1-(\sum_{i=d_{L}+1}^{n}|z_{i}|^{2})\big)
$
on $\mathbb{P}((V/L)^{\circ})$ for which $\kappa_{L}>1$; and 
$
-\sqrt{-1}\partial\bar{\partial}\log \big(1-(\sum_{i=d_{L'}+1}^{d_{L}-1}|z_{i}|^{2})\big)
$
on $\mathbb{P}((L/L')^{\circ})$ for which $L'\subset L$ with $\kappa_{L'}>1>\kappa_{L}$, although the stratum $\mathbb{P}((L/L')^{\circ})$ 
is not irreducible in the sense that its transversal direction in $\mathbb{P}(V^{\natural})$ can be decomposed as a product. 
This shows that all the above strata are totally geodesic in $\mathbb{P}(V^{\natural})$.

When $G$ is not trivial, it is a finite group of projective unitary group of the Hermitian form of hyperbolic type 
generated by the Schwarz rotation group $G_{L}$. 
The group $G_{L}$ is a finite subgroup of the projective unitary group of hyperbolic type 
fixing $L$ (resp. $V/L$) and acting on $L^{\perp}$ (resp. $(V/L)^{\perp}$) as scalar multiplication by $|G_{L}|$th 
roots of unity when $\kappa_{L}<1$ (resp. $\kappa_{L}>1$) 
such that the Dunkl system is invariant under the action of $G_{L}$. 
We also know that the extended developing map is a constant on the $G$-orbits.
Therefore, the action of $G$ would not affect the metric completion of $\mathbb{P}(V^{\circ})$ or that the newly obtained stratum 
being totally geodesic in $\mathbb{P}(V^{\natural})$.

The proof of the proposition is now complete.
\end{proof}

\begin{proof}[Proof of Theorem $\ref{thm:hyperbolic}$]
It remains to show that the metric on $\mathbb{P}(G\backslash V^{\natural})$ is of a conical type and the finiteness of its volume. 

We still first deal with the situation when $G$ is trivial. 
For any point $z$ on $\mathbb{P}(V^{\natural})$, its unit tangent cone $S_{z}(\mathbb{P}(V^{\natural}))$ can be obtained in the 
same inductive way as in the elliptic case, which is a spherical cone-manifold whose strata in singular loci are of real odd dimension. 
So the complex hyperbolic metric cone $C_{g}(S_{z}(\mathbb{P}(V^{\natural})))$ over the unit tangent cone gives a local metric model 
for a neighborhood of $z$ such that the cones over those strata in singular loci in $S_{z}(\mathbb{P}(V^{\natural}))$ are totally geodesic 
in $C_{g}(S_{z}(\mathbb{P}(V^{\natural})))$. Therefore, the space $\mathbb{P}(V^{\natural})$ is a complex hyperbolic 
(i.e., $(\mathbb{CH}^{n},\mathrm{PU}(n,1))-$) cone-manifold.

If there is no $L$ with $\kappa_{L}=1$, the metric completion $\mathbb{P}(V^{\natural})$ is compact and hence the finiteness of the 
volume is automatic. When there exists an $L$ such that $\kappa_{L}=1$, it is easy to see 
from the projective structure $[F_{0},\log t, F_{1}]$ that each boundary component is foliated by complex geodesics with respect to the 
hyperbolic metric, and also that each complex geodesic is locally isometric to a real hyperbolic plane 
for which those real geodesics converge in an exponential manner, so the volume around each boundary component is finite. 
Therefore, $\mathbb{P}(V^{\natural})$ is a hyperbolic cone-manifold of finite volume.

Now we assume $G$ is not trivial, for $L\in \mathscr{L}_{\mathrm{irr}}(\mathscr{H})-\{0\}$ write $p_{L}:=|G_{L}|$ for which $G_{L}$ 
is the Schwarz rotation group fixing $L$ or $V/L$ depending on $\kappa_{L}<1$ or $\kappa_{L}>1$ as in the elliptic case. 
We note that the developing map near a point $z$ of $E(L)^{\circ}$ 
factors through $G_{L}\backslash V^{\sharp}_{z}$. 
Then we can also apply the inductive argument to 
$G\backslash L$ if $\kappa_{L}>1$, or $G\backslash (V/L)$ if $\kappa_{L}<1$, to get the conical structure along the stratum 
$\mathbb{P}(G\backslash (V/L)^{\circ})$ or $\mathbb{P}(G\backslash L^{\circ})$.

If some stratum in $\mathbb{P}(V^{\natural})$ has some rotation type group induced by $G$ while $G$ acts on its preimage in 
$V^{\natural}$ trivially, 
the stratum can be treated in the same way as in the elliptic case so that the conical structure remains along that stratum. 
Therefore, the conclusion follows as well by applying the same inductive argument to its normal fiber.

This completes the proof.
\end{proof}

\subsection{Some quantities}\label{subsec:link}

Next, we discuss some quantities associated to a stratum in the metric completion $\mathbb{P}(G\backslash V^{\natural})$. The discussion 
below holds for all three cases. We let $Q$ denote 
an irreducible stratum of codimension $q$, namely, it is of the form either $\mathbb{P}(L^{\circ})$ or $\mathbb{P}((V/L)^{\circ})$ 
for which we exclude the stratum of the form $\mathbb{P}((L/L')^{\circ})$ since its normal fiber is 
decomposed as $\mathbb{P}(L')\times \mathbb{P}(V/L)$. Then the normal slice to $Q$ at a point $z\in Q$ is a union of 
`complex rays' each of which is swept out by a real ray along a circle. The space of complex rays is the \emph{complex link} of 
the stratum, which is of a complex spherical cone-manifold structure of dimension $q-1$. Then a Seifert fiber space over the 
complex link with generic fiber a circle of length $\gamma(Q)$ is the real link of that stratum. We call the length $\gamma(Q)$ 
the \emph{scalar cone angle} at $Q$. It is clear that $\gamma(Q)=2\pi (1-\kappa_{L})$ if $Q=\mathbb{P}(L^{\circ})$, or 
$\gamma(Q)=2\pi (\kappa_{L}-1)$ if $Q=\mathbb{P}((V/L)^{\circ})$.

We define the \emph{complex link fraction} to be the ratio of the volume of the complex link to the volume of $\mathbb{P}^{q-1}$ 
(the complex link in the non-singular case), 
and likewise the \emph{real link fraction} to be the ratio of the volume of the real link to the volume of $S^{2q-1}$
(the real link in the non-singular case).

The complex resp. real link fraction of a given irreducible stratum in our situation 
is given as follows.

\begin{proposition}
Let $Q$ be an irreducible stratum in the metric completion $\mathbb{P}(G\backslash V^{\natural})$. 
Write $N_{Q}$ as the order of the corresponding 
Schwarz symmetry group of its transversal Dunkl system.

Then the complex link fraction is 
\[
\frac{(\gamma(Q)/2\pi)^{q-1}}{N_{Q}},
\]
and the real link fraction is 
\[
\frac{(\gamma(Q)/2\pi)^{q}}{N_{Q}}.
\]
\end{proposition}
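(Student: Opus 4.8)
\emph{Strategy and the real-versus-complex reduction.} The plan is to prove the formula for the complex link fraction and deduce the one for the real link fraction from it. From the local analysis of the cone metric near $Q$ carried out in the three cases above, a normal slice to $Q$ at $z\in Q$ is the germ of a metric cone over the real link $N_zM$, and $N_zM$ is a Seifert $S^1$-fibration over the complex link whose generic fibre is a closed geodesic of length $\gamma(Q)$; off the singular locus this is a Riemannian submersion with totally geodesic fibres, so $\mathrm{vol}(N_zM)=\gamma(Q)\cdot\mathrm{vol}(\text{complex link})$. Combined with the standard identity $\mathrm{vol}(S^{2q-1})=2\pi\,\mathrm{vol}(\mathbb{P}^{q-1})$ this gives $(\text{real link fraction})=\tfrac{\gamma(Q)}{2\pi}\cdot(\text{complex link fraction})$, so the two assertions are equivalent and only the complex link fraction has to be computed.

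\emph{The complex link as an auxiliary completion.} By the inductive construction of the cone structure, and as noted in the remark following Theorem \ref{thm:parabolic}, the complex link of $Q=\mathbb{P}(L^{\circ})$ (resp.\ $Q=\mathbb{P}((V/L)^{\circ})$) is the metric completion $\mathbb{P}(G_Q\backslash W^{\natural})$ of the $L$-transversal Dunkl system $W=V/L$ (resp.\ the $L$-longitudinal system $W=L$); this is an irreducible elliptic Dunkl system with $p:=\dim_{\mathbb{C}}W=q$, with Euler-field dilatation $|1-\kappa_L|=\gamma(Q)/2\pi$, and with Schwarz symmetry group $G_Q$ of order $N_Q$. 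Hence it suffices to prove: for an irreducible elliptic Dunkl system $(W,\mathscr{H}',\kappa')$ with $p=\dim_{\mathbb{C}}W$, $\kappa'_0\neq 1$, and Schwarz symmetry group of order $N$, one has $\mathrm{vol}\bigl(\mathbb{P}(G\backslash W^{\natural})\bigr)=\tfrac{|1-\kappa'_0|^{\,p-1}}{N}\,\mathrm{vol}(\mathbb{P}^{p-1})$.

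\emph{The induction.} I would argue by induction on $p$. For $p=1$ the completion is a point with orbifold weight $1/N$, so the claim is immediate; for $p=2$ the completion is $\mathbb{P}^{1}$ with cone points of angle $2\pi(1-\kappa_H)$, and Gauss--Bonnet together with the trace identity $\kappa'_0=\tfrac12\sum_{H\in\mathscr{H}'}\kappa_H$ gives area $1-\kappa'_0$, hence fraction $(1-\kappa'_0)/N$. For the inductive step, pass to the big resolution $\mathbb{P}(W^{\sharp})$, where $\mathrm{vol}\bigl(\mathbb{P}(W^{\natural}),g\bigr)=\tfrac{1}{(p-1)!}\int_{\mathbb{P}(W^{\sharp})}\theta^{p-1}$ for $\theta$ the closed positive current pulled back from the Fubini--Study form on $\mathbb{P}(A)$ by the extended projective developing map. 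By the explicit normal forms $(F_0,t^{1-\kappa_L},t^{1-\kappa_L}F_1)$ for the developing map, $\theta$ has continuous local potentials away from the exceptional strata; its cohomology class is pinned down by restricting to a generic line (again a cone $\mathbb{P}^{1}$, handled by Gauss--Bonnet), while the Monge--Amp\`ere mass that $\theta^{p-1}$ loses along each exceptional divisor $D(L')\cong\mathbb{P}({L'}^{\sharp})\times\mathbb{P}((W/L')^{\sharp})$ is, after restriction to $D(L')$, a sum of products of intersection numbers on the two factors, which by the induction hypothesis are precisely the normalized volumes of the metric completions of the $L'$-longitudinal and $L'$-transversal subsystems.

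\emph{Main difficulty.} The crux is the combinatorial collapse in the inductive step: one must show that these corrections — controlled by the logarithmic exponents $\kappa_{L'}-1$, the inclusion-reversing monotonicity $\kappa_{L''}<\kappa_{L'}$ for $L'\subsetneq L''$, and the orders $p_{L'}$ of the Schwarz rotation groups — telescope along the flags $L_{\bullet}$ into the single monomial $|1-\kappa'_0|^{p-1}/N$. The indispensable input should again be the trace relation $\kappa_{L'}=\mathrm{codim}(L')^{-1}\sum_{H\supset L'}\kappa_H$, which is exactly what makes the $p=2$ case work; organizing the higher-codimensional bookkeeping around it, and verifying the Lelong-number estimates that legitimize the Monge--Amp\`ere mass computation, is the part demanding real care.
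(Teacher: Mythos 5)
Your skeleton matches the paper's at several points: the reduction of the real link fraction to the complex one via the Seifert fibration and $\mathrm{vol}(S^{2q-1})=2\pi\,\mathrm{vol}(\mathbb{P}^{q-1})$, the identification of the complex link with the projective completion of the transversal (resp.\ longitudinal) Dunkl system, the $q=2$ base case via Gauss--Bonnet and the trace identity, and the final division by $N_{Q}$. The gap is the inductive step, which you yourself label the ``main difficulty'' and do not carry out: you never prove that the Monge--Amp\`ere mass corrections along the exceptional divisors of $\mathbb{P}(W^{\sharp})$ telescope into the single monomial $|1-\kappa'_{0}|^{p-1}$. As written, the argument is incomplete exactly at the step that produces the exponent $q-1$.

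Moreover, the route you chose manufactures this difficulty. The paper never passes to the big resolution for this computation: it works directly on the complex link, which is a cone-manifold structure on $\mathbb{P}^{q-1}$ itself, so $H^{2}(\mathbb{P}^{q-1};\mathbb{R})\cong\mathbb{R}$ and the K\"ahler current $\omega$ of the cone metric satisfies $[\omega]=c\,[\omega_{FS}]$ for a single constant $c$ --- there are no exceptional classes and hence nothing to telescope. Since the local potentials of $\omega$ are continuous (the models are of the form $|z|^{2\beta}$ with $\beta>0$; this is the paper's remark that ``the differential forms associated to the cone metrics are suitably continuous''), the Bedford--Taylor product $\omega^{q-1}$ charges no mass to the singular strata, so the volume equals $\int_{\mathbb{P}^{q-1}}\omega^{q-1}=c^{\,q-1}\int_{\mathbb{P}^{q-1}}\omega_{FS}^{q-1}$, and the constant $c=\gamma(Q)/2\pi$ is read off from precisely your $q=2$ Gauss--Bonnet computation applied to a generating $2$-cycle. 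Working upstairs on $\mathbb{P}(W^{\sharp})$ forces you to track the coefficient of $[\theta]$ on every $[D(L')]$, which is exactly the bookkeeping you could not close; pushing the computation down to $\mathbb{P}^{q-1}$ makes it vanish. If you replace your inductive step by this one-dimensional cohomology argument, the rest of your write-up goes through.
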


\begin{proof}
We first consider the situation when $G$ is trivial. The complex link is of a cone-manifold structure on $\mathbb{P}^{q-1}$. 
If $\omega$ is a closed $2$-form on $\mathbb{P}^{q-1}$ such that it integrates to $1$ over $\mathbb{P}^{1}$, then 
$\omega^{q-1}$ gives the fundamental class for $\mathbb{P}^{q-1}$. Then for our cone-manifold structure we can readily 
obtain a volume form which is a constant multiple of the K\"{a}hler form of the non-singular standard $\mathbb{P}^{q-1}$. 
This calculus works since the differential forms associated to the cone metrics are suitably continuous.

We can determine the constant multiple by reducing the case to $q=2$. In the case $q=2$, the complex link is just $S^{2}$ 
with the cone points contributing the curvature $4\pi-2\gamma(Q)$. Since the total curvature of $S^{2}$ is $4\pi$, the 
area of the constant curvature metric is $2\gamma(Q)$. Hence its ratio to the volume of $S^{2}$ is $\gamma(Q)/2\pi$. 
So the complex link fraction is just $(\gamma(Q)/2\pi)^{q-1}$ for a general $q$.

The real link fraction is simply the product of the complex link fraction with $\gamma(Q)/2\pi$.

If $G$ is not trivial, the results follow immediately by dividing the symmetry.
\end{proof}

\section{Examples}\label{sec:examples}

In this section, we revisit the Deligne-Mostow theory (also studied by Thurston from a different angle) 
within the framework of the Dunkl system.

More specifically, the Deligne-Mostow theory \cite{Deligne-Mostow}\cite{Mostow} studies the monodromy problem of the 
Lauricella hypergeometric 
system, which leads to the discovery of ball quotient structures on $\mathbb{P}^{n}$ relative to a hyperplane configuration of 
type $A_{n+1}$. 
And later on, Thurston \cite{Thurston} reinterpreted the theory as the moduli of cone metrics on the 
sphere with a set of given curvatures on cone points, so that the metric completion of the moduli space acquires a cone-manifold 
structure (an orbifold appears to be a special kind of cone-manifold). 
This theory now turns out to be a special case in the set-up of the Dunkl system.

We start with an $(n+1)$-dimensional vector space defined by $V:=\mathbb{C}^{n+2}/\text{main diagonal}$, 
where $\text{main diagonal}$ is the line spanned by $(1,\dots,1)$. 
Let $\mathscr{H}$ be the collection of diagonal hyperplanes $H_{ij}\subset V$ (or in $\mathbb{C}^{n+2}$) defined by $z_{i}=z_{j}$ and
$\omega_{ij}:=(z_{i}-z_{j})^{-1}d(z_{i}-z_{j})$ the associated logarithmic form. 
So we can think of the arrangement complement $V^{\circ}:=V-\cup_{i<j}H_{ij}$ as the configuration space of 
$n+2$ ordered distinct points in $\mathbb{C}$, given up to translation.

Let be given a sequence of real numbers $\mu:=(\mu_{0},\dots,\mu_{n+1})$ such that each component $\mu_{i}\in (0,1)$, and write their sum as 
$|\mu|:=\sum_{i=0}^{n+1}\mu_{i}$. Denote the standard basis of $\mathbb{C}^{n+2}$ by $\varepsilon_{0},\dots,\varepsilon_{n+1}$. 
The inner product on $V$ comes from
the inner product on $\mathbb{C}^{n+2}$ defined by $(\varepsilon_{i},\varepsilon_{j})=\mu_{i}\delta_{j}^{i}$. In fact, we
could identify $V$ with the orthogonal complement of the main diagonal which is the hyperplane defined by
$\sum_{i}\mu_{i}z_{i}=0$. The line orthogonal to the hyperplane $H_{ij}$ is spanned by the vector
$\mu_{j}\varepsilon_{i}-\mu_{i}\varepsilon_{j}$. 
Now we define the endomorphism $\tilde{\rho}_{ij}:\mathbb{C}^{n+2}\rightarrow\mathbb{C}^{n+2};
z\mapsto (z_{i}-z_{j})(\mu_{j}\varepsilon_{i}-\mu_{i}\varepsilon_{j})$ which is selfadjoint with kernel $H_{ij}$. And 
a straightforward computation, 
$\tilde{\rho}_{ij}(\mu_{j}\varepsilon_{i}-\mu_{i}\varepsilon_{j})=(\mu_{i}+\mu_{j})(\mu_{j}\varepsilon_{i}-\mu_{i}\varepsilon_{j})$,
shows that $\kappa_{H_{ij}}=\mu_{i}+\mu_{j}$. In particular, $\tilde{\rho}_{ij}$ induces an endomorphism $\rho_{ij}$ in
$V$. 

One can verify that the connection defined by
\[ \nabla:=\nabla^{0}-\sum_{i<j}\omega_{ij}\otimes\rho_{ij}   \]
is flat so that the above $(V,\mathscr{H},\kappa)$ becomes a Dunkl system, and has the Euler field as a dilatation field 
with factor $1-|\mu|$.

Define the \emph{Lauricella differential} of weight $\mu$ as follows
\[
\eta_{z}:=(z_{0}-\xi)^{-\mu_{0}}\cdots (z_{n+1}-\xi)^{-\mu_{n+1}}d\xi,
\]
with $z=(z_{0},\dots,z_{n+1})\in(\mathbb{C}^{n+2})^{\circ}$. 
If a determination of $\eta_{z}$ is chosen and $\gamma$ is an arc in $\mathbb{C}$ connecting $z_{i}$ to $z_{j}$ which nevertheless 
does not meet any other point in $\{z_{0},\dots,z_{n+1}\}$, then the following integral 
\[
F(z,\gamma):=\int_{\gamma}\eta_{z}
\]
is translation invariant and thus defines a multivalued holomorphic function on $V^{\circ}$. 
All the functions of this form give rise to a new affine structure on $V^{\circ}$ 
for their differentials are flat with respect to the above connection. This also shows that together with constant functions, they 
form a solution space 
for a system of second-order differential equations which we call the \emph{Lauricella hypergeometric equations}. 
It is easy to verify that these functions are homogeneous of degree $1-|\mu|$.

Let $\delta:=(\delta_{1},\dots,\delta_{n+2})$ be a system of arcs for which $\delta_{k}$ denotes the piece connecting $z_{k-1}$ 
with $z_{k}$. We note that the arc $\delta_{n+2}$ connecting $z_{n+1}$ to $z_{n+2}:=\infty$ is required to follow the real axis in 
the positive direction when approaching $\infty$. 
The germs of these functions at $z\in V^{\circ}$ form a vector space of rank $n+1$ and we define functions as follows 
\begin{align*}
F_{k}(z,\delta)&:=\int_{\delta_{k}}(\xi-z_{0})^{-\mu_{0}}\cdots (\xi-z_{k-1})^{-\mu_{k-1}}(z_{k}-\xi)^{-\mu_{k}}\cdots 
(z_{n+1}-\xi)^{-\mu_{n+1}}d\xi \\
&=\bar{w}_{k}\int_{\delta_{k}}\eta_{z},
\end{align*}
for $k=1,\dots,n+2$, where $w_{k}:=\exp(\sqrt{-1}\pi(\mu_{0}+\cdots +\mu_{k-1}))$. 
We then have that these functions satisfy a linear relation $\sum_{k=1}^{n+2}\mathrm{Im}(w_{k})F_{k}=0$. 
This basis certainly defines a developing map for the new affine structure on $V^{\circ}$. 
The reason for us to choose this basis (from $F_{1}$ to $F_{n+1}$) is that in terms of this basis 
we are able to write out the required invariant Hermitian form in an explicit way.

Now let us find the Hermitian form on the target space $A$ which is identified with $\mathbb{C}^{n+1}$. 
We note that the required Hermitian form $H$ on the target space is characterized by 
\[
H(F(z,\delta),F(z,\delta))=N(z):=-\frac{\sqrt{-1}}{2}\int_{\mathbb{C}}\eta_{z}\wedge \bar{\eta}_{z}.
\]
According to this, with some effort we can write out a form on $\mathbb{C}^{n+2}$, denoted by 
$\tilde{H}$, as follows
\[
\tilde{H}(F,F')=\sum_{1\leq j<k \leq n+2}\mathrm{Im}(w_{j}\bar{w}_{k})F_{k}\bar{F}'_{j}.
\]
We denote by $\tilde{A}$ the hyperplane in $\mathbb{C}^{n+2}$ defined by $\sum_{k=1}^{n+2}\mathrm{Im}(w_{k})F_{k}=0$. 
When $|\mu|\notin \mathbb{Z}$, we can identify $\mathbb{C}^{n+1}$ with the projection $\tilde{A}\subset\mathbb{C}^{n+2}
\rightarrow \mathbb{C}^{n+1}$ by forgetting the last coordinate (this can be done because $\mathrm{Im}(w_{n+2})=\sin(|\mu|\pi)\neq 0$). 
So the restriction of $\tilde{H}$ on $\tilde{A}$ gives us the Hermitian form $H$ on $\mathbb{C}^{n+1}$. 
When $|\mu|\in \mathbb{Z}$, the projection $\tilde{A}\subset\mathbb{C}^{n+2}\rightarrow \mathbb{C}^{n+1}$ defines a 
hyperplane $A$ in $\mathbb{C}^{n+1}$ (given by $\sum_{k=1}^{n+1}\mathrm{Im}(w_{k})F_{k}=0$) since $\mathrm{Im}(w_{n+2})=0$. 
So the restriction of $\tilde{H}$ on $\tilde{A}$ induces a Hermitian form on $A$, 
which can also be viewed as being induced from a degenerate Hermitian form $H$ on $\mathbb{C}^{n+1}$. 
In either case the form $H$ is $\Gamma$-invariant.

We can also compute that $\kappa_{0}=|\mu|$; and if an irreducible member $L\in\mathscr{L}_{\mathrm{irr}}(\mathscr{H})$ is given by
a subset $I\subset\{0,\dots,n+1\}$ with at least two elements, i.e., $L=L(I)$ is the locus where all $z_{j},j\in I$ coincide, then
$\kappa_{L}=\sum_{j\in I}\mu_{j}$. We denote by $\mathfrak{S}_{\mu}$ the permutation group preserving the weight system $\mu$, 
which plays a role as the Schwarz symmetry group $G$ in the Dunkl system.

Now this example well falls into the setting of the Dunkl system that we discuss in the preceding sections. Then we have 

\begin{theorem}
Let $\mu$ be given as above. Then there is a flat Hermitian form $h$ on the tangent bundle of $V^{\circ}$ as given by $H$ 
on the target space realized by the developing map (it can be done since $H$ is $\Gamma$-invariant).
\begin{enumerate}[label=(\roman*)]
	\item \emph{(elliptic case)} If $|\mu|<1$, then the Hermitian form is positive definite which endows 
	$\mathfrak{S}_{\mu}\backslash\mathbb{P}(V^{\circ})$ 
	with a K\"{a}hler metric locally isometric to a Fubini-Study metric. Its metric completion acquires the structure of 
	an elliptic cone-manifold.
	
	\item \emph{(parabolic case)} If $|\mu|=1$, then the Hermitian form is positive semidefinite which endows 
	$\mathfrak{S}_{\mu}\backslash\mathbb{P}(V^{\circ})$ 
	with a flat K\"{a}hler metric. Its metric completion acquires the structure of 
	a parabolic cone-manifold.
	
	\item \emph{(hyperbolic case)} If $1<|\mu|<2$, then the Hermitian form is admissible and of hyperbolic type such that $h(z,z)<0$. Hence 
	this Hermitian form endows $\mathfrak{S}_{\mu}\backslash\mathbb{P}(V^{\circ})$ with a K\"{a}hler metric locally isometric to 
	a complex hyperbolic metric. 
	Its metric completion acquires the structure of a complex hyperbolic cone-manifold of finite volume.
\end{enumerate}
\end{theorem}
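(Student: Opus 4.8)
The plan is to verify that the Lauricella data just set up satisfies the hypotheses of Theorems \ref{thm:elliptic}, \ref{thm:parabolic} and \ref{thm:hyperbolic}, and then to read off the three conclusions from those theorems. Most of the work is already in place: $(V,\mathscr{H},\kappa)$ is a Dunkl system with $\kappa_{0}=|\mu|$, $\kappa_{H_{ij}}=\mu_{i}+\mu_{j}$ and $\kappa_{L(I)}=\sum_{j\in I}\mu_{j}$, the Lauricella functions $F_{1},\dots,F_{n+1}$ provide a developing map, and the form $\tilde{H}$ restricted to $\tilde{A}$ induces the $\Gamma$-invariant Hermitian form $H$ on the target; its symmetry group is $\mathfrak{S}_{\mu}$, playing the role of $G$. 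The preamble of the statement is then immediate: since the developing map is $\Gamma$-equivariant and affine-linear and $H$ is $\Gamma$-invariant and parallel on the target, the pullback $\mathrm{ev}^{*}H$ descends to a flat Hermitian form $h$ on the tangent bundle of $V^{\circ}$.

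The substantive point is the signature of $h$ (equivalently of $H$) as $|\mu|$ varies. I would argue this in two complementary ways. First, writing $\mu=s\mu'$ with $|\mu'|=1$ and $s=|\mu|$, one has $\mu'_{i}=\mu_{i}/s\in(0,1)$, the segment $t\mapsto t\mu'$ stays in the admissible region of weights, and $h$ is exactly the family $h^{s}$ of Theorem \ref{thm:hermitian-forms} for the base system of weight $\mu'$; hence $h>0$ for $s<1$, $h\geq 0$ with kernel spanned by the Euler field at $s=1$, and $h$ of hyperbolic signature on $(1,m_{\mathrm{hyp}})$. Second, for $|\mu|>1$ the normalizing integral $N(z)=-\tfrac{\sqrt{-1}}{2}\int_{\mathbb{C}}\eta_{z}\wedge\bar{\eta}_{z}$ converges (the only issue, at $\xi=\infty$, being governed by $|\mu|>1$) and is strictly negative, so $H(\mathrm{ev}(z),\mathrm{ev}(z))=N(z)<0$ for all $z\in V^{\circ}$; this says precisely that the developing map lands in $A_{\mathbb{B}}$, i.e.\ that $h$ is admissible, and it shows $h(z,z)<0$ on the Euler direction. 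Together with the classical Deligne--Mostow computation identifying $\tilde{H}|_{\tilde A}$ with the intersection form of the Lauricella local system, this gives Lorentzian signature $(n,1)$ throughout $1<|\mu|<2$, so $m_{\mathrm{hyp}}\geq 2$ and the hyperbolic regime covers the whole interval.

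It then remains to check the remaining, purely combinatorial, hypotheses. For $|\mu|<1$ every $\kappa_{L(I)}=\sum_{j\in I}\mu_{j}\leq|\mu|<1$, so no irreducible $L$ has $\kappa_{L}=1$ and none has $\kappa_{L}>1$; the extra conditions of Theorem \ref{thm:elliptic} hold vacuously, and since $\kappa_{0}<1$ this is in fact the subcase whose metric completion is $\mathbb{P}(\mathfrak{S}_{\mu}\backslash V)$. For $|\mu|=1$ one needs $\kappa_{H_{ij}}\in(0,1)$: since $\mu_{i}+\mu_{j}=1-\sum_{k\neq i,j}\mu_{k}$ and (as $n\geq 1$) there is an index distinct from $i$ and $j$, this holds, while $\kappa_{0}=1$ and $h$ is positive semidefinite, so Theorem \ref{thm:parabolic} applies. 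For $1<|\mu|<2$ one only needs $\kappa_{H_{ij}}=\mu_{i}+\mu_{j}\in(0,2)\subset(0,+\infty)$ together with a flat admissible Hermitian form of hyperbolic type, both established above, so Theorem \ref{thm:hyperbolic} applies and yields a finite-volume hyperbolic cone-manifold. The three assertions of the statement are then exactly the conclusions of Theorems \ref{thm:elliptic}, \ref{thm:parabolic} and \ref{thm:hyperbolic} in these cases.

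The main obstacle I anticipate is not the combinatorics but this signature/admissibility step: confirming that $H$ passes through exactly the positive-definite, corank-one semidefinite, and Lorentzian regimes and stays admissible on all of $1<|\mu|<2$. The scaling reduction to Theorem \ref{thm:hermitian-forms} only controls the qualitative behaviour up to $m_{\mathrm{hyp}}$; establishing $N(z)<0$ for $|\mu|>1$ (hence admissibility even when some $\mu_{i}+\mu_{j}\geq 1$, in which case the diagonal $\{z_{i}=z_{j}\}$ sits at infinite distance or becomes a cusp and is absorbed into $V^{\sharp}$ by the real-oriented blow-up of Theorem \ref{thm:hyperbolic}) and pinning the signature to $(n,1)$ on the whole interval is where the explicit Lauricella integral and its identification with the Deligne--Mostow form genuinely do the work.
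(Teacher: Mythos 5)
Your proposal is correct and follows exactly the route the paper intends: the paper in fact states this theorem without any written proof, leaving it as a direct application of Theorems \ref{thm:elliptic}, \ref{thm:parabolic} and \ref{thm:hyperbolic} once the Lauricella data are matched to the Dunkl-system hypotheses, which is precisely what you do. Your supplementary signature/admissibility analysis (the scaling reduction to Theorem \ref{thm:hermitian-forms} together with the convergence and negativity of $N(z)$ for $|\mu|>1$) supplies details the paper omits, and is consistent with the classical Deligne--Mostow computation the paper implicitly relies on.
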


\bibliographystyle{alpha}
\bibliography{bibfile}
\end{document}